\newcommand{\trapici}[1]{``{#1}''}
\newcommand{\ie}{\textit{i.e.~}}
\newcommand{\norm}[1]{{\Vert {#1} \Vert}}
\newcommand{\Order}[1]{\mathcal{O}\left( #1 \right)}
\newcommand{\R}{\mathbb{R}}
\newcommand{\mesh}{\mathcal{M}}
\newcommand{\edges}{\mathcal{E}}
\newcommand{\intEdges}{\edges_{\rm int}}
\newcommand{\extEdges}{\edges_{\rm ext}}
\newcommand{\points}{\mathcal{P}}
\newcommand{\abs}[1]{| {#1} |}
\newcommand{\vect}[1]{\bm{{#1}}}
\newcommand{\normal}{\vect{n}}
\newcommand{\family}{{\mathcal{F}}}
\newcommand{\disc}{{\mathcal{D}}}
\newcommand{\bForm}[2]{A{\left( {#1}, {#2} \right)}}
\newcommand{\bFormDisc}[2]{A_{\disc}{\left( {#1}, {#2} \right)}}
\newcommand{\dNabla}{\nabla_{\disc}}
\newtheorem{remark}{Remark}
\newtheorem{hypotheses}{Hypotheses}
\newcommand{\defeq}{\vcentcolon=}
\newcommand{\eig}{\mathop{\rm eig}}
\newcommand{\Hdiv}{{\bm{H}}_{\rm div}}
\newcommand{\eye}{\vect{I}}
\newcommand{\ttau}{\vect{\tau}}
\newcommand{\n}{\vect{n}}
\newcommand{\dual}{\bm{u}}
\newcommand{\NN}{\vect{N}}
\newcommand{\TT}{\vect{T}}
\newcommand{\jjj}{\jump{\dual \cdot \n}{\gamma_j}}
\newcommand{\dualv}{\hat{u}_{\n}}
\newcommand{\V}{{\mathcal V }}
\newcommand{\Vhat}{\hat{\V}}
\newcommand{\X}{\V_{\disc}}
\newcommand{\Xz}{\V_{\disc,0}}
\newcommand{\Xhat}{\Vhat_{\disc}}
\newcommand{\Xhatz}{\Vhat_{\disc,0}}
\newcommand{\Y}{{\mathcal Y}_{\disc}}
\newcommand{\Yz}{{\mathcal Y}_{\disc,0}}
\newcommand{\mean}[2]{\left\{\!\!\left\{ {#1} \right\}\!\!\right\}_{#2}}
\newcommand{\jump}[2]{\left\llbracket {#1} \right\rrbracket_{#2}}
\newcommand{\fSet}{T}
\newcommand{\gamhat}{{\hat{\gamma}}}
\newcommand{\gamhati}{\gamhat_1}
\newcommand{\gamhatii}{\gamhat_2}
\newcommand{\gamhatiii}{\gamhat_{1,2}}
\newcommand{\gamhatj}{\gamhat_j}
\newcommand{\phat}{\hat{p}}
\newcommand{\phati}{\phat_1}
\newcommand{\phatii}{\phat_2}
\newcommand{\phatj}{\phat_j}
\newcommand{\dualhat}{\hat{\dual}}
\newcommand{\dualhatj}{\dualhat_j}
\newcommand{\qhat}{\hat{q}}
\newcommand{\qhatj}{\qhat_j}
\newcommand{\ccform}[2]{cc\big({#1}, {#2}\big)}
\newcommand{\aform}[2]{a\big({#1}, {#2}\big)}
\newcommand{\avform}[2]{av\big({#1}, {#2}\big)}
\newcommand{\jform}[2]{j\big({#1}, {#2}\big)}
\title
{
    A double-layer reduced model for fault flow on slipping domains
    with hybrid finite volume scheme
}
\author
{
    Alessio Fumagalli
    \and
    Isabelle Faille
}
\begin{document}


\maketitle


\begin{abstract}
    In this work we are interested in dealing with single-phase flows in
    fractured porous media for underground processes. We focus our attention on
    domains where the presence of faults, with thickness several orders of
    magnitude smaller than other characteristic sizes, can allow one part of the
    domain to slide past to the other. We propose a mathematical scheme
    where a reduced model for the fault flows is employed yielding a problem of
    co-dimension one. The hybrid finite volume method is used to obtain the
    discretized problem, which employs two different meshes for each fault, one
    associated with the porous-medium domain on each side of the fault. These
    two meshes can move with the corresponding domain, resulting in non-matching
    grids between the two parts of the fault. In an earlier paper a mathematical
    scheme was proposed where the numerical discretization considers the hybrid
    finite volume method. In this paper we focus on the well-posedness of the
    continuous problem, the convergence of the discretized problem, and with several
    numerical tests we support the theoretical findings.
\end{abstract}

\begin{keywords}
    Porous media, reduced model, faults, finite volume, non-matching grids
\end{keywords}

\begin{AMS}
    76S05, 65N08, 86A60
\end{AMS}


\pagestyle{myheadings}
\thispagestyle{plain}
\markboth{TEX PRODUCTION AND V. A. U. THORS}{SIAM MACRO EXAMPLES}



\section{Introduction}


Subsurface multi-phase flows in porous medium are strongly influenced by the
presence of heterogeneities and in particular by the effect of faults, in which
the flow can move differently in the surrounding medium both across and along
the fault.
Depending on the geophysical data, in particular the permeability,
the faults can act as barriers or preferential paths for the flow.
This behaviour is due to several factors: further fracturation
of the fault zones, chemical reactions or generation at different geological
times. The effect of the faults is extremely important for several applications,
like fractured aquifers, $CO_2$ injection and sequestration or oil and gas
reservoirs exploitation. See \cite{bear1993flow,Karimi-Fard2004,Gong2011}
for applications in real a context.

One of the most important aspects of faults is the difference between their
characteristic sizes. We call the fault aperture the portion of rock containing
the fault core
and the surrounding damaged zone. Its typical thickness ranges from meters to a few
tens of meters, while its length is generally of the same order as the size of
the domain of interest. Normally the latter has extension of hundreds of kilo-meters
with depth of tens of kilo-meters. Considering a conforming discretization of a
real sedimentary basin with several faults, a standard numerical approximation
can easily make the simulation unaffordable.
Even if the literature on flows in fractured porous media is
extensively developed, see for example
\cite{adler1999fractures,Berkowitz2002,adler2012fractured}, a general method is
not yet available which can handle all the difficulties of this particular
problem.

We focus our attention on the family of mathematical models which replace the
fine description of the fault with an approximate one. The main idea of these
models is to substitute the $N$-dimensional description of the fault, in an
$N$-dimensional domain, by a new object of codimension one (an $N-1$-dimensional
object embedded in the $N$-dimensional domain). New differential
equations and suitable interface conditions are derived to couple the new
problem. The firsts contributions were \cite{Alboin2000,Alboin2002}, where a
first \textit{reduced model} (RM) is derived for only conductive faults, which
cut entirely the domain. The fault mesh is composed of a set of contiguous edges
of cells from the porous medium mesh, the method limits in this way the
computational cost. Numerical experiments and theoretical results show the
good behaviour of the proposed method. The authors in
\cite{Faille2002,Jaffre2002,Angot2003} consider a more general model where
low permeable faults can also be taken into account. Finite element and finite
volume approximations are considered with different numerical experiments. Three
dimensional experiments, with realistic geometry and intersecting faults are
presented in \cite{Amir2005}. In the work \cite{Martin2005} the authors consider
a further generalization of the interface conditions, where different a-priori
assumptions of fault pressure shapes in the normal direction are considered
yielding a new
RM with a model parameter.  Theoretical analyses and numerical experiments show
the robustness of the results in different situations. In this article we will
refer to such a model as a \textit{single-layer reduced model} (SLRM).  The
authors in \cite{Angot2009} consider a partially immersed fault with new
coupling conditions at the fault tips.  Two-phase flow in porous media are
considered in \cite{Jaffre2011,Elyes2015} where a RM was introduced for this
problem.

Based on the aforementioned mathematical model a different coupling approach was
introduced in \cite{DAngelo2011}. In this article the fault discretization is
completely independent of the mesh of the porous medium. The extended finite
element method (XFEM) is used to handle this geometrical non-conformity,
yielding a very flexible tool for real simulations. Again with the same type of
approximation we mention \cite{Fumagalli2012a} for a description of convection
and diffusion of a passive scalar in a porous media.  In \cite{Fumagalli2012d}
the two-phase flow problem is considered with different approximation of the
hyperbolic fluxes: upstream mobility and exact Riemann solver. In
\cite{Formaggia2012,Fumagalli2012g} a general RM is presented for a
network of faults where suitable interface conditions are considered in the
intersecting regions.

Finally in \cite{Tunc2012, Faille2014a} the authors assume that one part of the domain can
slip, because of the fault, with respect to another part. To handle this new
feature, a new model is consider with a two layers approximation. In contrast to
the SLRM, we will refer to the method proposed in \cite{Tunc2012} as a
\textit{double-layer reduced model} (DLRM) or simply (DL). Each part of the
domain, situated along the fault, has its own fault approximation. Suitable
interface conditions are considered for the layer-layer coupling.

In this work we continue the analyses of the mathematical scheme proposed in
\cite{Faille2014a}, where an approximation using the hybrid finite
volume scheme \cite{Eymard2010} is considered for both the rock matrix, the
fault, and their coupling. Furthermore the method can handle generic
permeability fields as well as enforce local mass conservation for each cell.
We present the DLRM, introducing its weak formulation and
showing its well posedness. Numerical discretization with different theoretical
results, including the convergence and model error, are presented in detail. A
complex example with a sliding domain shows the effectiveness of the proposed
approach also in such a situation.

This paper is organized as follow: in Section \ref{sec:math_model} the notations
and the governing equations for the RM are presented as well as the
analysis in the continuous spaces. Section \ref{sec:numerical_approximation} is
devoted to the presentation of the discretization of the proposed schemes along with some
important theoretical results.  In Section \ref{sec:examples} a collection of
examples highlights the potential of the proposed methods.  Finally, Section
\ref{sec:conclusion} contains the conclusions.



\section{Mathematical problem} \label{sec:math_model}


\def\pathImages{./Parts/Images}


To ease the presentation we consider only one single fault that cuts
entirely through
the domain. The method can be generalized without any additional difficulties if
we consider several non-intersecting faults.


\subsection{Physical equations}


Let us set, from now on, $i$ and $j$ indices with values $i \in
\left\{1,2,f\right\}$ and $j \in \left\{1,2\right\}$. We consider a regular
domain $\Omega \subset \R^N$, $N=2$ or 3, with Lipschitz-continuous boundary
denoted by $\Gamma \defeq \overline{\Omega} \setminus \Omega$. We suppose that
$\Omega$ is divided into three disjoint subsets, such that $\overline{\Omega} =
\cup_i \overline{\Omega_i}$, where $\Omega_f$ represents the fault. Moreover the
boundary is divided into $\Gamma_i \defeq \Gamma \cap \partial \Omega_i$. Figure
\ref{fig:domain} shows an example.
\begin{figure}[!htp]
    \centering
    \includegraphics{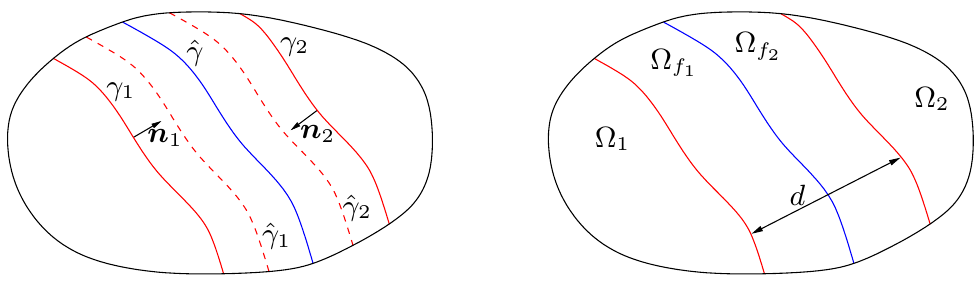}
    \caption{Representation of each sub-domain where the thickness of the fault
        is exaggerated for visualization purpose.}%
    \label{fig:domain}
\end{figure}
The interfaces, of codimension one, between the domain $\Omega_j$ and $\Omega_f$
are denoted as $\gamma_j \subset \R^{N}$ with unit normal $\n_j$, pointing
outwards from $\Omega_j$. Since $\Omega_1$ can slide along $\Omega_2$, or vice
versa, due to the fault we subdivide the latter into two disjoint layers
$\Omega_{f_j}$, such that $\overline{\Omega_f} = \cup_j
\overline{\Omega_{f_j}}$, defined in the sequel. Then, following
\cite{Martin2005}, we suppose that there exists a manifold $\gamhat \subset
\R^N$ of co-dimension one and of class piecewise-$C^2$ such that $\hat{\gamma}$
represents the centre of the fault and $\Omega_f$ may be defined as
\begin{gather} \label{domainFdefinition}
    \Omega_{f_j} = \left\{ \vect{x} \in \R^N: \vect{x} = \vect{s} + r \n, \,
    \vect{s} \in \gamhat, \, r \in \fSet_j \right\}
    \quad \text{with} \quad
    \fSet_1 \defeq (-d/2,0), \,\,
    \fSet_2 \defeq (0,d/2).
\end{gather}
In \eqref{domainFdefinition} we have denoted by $d \in C^2 ( \gamhat )$
the thickness of $\Omega_f$ and $\n$ the unit normal of
$\gamhat$, pointing from $\Omega_1$ to $\Omega_2$. We assume that $|\gamhat| \gg d$ and there
exist $c_1, c_2 \in \R^+$, with $c_2$ \trapici{small}, such that $d \left(
\vect{s} \right) > c_1$ and $\abs{d^\prime \left( \vect{s} \right)} < c_2$ for all
$\vect{s} \in \gamhat$, \ie the thickness of $\Omega_f$ is small and varies
slowly compared to its other dimensions. Moreover we introduce the centre
line $\gamhatj$ of the fault layer $\Omega_{f_j}$, translating
$\gamhat$ to the middle of $\fSet_j$. We indicate with a lower case subscript the
restriction of data and unknowns to the corresponding sub-domain of $\Omega$.
Finally we define the surrounding domain as $\overline{\Omega_{1,2}} \defeq \cup_j
\overline{\Omega_j}$ and the fault centre line as $\overline{\gamhatiii} \defeq
\cup_j \overline{\gamhatj}$.

We are interested in computing the steady pressure field $p$ and the velocity
field, or Darcy velocity, $\dual$ in the whole domain $\Omega$,
governed by the following Darcy problems, with the classical interface conditions,
formulated in $\Omega_i$. For simplicity we assume homogeneous boundary
conditions for the pressure on $\Gamma$. The problem is: find $(p, \dual)$
such that
\begin{gather} \label{eq:darcy_problem}
    \begin{aligned}
        &\!\!\!\begin{array}{ll}
            \nabla \cdot \dual_i = q_i \\
            \dual_i + \Lambda_{i} \nabla p_i = \vect{0}
        \end{array}
        &&\text{in } \Omega_i\\
        &p_i=0 &&\text{on } \Gamma_i
    \end{aligned}
    \qquad \text{with} \qquad
    \begin{aligned}
        &p_j = p_f \\
        &\dual_j \cdot \n_j = \dual_f \cdot \n_j
    \end{aligned}
    \text{ on } \gamma_j.
\end{gather}
Here $\Lambda_{i} \in \left[ L^\infty \left( \Omega_i \right) \right]^{N\times
N}$ denotes the permeability tensor, such that for almost every $\vect{x} \in
\Omega_i$ is symmetric and positive definite. More specifically we require that
its eigenvalues are included in $0 < \underline{\lambda}_i \leq \eig \Lambda_i
\leq \overline{\lambda}_i$, with $\underline{\lambda}_i , \overline{\lambda}_i
\in \R^+$. In \eqref{eq:darcy_problem} $q_i \in L^2 \left( \Omega_i \right)$ is
a scalar source term which may represents a possible volume source or sink.

We have the following standard result for the Darcy problem, see \cite{Brezzi1991,Quarteroni1994,Ern2004}.
\begin{theorem}
    Under the given hypothesis on the data, problem \eqref{eq:darcy_problem} is well posed.
    In particular, we have $\left( \dual, p \right) \in \Hdiv \left( \Omega \right)
    \times L^2 \left( \Omega \right)$.
\end{theorem}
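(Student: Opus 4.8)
The plan is to recast \eqref{eq:darcy_problem} as the classical dual mixed variational problem and to invoke the Babu\v{s}ka--Brezzi theory. First I would observe that the two interface conditions on $\gamma_j$ are automatically encoded by the choice of \emph{global} function spaces: requiring $\dual \in \Hdiv(\Omega)$ forces continuity of the normal component across $\gamma_j$, which is exactly $\dual_j \cdot \n_j = \dual_f \cdot \n_j$, while seeking a single $p \in L^2(\Omega)$ and letting the boundary datum enter weakly makes $p_j = p_f$ on $\gamma_j$ and $p_i = 0$ on $\Gamma_i$ the natural conditions. Eliminating $\Lambda_i$ from the constitutive law, the weak formulation reads: find $(\dual, p) \in \Hdiv(\Omega) \times L^2(\Omega)$ such that
\begin{align} \label{eq:weak_darcy_sketch}
    \int_\Omega \Lambda^{-1} \dual \cdot \vect{v} \, \de\vect{x} - \int_\Omega p \, \nabla\cdot\vect{v} \, \de\vect{x} &= 0
    \quad \forall \vect{v} \in \Hdiv(\Omega), \\
    - \int_\Omega w \, \nabla\cdot\dual \, \de\vect{x} &= - \int_\Omega q \, w \, \de\vect{x}
    \quad \forall w \in L^2(\Omega), \nonumber
\end{align}
where $\Lambda$ and $q$ denote the piecewise-defined permeability and source, and $\Lambda^{-1} \in [L^\infty(\Omega)]^{N\times N}$ is well defined because $\eig \Lambda_i \geq \underline{\lambda}_i > 0$.

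The second step is to verify the three hypotheses of the Brezzi theorem for the forms $a(\dual,\vect{v}) := \int_\Omega \Lambda^{-1}\dual\cdot\vect{v}\,\de\vect{x}$ and $b(\vect{v},w) := -\int_\Omega w\,\nabla\cdot\vect{v}\,\de\vect{x}$. Continuity of both forms follows from Cauchy--Schwarz together with the bound $\norm{\Lambda^{-1}}_{L^\infty} \leq \max_i \underline{\lambda}_i^{-1}$. Coercivity of $a$ on the kernel $K := \{\vect{v}\in\Hdiv(\Omega): \nabla\cdot\vect{v} = 0\}$ is immediate, since on $K$ one has $\norm{\vect{v}}_{\Hdiv(\Omega)}^2 = \norm{\vect{v}}_{L^2(\Omega)}^2$ and $a(\vect{v},\vect{v}) \geq (\max_i \overline{\lambda}_i)^{-1}\norm{\vect{v}}_{L^2(\Omega)}^2$. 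The only nontrivial point is the inf-sup (surjectivity) condition for $b$: given $w \in L^2(\Omega)$, I would solve the auxiliary homogeneous Dirichlet problem $-\Delta \phi = w$ in $\Omega$, $\phi = 0$ on $\Gamma$, which is well posed on the bounded Lipschitz domain $\Omega$, set $\vect{v} := -\nabla\phi \in \Hdiv(\Omega)$ so that $\nabla\cdot\vect{v} = w$, and use the Poincar\'e inequality $\norm{\nabla\phi}_{L^2(\Omega)} \leq C_\Omega \norm{w}_{L^2(\Omega)}$ to obtain $\norm{\vect{v}}_{\Hdiv(\Omega)} \leq (1+C_\Omega^2)^{1/2}\norm{w}_{L^2(\Omega)}$; since $b(\vect{v},w) = -\norm{w}_{L^2(\Omega)}^2$, this yields the inf-sup constant $\beta \geq (1+C_\Omega^2)^{-1/2}$.

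Finally, the Babu\v{s}ka--Brezzi theorem gives existence, uniqueness and continuous dependence on the data of the solution $(\dual,p) \in \Hdiv(\Omega)\times L^2(\Omega)$ of \eqref{eq:weak_darcy_sketch}; note that no compatibility condition on $q$ is needed because of the homogeneous Dirichlet condition on $\Gamma$. A standard density and integration-by-parts argument then recovers \eqref{eq:darcy_problem} in the distributional sense together with the interface conditions on $\gamma_j$, so the two formulations are equivalent, which proves the claim. The main (and only mild) obstacle is genuinely the inf-sup condition, that is, the surjectivity of the divergence from $\Hdiv(\Omega)$ onto $L^2(\Omega)$; here it suffices that $\Omega$ is a bounded Lipschitz domain, while the internal decomposition $\overline{\Omega} = \cup_i\overline{\Omega_i}$ plays no role since all arguments are carried out in the global spaces. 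This is exactly the content of the cited references \cite{Brezzi1991,Quarteroni1994,Ern2004}.
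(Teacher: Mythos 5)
Your proposal is correct, and it coincides with the paper's (implicit) argument: the paper does not prove this theorem at all, but simply cites \cite{Brezzi1991,Quarteroni1994,Ern2004}, and the standard dual mixed formulation with the Babu\v{s}ka--Brezzi conditions that you spell out — continuity, coercivity of $a$ on the divergence-free kernel, and the inf-sup condition via an auxiliary Poisson problem — is precisely the content of those references. The only point worth stating explicitly is that the piecewise data $\Lambda_i$, $q_i$ assemble into a globally uniformly elliptic $\Lambda$ and a global $q\in L^2(\Omega)$, which is immediate from the stated eigenvalue bounds on each $\Omega_i$, $i\in\{1,2,f\}$.
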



\subsection{The reduced model}


For readers convenience we recall the main results and a brief derivation of the
DLRM, a more detailed derivation can be found in
\cite{Tunc2012,Faille2014a}.  We introduce the projection matrices in the normal
and tangential directions of $\gamhat$ as $\NN \defeq \n \otimes \n$ and $\TT
\defeq \eye - \NN$, respectively. The Darcy velocity in the fault can be
decomposed into its normal and tangential parts as $\dual_f = \NN \dual_f + \TT
\dual_f = \dual_{f,\n} + \dual_{f,\ttau}$, with $\dual_{f,\n} \defeq \NN
\dual_f$ and $\dual_{f,\ttau} \defeq \TT \dual_f$.  Moreover we introduce also
the normal and tangential divergence and gradient on $\gamhat$, given $\bm{v}$
and $v$ two regular functions we define
\begin{gather*}
    \nabla \cdot \bm{v} = \nabla_{\n} \cdot \bm{v} + \nabla_{\ttau} \cdot \bm{v}
    \quad \text{with} \quad
    \nabla_{\n} \cdot \bm{v} \defeq \NN : \nabla \bm{v}
    \quad \text{and} \quad \nabla_{\ttau}
    \cdot \bm{v} \defeq \TT : \nabla \bm{v}, \\
    \nabla v = \nabla_{\n} v + \nabla_{\ttau} v
    \quad \text{with} \quad
    \nabla_{\n} v \defeq \NN \nabla v
    \quad \text{and} \quad \nabla_{\ttau} v \defeq \TT \nabla v.
\end{gather*}
The conservation equation, for each side of the fault, is integrated along its
normal direction on $\fSet_j$ to obtain a conservation equation written in the
tangential space of $\gamma_j$
\begin{gather} \label{eq:red_conserv}
    \nabla_{\ttau} \cdot \dualhatj = \qhatj + \jjj
    \quad \text{in } \gamhatj.
\end{gather}
In the latter equation we have indicated with $\dualhatj$ the reduced flux
for each layer of the fault, defined as $\dualhatj \defeq \int_{\fSet_j}
\dual_{f,\ttau}$, and the reduced source term as $\qhatj \defeq \int_{\fSet_j}
q_f$. Moreover $\jjj$ indicates the jump of the flux across the corresponding
layer of the fault, defined as
\begin{gather*}
    \jjj \defeq
    (-1)^j(\left. \dual_f \cdot \n \right|_\gamhat - \left. \dual_f \cdot \n \right|_{\gamma_j})
    = (-1)^j\left(\left. \dualv - \dual_j  \cdot \n \right|_{\gamma_j}\right),
\end{gather*}
where $\dualv$ stands for $\left.
\dual_f \cdot \n \right|_\gamhat$. The Darcy equation require that the permeability in the
fault can be written as $\Lambda_f = \lambda_{f,\n} \NN + \lambda_{f,\ttau}
\TT$, with $\lambda_{f,\n}$ and $\lambda_{f,\ttau}$ strictly positive for almost
every $\vect{x} \in \Omega_f$. For a more general case refer to \cite{Angot2009}.
Considering the projected Darcy equation on the tangential space of $\gamhatj$,
integrated in the normal direction of the latter, we obtain
\begin{gather} \label{eq:red_darcy}
    \dualhatj + \hat{\lambda} \nabla_{\ttau} \phatj = \vect{0}
    \quad \text{in } \gamhatj,
\end{gather}
where $\phatj$ is the reduced pressure in each part of the fault, defined as
$\phatj \defeq \frac{2}{d} \int_{\fSet_j} p_f$, and $\hat{\lambda}$ is the
effective permeability in the tangential direction, defined as $\hat{\lambda}
\defeq d \lambda_{f,\ttau} / 2$. We can consider a different value of
$\hat{\lambda}$ for each layer but, for easy of the presentation, we avoid to
specify it. In Section \ref{subsec:slipping_domain} we present an example with
different value of $\hat{\lambda}$ for each layer of the fault. Projecting the
Darcy equation on the normal space of the fault and integrating in the normal
direction on the first half of $\fSet_1$, and on the second half of $\fSet_2$
respectively, we end up with the coupling conditions
\begin{gather*} \label{eq:red_cc1}
    \dual_1 \cdot \n = 2 \lambda_\gamhat
    \left( p_1 - \phati \right)
    \quad \text{and} \quad
    \dual_2 \cdot \n = 2 \lambda_\gamhat
    \left( \phatii - p_2 \right)
\end{gather*}
where $\lambda_\gamhat$ is the effective permeability in the normal direction of
the fault, defined as $\lambda_\gamhat \defeq 2 \lambda_{f,\n} / d$. In the
latter equations we have used a suitable approximation of the integral of
$\dual_i \cdot \n$. We need to introduce an additional equation to express the
coupling of the velocity between the two sides of the fault.  We consider again the
projection of the Darcy equation on the normal space of $\gamhat$ and integrating, in the
normal direction, between the second half of $\fSet_1$ and the first half of
$\fSet_2$ we obtain
\begin{gather} \label{eq:red_cc2}
    \dualv = \lambda_\gamhat \jump{\phat}{\gamhat},
\end{gather}
where, in this case, the jump operator is defined as $\jump{\phat}{\gamhat}
\defeq \phati - \phatii$.
Considering \eqref{eq:darcy_problem} for $i=j$ coupled with
\eqref{eq:red_conserv}, \eqref{eq:red_darcy}, \eqref{eq:red_cc1} and
\eqref{eq:red_cc2} we end up with the following problem: find $(\dual_j,p_j)$
and $(\dualhatj, \phatj)$ such that
\begin{subequations} \label{eq:red_problem}
\begin{gather} \label{eq:red_problem1}
    \begin{aligned}
        &\!\!\!\begin{array}{ll}
            \nabla \cdot \dual_i = q_i \\
            \dual_i + \Lambda_{i} \nabla p_i = \vect{0}
        \end{array}
        &&\text{in } \Omega_i\\
        &p_i=0 &&\text{on } \Gamma_i
    \end{aligned}
    \qquad \text{and} \qquad
    \begin{aligned}
        &\!\!\!\begin{array}{ll}
            \nabla_{\ttau} \cdot \dualhatj = \qhatj + \jjj \\
            \dualhatj + \hat{\lambda} \nabla_{\ttau} \phatj = \vect{0}
        \end{array}
        &&\text{in } \gamhatj\\
        &\phatj = 0 &&\text{on } \partial \gamhatj,
    \end{aligned}
\end{gather}
with the coupling conditions
\begin{align} \label{eq:red_problem2}
    \begin{alignedat}{2}
        &\dual_1 \cdot \n = 2 \lambda_\gamhat
        \left( p_1 - \phati \right) & \quad& \text{on } \gamhati\\
        &\dual_2 \cdot \n = 2 \lambda_\gamhat
        \left( \phatii - p_2 \right) && \text{on } \gamhatii\\
        &\dualv = \lambda_\gamhat \jump{\phat}{\gamhat} &&
        \text{on } \gamhat
    \end{alignedat}
\end{align}
\end{subequations}
Summing and subtracting the first two equations of \eqref{eq:red_problem2} we
end up with an equivalent set of coupling conditions
\begin{gather} \label{eq:red_problem3}
    \tag{\ref{eq:red_problem2}-bis}
    \begin{aligned}
        &\mean{\dual\cdot \n}{\gamhat} = \lambda_\gamhat \left( \jump{p}{\gamhat} -
        \jump{\phat}{\gamhat} \right)\\
        &\jump{\dual \cdot \n}{\gamhat} = 4 \lambda_\gamhat \left(
        \mean{p}{\gamhat} - \mean{\phat}{\gamhat} \right)\\
        &\dualv = \lambda_\gamhat \jump{\phat}{\gamhat}
    \end{aligned}
    \quad \text{on } \gamhat,
\end{gather}
where we have indicated by $\mean{\phat}{\gamhat} \defeq \frac{1}{2} \left(
\phati - \phatii \right)$ and, with an abuse of notations, by
$\mean{\dual\cdot \n}{\gamhat} \defeq \frac{1}{2} \left( \dual_1 \cdot \n +
\dual_2 \cdot \n \right)$,
$\jump{\dual \cdot \n}{\gamhat} \defeq \left( \dual_1 \cdot \n - \dual_2 \cdot
\n \right)$, $\jump{p}{\gamhat} \defeq p_1 - p_2$ and $\mean{p}{\gamhat} \defeq
\frac{1}{2} \left( p_1 + p_2 \right)$.

\begin{remark}
    In the sequel we will use a numerical scheme based on the primal formulation
    of \eqref{eq:red_problem}, since it is a trivial derivation we will refer to this problem
    for both its dual or primal formulation.
\end{remark}


\subsection{Weak formulation}


In the sequel we will use the symbols $a \lesssim b \Leftrightarrow a \leq c_1
b$ and $a \gtrsim b \Leftrightarrow a \geq c_2 b$ for some $c_1, c_2 \in \R^+$
dependent only on the data problem of \eqref{eq:red_problem} or on data which
are not important for the analyses. The constants are independent from the
grid size. First of all we
introduce the functional setting for problem \eqref{eq:red_problem}. We consider
the functional spaces $\V_j \defeq H_{\Gamma_j}^1\left( \Omega_j \right)$, endowed with the
usual norms, and the global functional space for the domain $\V \defeq \prod_j
\V_j$.  Moreover we define
\begin{gather*}
    \Vhat_j \defeq \left\{ \hat{v}_j:\, \hat{v}_j|_{\partial \gamhatj} = 0,
    \hat{v}_j \in L^2\left( \gamhatj \right) \text{ and }
    \nabla_{\ttau} \hat{v}_j \in \left[ L^2\left( \gamhatj \right) \right]^{N-1}
    \right\}
    \quad \text{and} \quad
    \Vhat \defeq \prod_j \Vhat_j,
\end{gather*}
with norms
\begin{gather*}
    \norm{\hat{v}_j}_{\Vhat_j}^2 \defeq \norm{\hat{v}_j}_{L^2(\gamhatj)}^2 +
    \norm{\nabla_{\ttau} \hat{v}_j}_{L^2(\gamhatj)}^2, \quad
    \norm{v}_{\V}^2 \defeq \sum_{j} \norm{v}_{\V_j}^2, \\
    \norm{\hat{v}}_{\Vhat}^2 \defeq \sum_{j}
    \norm{\hat{v}_j}_{\Vhat_j}^2
    \quad \text{and} \quad
    \norm{\left( v, \hat{v} \right)}_{\V\times \Vhat}^2 \defeq \norm{v}_{\V}^2 +
    \norm{\hat{v}}_{\Vhat}^2
\end{gather*}
Considering $\left( \cdot, \cdot \right)_A : L^2(A) \times L^2(A)
\rightarrow \R$ the scalar product in $L^2(A)$, with $A \subset \Omega$, we introduce
the bilinear forms for the diffusive parts as
\begin{gather*}
    a_\Omega \left(p,v\right) \defeq \sum_{j} \left( \Lambda_j \nabla p_j,
    \nabla v_j \right)_{\Omega_j }
    \quad \text{and} \quad
    a_\gamhat \left(\phat,\hat{v}\right) \defeq  \sum_{j} \left( \hat{\lambda}
    \nabla_{\ttau} \phatj, \nabla_{\ttau} \hat{v}_j \right)_{\gamhatj},\\
    a \left( \left( p, \phat \right), \left( v, \hat{v} \right) \right) \defeq
    a_\Omega \left(p,v\right)+ a_\gamhat \left(\phat,\hat{v}\right).
\end{gather*}
Moreover we consider also the bilinear forms for the coupling conditions,
considering \eqref{eq:red_problem3}, for the jumps and averages as
\begin{gather*}
    \avform{\left( p, \phat \right)}{\left( v, \hat{v} \right)}
    \defeq 4 \left( \lambda_\gamhat \mean{p}{\gamhat} - \lambda_\gamhat
    \mean{\phat}{\gamhat},
    \mean{v}{\gamhat} - \mean{\hat{v}}{\gamhat} \right)_\gamhat,\\
    \jform{\left( p, \phat \right)}{\left( v, \hat{v} \right)}
    \defeq \left( \lambda_\gamhat \jump{p}{\gamhat} -
    \lambda_\gamhat \jump{\phat}{\gamhat},  \jump{v}{\gamhat} -
    \jump{\hat{v}}{\gamhat} \right)_\gamhat + \left( \lambda_\gamhat
    \jump{\phat}{\gamhat} , \jump{\hat{v}}{\gamhat} \right)_\gamhat,\\
    \ccform{\left( p, \phat \right)}{\left( v, \hat{v} \right)}
    \defeq \avform{\left( p, \phat \right)}{\left( v, \hat{v} \right)
    } + \jform{\left( p, \phat \right)}{\left( v, \hat{v} \right)
    }
\end{gather*}
or considering the equivalent form \eqref{eq:red_problem2} we introduce
\begin{gather*}
    \ccform{ \left( p, \phat \right)}{ \left( v, \hat{v} \right)}
    \defeq 2 \left( \lambda_\gamhat p_1 - \lambda_\gamhat \phati,
    v_1 - \hat{v}_1 \right)_{\gamhati} + 2  \left(  \lambda_\gamhat p_2 -
   \lambda_\gamhat \phatii, v_2 - \hat{v}_2
   \right)_{\gamhatii} + \\ + \left( \lambda_\gamhat
    \jump{\phat}{\gamhat}, \jump{\hat{v}}{\gamhat} \right)_{\gamhat}.
\end{gather*}
The global bilinear form is defined as
\begin{gather*}
    \bForm{\left( p, \phat \right)}{\left( v, \hat{v}\right)}
    \defeq \aform{\left( p, \phat \right)}{\left( v, \hat{v}
    \right)} + \ccform{\left( p, \phat \right)}{ \left( v, \hat{v}
    \right)}.
\end{gather*}
Finally we introduce the functional for the right-hand side
\begin{gather*}
    F\left(v, \hat{v} \right) \defeq \sum_{j} \left( q_j,  v_j
    \right)_{\Omega_j} + \left( \qhatj, \hat{v}_j
    \right)_{\gamhatj}.
\end{gather*}
We present the weak formulation for problem \eqref{eq:red_problem}:
find $\left(p, \phat \right) \in \V \times \Vhat$ such that
\begin{gather} \label{eq:weak_form}
    \bForm{\left( p, \phat \right)}{\left( v, \hat{v}\right)}
    = F\left(v, \hat{v} \right) \qquad \forall \left( v,
    \hat{v} \right) \in \V \times \Vhat.
\end{gather}
\begin{lemma}[Well posedness]
    Problem \ref{eq:weak_form} is well posed, moreover
    $\norm{\left(p,\phat\right)}_{\V \times \Vhat} \lesssim 1$.
\end{lemma}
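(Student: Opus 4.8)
The plan is to verify the hypotheses of the Lax--Milgram lemma for the bilinear form $A$ and the functional $F$ on the Hilbert space $\V\times\Vhat$: continuity of $F$, and continuity and coercivity of $A$. Since \eqref{eq:weak_form} is a primal (pressure-only) formulation, $A$ is a genuinely elliptic form and no inf--sup/saddle-point argument is needed; $A$ is moreover symmetric, so one could equally invoke the Riesz representation theorem. Once these three properties are in hand, existence, uniqueness and the announced bound are immediate, and all constants depend only on the data of \eqref{eq:red_problem}.

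A preliminary observation makes the permeability coefficients harmless: since $\NN$ and $\TT=\eye-\NN$ are complementary orthogonal projectors, the eigenvalues of $\Lambda_f=\lambda_{f,\n}\NN+\lambda_{f,\ttau}\TT$ are exactly $\lambda_{f,\n}$ and $\lambda_{f,\ttau}$, so the assumed spectral bounds on $\Lambda_f$ give $0<\underline{\lambda}_f\le\lambda_{f,\n},\lambda_{f,\ttau}\le\overline{\lambda}_f$; combined with the two-sided bounds on $d\in C^2(\gamhat)$, the effective permeabilities $\hat\lambda=d\lambda_{f,\ttau}/2$ and $\lambda_\gamhat=2\lambda_{f,\n}/d$ are then bounded above and below by positive constants. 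Continuity of $F$ then follows from Cauchy--Schwarz, noting $q_j\in L^2(\Omega_j)$ and $\qhatj=\int_{\fSet_j}q_f\in L^2(\gamhatj)$ (again Cauchy--Schwarz over the bounded interval $\fSet_j$, as $q_f\in L^2(\Omega_f)$), whence $|F(v,\hat v)|\lesssim\|(v,\hat v)\|_{\V\times\Vhat}$. Continuity of $A$ is likewise routine: $a_\Omega$ and $a_\gamhat$ are bounded by the $L^\infty$ bounds just mentioned and Cauchy--Schwarz, while for $cc$ — working with the expression written from \eqref{eq:red_problem2} — each summand is a weighted $L^2$ inner product over $\gamhati$, $\gamhatii$ or $\gamhat$ (identified where needed with the physical interfaces $\gamma_j$ through the normal translation, whose Jacobian is controlled by the slow-variation bound $|d'|<c_2$), controlled by the trace inequality $\|v_j\|_{L^2(\gamma_j)}\lesssim\|v_j\|_{\V_j}$ and the embedding $\|\hat v_j\|_{L^2(\gamhatj)}\le\|\hat v_j\|_{\Vhat_j}$.

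The core of the argument is coercivity. Evaluating the coupling form on the diagonal via \eqref{eq:red_problem2},
\[
    \ccform{(v,\hat v)}{(v,\hat v)}
    = 2\bigl(\lambda_\gamhat(v_1-\hat v_1),\,v_1-\hat v_1\bigr)_{\gamhati}
    + 2\bigl(\lambda_\gamhat(v_2-\hat v_2),\,v_2-\hat v_2\bigr)_{\gamhatii}
    + \bigl(\lambda_\gamhat\jump{\hat v}{\gamhat},\,\jump{\hat v}{\gamhat}\bigr)_{\gamhat}
    \ \ge\ 0 ,
\]
because $\lambda_\gamhat>0$; hence the coupling term cannot destroy coercivity. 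It remains to bound $a_\Omega(v,v)+a_\gamhat(\hat v,\hat v)$ from below. Using $\eig \Lambda_j\ge\underline{\lambda}_j>0$ and the Poincaré--Friedrichs inequality on each $\Omega_j$ (valid because $v_j$ vanishes on $\Gamma_j$, of positive surface measure) gives $a_\Omega(v,v)\gtrsim\|v\|_{\V}^2$; using the positive lower bound on $\hat\lambda$ and the Poincaré--Friedrichs inequality on each $\gamhatj$ (valid because $\hat v_j$ vanishes on $\partial\gamhatj$) gives $a_\gamhat(\hat v,\hat v)\gtrsim\|\hat v\|_{\Vhat}^2$. Adding up, $\bForm{(v,\hat v)}{(v,\hat v)}\gtrsim\|(v,\hat v)\|_{\V\times\Vhat}^2$.

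Lax--Milgram now yields a unique $(p,\phat)\in\V\times\Vhat$ solving \eqref{eq:weak_form}; testing with $(v,\hat v)=(p,\phat)$ and combining coercivity with the continuity of $F$ gives $\|(p,\phat)\|_{\V\times\Vhat}^2\lesssim\bForm{(p,\phat)}{(p,\phat)}=F(p,\phat)\lesssim\|(p,\phat)\|_{\V\times\Vhat}$, i.e. $\|(p,\phat)\|_{\V\times\Vhat}\lesssim1$. The step I expect to require the most care is coercivity: one must make sure the coupling form is genuinely non-negative on the diagonal — transparent in the form \eqref{eq:red_problem2} but needing a completion of the square if one insists on the averages/jumps form \eqref{eq:red_problem3} — and one must justify a Poincaré-type inequality on the fault centre-line manifolds $\gamhatj$, which are bounded piecewise-$C^2$ hypersurfaces with non-empty boundary; the remaining estimates are standard Cauchy--Schwarz and trace arguments.
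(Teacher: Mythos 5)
Your proposal is correct and follows essentially the same route as the paper: Lax--Milgram, continuity of the bilinear forms via Cauchy--Schwarz and trace inequalities for the interface terms, coercivity obtained from the non-negativity of the coupling form $cc$ on the diagonal together with the coercivity of the stiffness form, and the a priori bound by testing with the solution. You merely spell out more explicitly what the paper asserts (the diagonal positivity of $cc$ and the Poincar\'e inequalities on $\Omega_j$ and $\gamhatj$), which is consistent with the paper's argument.
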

\begin{proof}
    Clearly all the bilinear forms and the functional introduced are linear,
    we are going to apply the Lax-Milgram theorem and obtain
    the existence and uniqueness of the solution. We prove the continuity of
    the bilinear forms, introducing $c_0 = \displaystyle \max_{i=1,2}
    \norm{\Lambda_i}_{L^\infty(\Omega_i)}$, we have
    \begin{gather*}
        \abs{\aform{\left( p, \phat \right)}{\left( v, \hat{v} \right)}}
        \leq \abs{a_\Omega \left(p,v\right)} + \abs{a_\gamhat
        \left(\phat,\hat{v}\right)} \leq
        c_0\norm{p}_{\V} \norm{v}_{\V} + \norm{\hat{\lambda}}_{L^\infty(\gamhat)}
        \norm{\phat}_{\Vhat} \norm{\hat{v}}_{\Vhat},
    \end{gather*}
    considering the maximum between $c_0$ and the norm of $\hat{\lambda}$ we
    obtain the bound for the bilinear form: $\abs{\aform{\left( p, \phat
    \right)}{\left( v, \hat{v} \right)}} \lesssim
    \norm{\left(p,\phat\right)}_{\V\times \Vhat
    }\norm{\left(v,\hat{v}\right)}_{\V\times\Vhat}$.
    We consider now the bilinear forms associated to the coupling conditions
    \begin{gather*}
        \abs{\ccform{\left( p, \phat \right)}{\left( v, \hat{v} \right)
        }} \leq \abs{\avform{\left( p, \phat \right)}{\left( v,
        \hat{v} \right)}} + \abs{\jform{\left( p, \phat \right)}{
        \left( v, \hat{v} \right)}},
    \end{gather*}
    using the inequality for the averages and the jumps operators introduced in
    \cite{Angot2003}, \ie
    \begin{gather*}
        \norm{\mean{v}{\gamhat}}_{L^2(\gamhat)} \lesssim \norm{v}_{\V}
        \quad \text{and} \quad
        \norm{\jump{v}{\gamhat}}_{L^2(\gamhat)} \lesssim \norm{v}_{\V},
    \end{gather*}
    then we have
    \begin{gather*}
       \abs{\avform{\left( p, \phat \right)}{\left( v,\hat{v}
       \right)}} \lesssim \abs{\left(\mean{p}{\gamhat} -
       \mean{\phat}{\gamhat},\mean{v}{\gamhat} - \mean{\hat{v}}{\gamhat}
       \right)_\gamhat} \lesssim
       \norm{\left(p,\phat\right)}_{\V\times\Vhat}\norm{\left(v,\hat{v}\right)
       }_{\V\times\Vhat},
       \\
       \abs{\jform{\left( p, \phat \right)}{\left( v, \hat{v}
       \right)}} \lesssim \abs{\left(\jump{p}{\gamhat} -
       \jump{\phat}{\gamhat} ,\jump{v}{\gamhat}  -
       \jump{\hat{v}}{\gamhat}\right)_\gamhat} + \abs{\left(
       \jump{\phat}{\gamhat} , \jump{\hat{v}}{\gamhat}\right)_\gamhat} \lesssim
       \norm{\left(p,\phat\right)}_{\V\times\Vhat}\norm{\left(v,\hat{v}\right)
       }_{\V\times\Vhat}
    \end{gather*}
    The functional in \eqref{eq:weak_form} is clearly continuous, while the
    coercivity of the global bilinear form is proved given the positivity of
    $\ccform{\cdot}{\cdot}$ and the coercivity of the stiffness bilinear form. The bound on the
    solution is obtained considering the coercivity of $a$ and the continuity of
    $F$.
\end{proof}




\section{Numerical approximation} \label{sec:numerical_approximation}


We introduce the definition of discretization for $\Omega_{1,2}$, the porous
medium domain. The discretization of the fault follows from the discretization of
the domain.  We report in Figure \ref{fig:cone_full} a schematic representation
of some notations we introduce in Definition \ref{definition:discretization}.
\begin{figure}[!htp]
    \centering
    \includegraphics{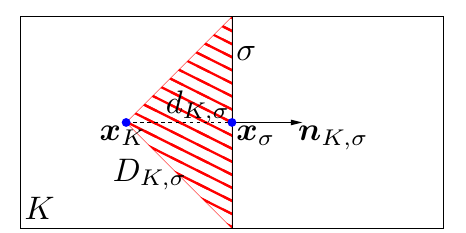}
    \caption{Notation useful for the numerical scheme, given a cell $K$.}%
    \label{fig:cone_full}
\end{figure}
\begin{definition}[Discretization of $\Omega_{1,2}$] \label{definition:discretization}
    A discretization of $\Omega_{1,2}$, denoted by $\disc$, is defined as the triplet
    $\disc \defeq (\mesh, \edges, \points)$ where

    \begin{itemize}

        \item $\mesh$ is the set of control volumes. The control volumes are non-empty
            connected and disjoint subset of $\Omega_{1,2}$ such that
            $\overline{\Omega_{1,2}} = \cup_{K \in \mesh} \overline{K}$. Let $\abs{K}
            > 0$ the measure of $K$ and $h_K \in \R^+$ its diameter. We indicate
            by $\displaystyle h_\disc \defeq \sup \left\{ h_K, K \in \mesh \right\}$ the
            diameter of the discretization;

        \item $\edges$ is the set of the edges, divided into the set of external
            edges $\extEdges = \partial \Omega$, the set of internal edges
            $\intEdges$ and the set of fault edges $\edges_\gamma = \partial
            \Omega_1 \cap \partial \Omega_2$;
            we have $\edges=\intEdges \cup \extEdges \cup \edges_\gamma$. Let
            $\abs{\sigma} > 0$ the measure of $\sigma$. We denote by
            $\edges_K \subset \edges$ the set of all edges of a control volume
            $K$ and by
            $\mesh_\sigma \defeq \left\{ K \in \mesh: \, \sigma \in \edges_K
            \right\}$ the set of all elements facing a given edge $\sigma$;

        \item $\points$ is the set of points, defined by $\points \defeq \left(
            \vect{x}_K \right)_{K \in \mesh} \cup \left( \vect{x}_\sigma \right)_{\sigma \in
            \mesh}$, where $\bm{x}_K$ is the centre of mass for the cell $K \in
            \mesh$ and $\bm{x}_\sigma$ is the barycentre of the face
            $\sigma \in \edges$;

        \item for any cell $K \in \mesh$ and face $\sigma \in \edges_K$ we
            indicate by $\normal_{K,\sigma}$ the unit vector normal to $\sigma$ outward to
            $K$;

        \item $D_{K,\sigma} \in K$ is the cone with vertex $\vect{x}_K$ and basis $\sigma
            \in \edges_K$. We indicate with $d_{K,\sigma} \in \R^+$ the orthogonal
            distance between $\bm{x}_K$ and $\sigma$.

    \end{itemize}

    The set, or family, of all the discretizations $\disc$ is denoted by $\family$.

\end{definition}
We introduce also a parameter that measure the quality of the mesh
\begin{gather} \label{eq:quality_mesh}
    \theta_\disc \defeq \max \br{ \max_{\sigma \in \intEdges, \mesh_\sigma =
    \left\{ K, L\right\} } \dfrac{d_{K,\sigma}}{d_{L,\sigma}},
    \max_{K \in \mesh, \sigma \in \edges_K}
    \dfrac{h_K}{d_{K,\sigma}} }
\end{gather}
For the discretization of the fault, in problem \eqref{eq:red_problem}, we
suppose that $\disc$ is conforming with the fault,
\ie the fault is represented by a set of continuous edges of $\edges_\gamma$. However
we allow a non-matching approximation of $\gamhati$ and $\gamhatii$. We
indicate with $\hat{\disc} = \left( \hat{\mesh}, \hat{\edges}, \hat{\points}
\right) \in \hat{\family}$ the discretization of the fault, where $\hat{\mesh}$
is the set of control volumes of the approximation of $\gamhatiii$. We
consider the same notation of Definition \ref{definition:discretization} where
$\hat{\family}$ is considered instead of $\family$. Thanks to Definition
\ref{definition:discretization} we have $\n_{K, \sigma} = - \n_{L, \sigma}$ for
each $\mesh_{\sigma} = \left\{K, L \right\}$, while for the fault we assume the
following statement.
\begin{hypotheses}[Normal discrepancy]\label{hyp:normal}
    For each $\sigma \in \hat{\edges}$, with $\mesh_\sigma = \left\{K, L \right\}$, we suppose that
    \begin{gather*}
        \n_{K, \sigma} + \n_{L, \sigma} = \Order{h_\disc}
        \quad \text{as} \quad h_\disc \rightarrow 0.
    \end{gather*}
\end{hypotheses}
We consider also the mesh quality parameter $\theta_{\hat{\disc}}$ for $\hat{\disc}$, defined as
\eqref{eq:quality_mesh} where $\hat{\mesh}$ and $\hat{\edges}_{\rm int}$ are
used instead of $\mesh$ and $\intEdges$, respectively. We assume that exists
$\theta \in \R^+$ such that
\begin{gather*}
    \theta \defeq \max \left\{ \sup \left\{ \theta_\disc,
    \disc \in \family \right\} , \sup \left\{ \theta_{\hat{\disc}}, \hat{\disc} \in
    \hat{\family} \right\} \right\}.
\end{gather*}
We introduce the following discrete spaces for both the domain and the fault
discretization: one degree of freedom for each element and one for each face,
namely for the porous domain
\begin{gather*}
    \X \defeq \left\{ v = \left( \left(v_K \right)_{K \in \mesh}, \left( v_\sigma
    \right)_{\sigma \in \edges}\right): v_K \in \R, v_\sigma \in \R \right\}
    \text{ and }
    \Xz \defeq \left\{ v \in \X: v_\sigma = 0 \forall \sigma \in
    \extEdges \right\},
\end{gather*}
and for the fault
\begin{gather*}
    \Xhat \defeq \left\{ \hat{v} = \left( \left( \hat{v}_K \right)_{K
    \in \hat{\mesh}}, \left( \hat{v}_\sigma
    \right)_{\sigma \in \hat{\edges}}\right): \hat{v}_K \in \R, \hat{v}_\sigma \in \R \right\}
    \text{ and }
    \Xhatz \defeq \left\{ \hat{v} \in \Xhat: \hat{v}_\sigma = 0
    \forall \sigma \in
    \hat{\edges}_{\rm ext} \right\}.
\end{gather*}
Where the spaces $\Xz \subset \X$ and $\Xhatz \subset \Xhat$ include the boundary
conditions. We consider also the global discrete space as $\Y \defeq
\X \times \Xhat$ and $\Yz \defeq \Xz \times \Xhatz$, with $\Yz \subset \Y$.
Since the discretization of the fault is
constructed from the discretization of the porous medium, for exigence in
notation we will indicate, in presence of both, only the latter. The spaces
$\Xz$ and $\Xhatz$ are the discrete approximations of $\V$
and $\Vhat$, respectively. For each of the
previous space we introduce a discrete semi-norm: given $v \in \X$ and
$\hat{v} \in \Xhat$, we define
\begin{gather} \label{eq:discrete_semi-norms}
    \abs{v}_{\X} \defeq \sum_{K \in \mesh} \sum_{\sigma
    \in \edges_K} \dfrac{\abs{\sigma}}{d_{K,\sigma}} \left( v_\sigma - v_K
    \right)^2,
    \quad
    \abs{\hat{v}}_{\Xhat} \defeq \sum_{K \in
    \hat{\mesh}} \sum_{\sigma\in \edges_K} \dfrac{\abs{\sigma}}{d_{K,\sigma}}
    \left( \hat{v}_\sigma - \hat{v}_K \right)^2
\end{gather}
and concerning the global space for all $\left( v, \hat{v} \right) \in \Y$ we have
$\abs{\left(v, \hat{v}\right)}_{\Y}^2 \defeq \abs{v}_{\X}^2 + \abs{\hat{v}}_{\Xhat}^2$.
Given a function $v \in \X$, let us set $\Pi_\mesh v \in L^2(\Omega)$ the
piece-wise function defined by $\Pi_\mesh v ( \vect{x} ) = v_K $ for a.e.
$\vect{x} \in K$, for all $K \in \mesh$. We indicate with $\Pi_{\hat{\mesh}}:
\Xhat \rightarrow L^2(\gamhatiii)$ the same projector
operator defined on the two layers of the fault.
Introducing $D_\sigma v \defeq \abs{v_K - v_L}$ and $d_\sigma \defeq d_{K,\sigma} +
d_{L,\sigma}$ for $\mesh_\sigma = \left\{ K, L
\right\}$, or $D_\sigma v \defeq \abs{v_K}$ and $d_\sigma \defeq d_{K,\sigma}$
for $\mesh_\sigma = \left\{ K\right\}$, for each function $w = \Pi_\mesh v$,
with $v \in \X$, and for each function $\hat{w}=\Pi_{\hat{\mesh}} \hat{v}$,
with $\hat{v} \in \Xhat$, we define the following discrete norms
\begin{gather} \label{eq:discrete_norms}
    \norm{w}_{1,\mesh} \defeq \sum_{\sigma \in \edges} \abs{\sigma}
    \dfrac{\left( D_\sigma w \right)^2}{d_\sigma}
    \quad \text{and} \quad
    \norm{\hat{w}}_{1,\hat{\mesh}} \defeq \sum_{\sigma \in \hat{\edges}} \abs{\sigma}
    \dfrac{\left( D_\sigma \hat{w} \right)^2}{d_\sigma}.
\end{gather}
It is easy to show that
\begin{gather*}
    \norm{\Pi_\mesh v}_{1,\mesh} \leq \abs{v}_{\X} \quad \forall v \in \Xz
    \quad \text{and} \quad
    \norm{\Pi_{\hat{\mesh}} \hat{v}}_{1,\hat{\mesh}} \leq \abs{\hat{v}}_{\Xhat} \quad
    \forall \hat{v} \in \Xhatz.
\end{gather*}
Finally we introduce the projection operators $P_\disc: C\br{\Omega_{1,2}}
\rightarrow \X$ and $\hat{P}_\disc: C\br{\gamhatiii} \rightarrow
\Xhat$, such that given $\phi \in C\br{\Omega_{1,2}}$ and $\hat{\phi}
\in C\br{\gamhatiii}$ we have
\begin{gather*}
    P_\disc \phi = \left( \left( \phi \br{\vect{x}_K} \right)_{K \in \mesh},
    \left( \phi \br{\vect{x}_\sigma} \right)_{\sigma
    \in \edges} \right)
    \quad \text{and} \quad
    \hat{P}_\disc \hat{\phi} = ( ( \hat{\phi} \br{\vect{x}_K}
    )_{K \in \hat{\mesh}},
    ( \hat{\phi} \br{\vect{x}_\sigma} )_{\sigma
    \in \hat{\edges}} ).
\end{gather*}

To solve numerically problem \eqref{eq:red_problem} we consider the hybrid
finite volume scheme introduced in \cite{Eymard2010,Droniou2010}. We have chosen
to approximate the pressure field with a scalar value for each cell $K$,
indicated with a sub-script $K$, and a scalar value for each edges, indicated
with a sub-script $\sigma$. The core of the scheme is the construction of
approximate gradient $\dNabla$ in each cell. First of all, considering the
porous media, we introduce the classical cell gradient, indicated with
$\nabla_K$, which is constant for each cell. Considering the function $v \in
\X$ we define
\begin{gather*}
    \nabla_K v \defeq \dfrac{1}{\abs{K}} \sum_{\sigma \in \edges_K} \abs{\sigma}
    \left( v_\sigma - v_K \right) \n_{K,\sigma}.
\end{gather*}
Furthermore we consider, for each cone $D_{K,\sigma} \subset K$, a stabilization term
\begin{gather*}
    R_{K,\sigma} v \defeq \dfrac{\alpha \sqrt{N}}{d_{K,\sigma}} \left[ v_\sigma
    - v_K - \nabla_K v \cdot \left( \vect{x}_\sigma - \vect{x}_K \right) \right],
\end{gather*}
where $\alpha \in \R^+$ is a stabilization parameter, in \cite{Eymard2010}
$\alpha=1$ while in \cite{Droniou2010} the stabilization parameter is a symmetric and
positive defined matrix. In our presentation we consider only a scalar
stabilization coefficient.
Finally the discrete gradient $\dNabla v$ for the cell $K$ is defined for each cone
$D_{K,\sigma}$ of $K$ as
\begin{gather*}
    \left. \dNabla v \right|_{D_{K,\sigma}} \defeq \nabla_K v + R_{K,\sigma} v
    \n_{K,\sigma}.
\end{gather*}
The approximation of the averages and jumps operators involves only the unknowns
defined on the faces of the cells, so their computation is straightforward.  We
still consider the same scheme for the approximation of the fault differential
operators. In this case $\nabla_{\ttau}$ is approximated by $\TT \dNabla$, \ie
given $D_{K,\sigma} \subset K \in \hat{\mesh}$ and $\hat{v} \in \Xhat$ then
$\left. \TT \dNabla \hat{v} \right|_{D_{K,\sigma}}
\defeq \TT \nabla_K \hat{v} + R_{K,\sigma} \hat{v} \n_{K,\sigma}$ with
\begin{gather*}
    R_{K,\sigma} \hat{v} = \dfrac{\hat{\alpha} \sqrt{N-1}}{d_{K,\sigma}} \left[
    \hat{v}_\sigma - \hat{v}_K
    - \TT \nabla_K \hat{v} \cdot \left( \vect{x}_\sigma - \vect{x}_K \right) \right],
\end{gather*}
with $\hat{\alpha} \in \R^+$ the stabilization parameter for the fault
discretization.  The discrete problem require to introduce a new bilinear form
for the differential discrete operators, namely
\begin{gather*}
    a_{\disc,\Omega} \left(p,v\right) \defeq \sum_{j} \left( \Lambda_j \dNabla p_j,
    \dNabla v_j \right)_{\Omega_j }
    \quad \text{and} \quad
    a_{\disc,\gamma} \left(\hat{p},\hat{v}\right) \defeq  \sum_{j} \left( \hat{\lambda}
    \TT \dNabla \phatj, \TT \dNabla \hat{v}_j \right)_{\gamhatj},\\
    a_{\disc} \left( \left( p, \hat{p} \right), \left( v, \hat{v} \right)
    \right) \defeq
        a_{\disc,\Omega} \left(p,v\right)+ a_{\disc,\gamma} \left(\hat{p},\hat{v}\right).
\end{gather*}
for $\left(p, \hat{p}\right) \in \Yz$ and $\left( v, \hat{v} \right) \in
\Yz$. The global discrete bilinear form is defined as
\begin{gather*}
    \bFormDisc{\left( p, \hat{p} \right)}{\left( v, \hat{v}\right)}
    \defeq a_{\disc} \left( \left( p, \hat{p} \right), \left( v, \hat{v}
    \right) \right) + cc \left( \left( p, \hat{p} \right), \left( v, \hat{v}
    \right) \right),
\end{gather*}
The weak formulation for the discrete problem \eqref{eq:red_problem}:
find $\left(p, \hat{p} \right) \in \Yz$ such that
\begin{gather} \label{eq:weak_disc_form}
    \bFormDisc{\left( p, \hat{p} \right)}{\left( v, \hat{v}\right)}
    = F\left(v, \hat{v} \right) \qquad \forall \left( v,
    \hat{v} \right) \in \Y.
\end{gather}
Following \cite{Eymard2010}, we introduce some useful results to prove the
convergence of the numerical scheme to the exact solution. Lemma
\ref{lemma:norm_equivalence} shows the equivalence of the semi-norm
\eqref{eq:discrete_semi-norms} to the $L^2$-norm of the discrete tangential
gradient, while Lemma \ref{lemma:weak_discrete_compactness} guarantees the weak
compactness of $\Vhat$ in the discrete topology. Let us start with the norm
equivalence.


\begin{lemma}[Norm equivalence] \label{lemma:norm_equivalence}
    Given $\hat{v} \in \Xhat$ then $\abs{\hat{v}}_{\Xhat} \lesssim
    \norm{\TT \dNabla \hat{v}}_{L^2(\gamhatiii)} \lesssim
    \abs{\hat{v}}_{\Xhat}$.
\end{lemma}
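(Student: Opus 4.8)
The plan is to adapt the analysis of the hybrid finite volume scheme of \cite{Eymard2010,Droniou2010}, reducing the claimed two‑sided bound to a cone‑by‑cone estimate on the fault mesh $\hat{\mesh}$. Since $\TT\dNabla\hat{v}$ is constant on each cone $D_{K,\sigma}$ ($K\in\hat{\mesh}$, $\sigma\in\edges_K$) and $\abs{D_{K,\sigma}}=\abs{\sigma}\,d_{K,\sigma}/(N-1)$, one has $\norm{\TT\dNabla\hat{v}}_{L^2(\gamhatiii)}^2=\sum_{K\in\hat{\mesh}}\sum_{\sigma\in\edges_K}\abs{D_{K,\sigma}}\,\bigl|\TT\nabla_K\hat{v}+R_{K,\sigma}\hat{v}\,\n_{K,\sigma}\bigr|^2$, while $\abs{\hat{v}}_{\Xhat}=\sum_{K\in\hat{\mesh}}\sum_{\sigma\in\edges_K}\frac{\abs{\sigma}}{d_{K,\sigma}}(\hat{v}_\sigma-\hat{v}_K)^2$. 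It therefore suffices to show, for each $K\in\hat{\mesh}$, that $\norm{\TT\dNabla\hat{v}}_{L^2(K)}^2$ is equivalent to $\sum_{\sigma\in\edges_K}\frac{\abs{\sigma}}{d_{K,\sigma}}(\hat{v}_\sigma-\hat{v}_K)^2$ with constants depending only on $N$, $\hat{\alpha}$ and $\theta$ (hence uniform over $\hat{\family}$), and then sum over $K$.

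The heart of the argument is the identity $\norm{\TT\dNabla\hat{v}}_{L^2(K)}^2=\abs{K}\,\abs{\nabla_K\hat{v}}^2+\sum_{\sigma\in\edges_K}\abs{D_{K,\sigma}}(R_{K,\sigma}\hat{v})^2$. To obtain it, note first that the in‑manifold normals $\n_{K,\sigma}$ span the tangent plane of $\gamhat$ along $K$, so $\nabla_K\hat{v}=\frac{1}{\abs{K}}\sum_{\sigma\in\edges_K}\abs{\sigma}(\hat{v}_\sigma-\hat{v}_K)\n_{K,\sigma}$ is already tangential and $\TT\nabla_K\hat{v}=\nabla_K\hat{v}$. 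Expanding the square cone by cone and using $\sum_{\sigma}\abs{D_{K,\sigma}}=\abs{K}$ produces, besides the two wanted terms, the cross term $2\,\nabla_K\hat{v}\cdot\sum_{\sigma\in\edges_K}\abs{D_{K,\sigma}}\,R_{K,\sigma}\hat{v}\,\n_{K,\sigma}$. Inserting the definition of $R_{K,\sigma}\hat{v}$, using $\abs{D_{K,\sigma}}=\abs{\sigma}d_{K,\sigma}/(N-1)$, the definition of $\nabla_K\hat{v}$, and the discrete Stokes identity $\sum_{\sigma\in\edges_K}\abs{\sigma}\,(\vect{x}_\sigma-\vect{x}_K)\otimes\n_{K,\sigma}=\abs{K}\,\eye$ on the tangent plane of $\gamhat$, a short computation gives $\sum_{\sigma\in\edges_K}\abs{D_{K,\sigma}}\,R_{K,\sigma}\hat{v}\,\n_{K,\sigma}=\frac{\hat{\alpha}}{\sqrt{N-1}}\bigl(\abs{K}\nabla_K\hat{v}-\abs{K}\nabla_K\hat{v}\bigr)=\vect{0}$, so the cross term vanishes. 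This is exact when $\gamhat$ is flat over $K$; for the curved, merely piecewise‑$C^2$ centre line both the tangentiality of $\nabla_K\hat{v}$ and the Stokes identity carry an $\Order{h_\disc}$ defect, controlled by Hypotheses \ref{hyp:normal} together with the $C^2$ regularity of $\gamhat$, which for $h_\disc$ small is absorbed via Young's inequality, leaving $\norm{\TT\dNabla\hat{v}}_{L^2(K)}^2$ equivalent to $\abs{K}\abs{\nabla_K\hat{v}}^2+\sum_{\sigma\in\edges_K}\abs{D_{K,\sigma}}(R_{K,\sigma}\hat{v})^2$.

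It remains to compare $\abs{K}\abs{\nabla_K\hat{v}}^2+\sum_{\sigma\in\edges_K}\abs{D_{K,\sigma}}(R_{K,\sigma}\hat{v})^2$ with $\sum_{\sigma\in\edges_K}\frac{\abs{\sigma}}{d_{K,\sigma}}(\hat{v}_\sigma-\hat{v}_K)^2$, which is routine. For one direction, Cauchy--Schwarz in the definition of $\nabla_K\hat{v}$ together with $\sum_{\sigma\in\edges_K}\abs{\sigma}d_{K,\sigma}=(N-1)\abs{K}$ gives $\abs{K}\abs{\nabla_K\hat{v}}^2\lesssim\sum_{\sigma\in\edges_K}\frac{\abs{\sigma}}{d_{K,\sigma}}(\hat{v}_\sigma-\hat{v}_K)^2$; then $\abs{R_{K,\sigma}\hat{v}}\lesssim d_{K,\sigma}^{-1}\bigl(\abs{\hat{v}_\sigma-\hat{v}_K}+\abs{\nabla_K\hat{v}}\,h_K\bigr)$ with $h_K\le\theta\,d_{K,\sigma}$ yields $\abs{D_{K,\sigma}}(R_{K,\sigma}\hat{v})^2\lesssim\frac{\abs{\sigma}}{d_{K,\sigma}}(\hat{v}_\sigma-\hat{v}_K)^2+\abs{\sigma}d_{K,\sigma}\abs{\nabla_K\hat{v}}^2$, whose sum over $\sigma$ is again $\lesssim\sum_{\sigma\in\edges_K}\frac{\abs{\sigma}}{d_{K,\sigma}}(\hat{v}_\sigma-\hat{v}_K)^2$. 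For the reverse direction, the definition of $R_{K,\sigma}$ reads $\hat{v}_\sigma-\hat{v}_K=\nabla_K\hat{v}\cdot(\vect{x}_\sigma-\vect{x}_K)+\frac{d_{K,\sigma}}{\hat{\alpha}\sqrt{N-1}}R_{K,\sigma}\hat{v}$, so $\abs{\vect{x}_\sigma-\vect{x}_K}\le h_K\le\theta\,d_{K,\sigma}$ gives $\frac{\abs{\sigma}}{d_{K,\sigma}}(\hat{v}_\sigma-\hat{v}_K)^2\lesssim\abs{\sigma}d_{K,\sigma}\abs{\nabla_K\hat{v}}^2+\abs{D_{K,\sigma}}(R_{K,\sigma}\hat{v})^2$; summing over $\sigma$ and using once more $\sum_{\sigma\in\edges_K}\abs{\sigma}d_{K,\sigma}=(N-1)\abs{K}$ closes the chain. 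Summation over $K\in\hat{\mesh}$ gives the two inequalities of the statement.

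The main obstacle is the vanishing of the cross term between the consistent part $\nabla_K\hat{v}$ and the stabilization $R_{K,\sigma}\hat{v}$: in the classical flat hybrid finite volume setting it is identically zero thanks to the exact discrete Stokes identities, but on the curved, merely piecewise‑$C^2$ centre line $\gamhat$ the relevant identities — and the exact tangentiality of $\nabla_K\hat{v}$ — carry $\Order{h_\disc}$ defects, and verifying that Hypotheses \ref{hyp:normal} and the regularity of $\gamhat$ keep those defects uniformly controlled over $\hat{\family}$ is the only delicate point. Everything else is Cauchy--Schwarz together with the mesh‑quality bound $h_K\le\theta\,d_{K,\sigma}$ and the elementary geometric identity $\sum_{\sigma\in\edges_K}\abs{\sigma}d_{K,\sigma}=(N-1)\abs{K}$.
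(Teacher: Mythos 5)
Your proposal is correct and follows essentially the same route as the paper: both rest on the Pythagorean decomposition $\norm{\TT\dNabla\hat{v}}_{L^2(K)}^2=\abs{K}\abs{\TT\nabla_K\hat{v}}^2+\sum_{\sigma\in\edges_K}\frac{\abs{\sigma}d_{K,\sigma}}{N-1}(R_{K,\sigma}\hat{v})^2$ (which the paper asserts without comment and you justify via the discrete Stokes identity), then use Cauchy--Schwarz with $\sum_{\sigma}\abs{\sigma}d_{K,\sigma}=(N-1)\abs{K}$ for the upper bound and the splitting of $\hat{v}_\sigma-\hat{v}_K$ into consistent plus residual parts for the lower bound, the only cosmetic difference being that the paper runs the lower bound through the parametrized inequality $(a-b)^2\geq\frac{\lambda}{1+\lambda}a^2-\lambda b^2$ with $\lambda^{-1}=\hat{\alpha}^2(N-1)\theta_{\hat{\disc}}^2$ where you use a plain Young inequality. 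Your extra care about $\Order{h_\disc}$ curvature defects in the cross term is harmless but not needed if, as is implicit in the paper, the cells of $\hat{\mesh}$ are flat facets, on which the orthogonality is exact.
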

\begin{proof}
    Considering that $(a-b)^2\geq \lambda/(1+\lambda) a^2 - \lambda b^2$, for
    $a,b \in \R$ and $\lambda > -1$, we have
    \begin{gather*}
        \norm{\TT \dNabla \hat{v}}^2_{L^2(\gamhatiii)} = \sum_{K \in
        \hat{\mesh}} \abs{K} \abs{ \TT \nabla_K \hat{v} }^2 + \sum_{\sigma \in
        \edges_K} \dfrac{\abs{\sigma} d_{K,\sigma}}{N-1} \br{R_{K,\sigma}
        \hat{v}}^2 \geq \\
        \geq \sum_{K \in \hat{\mesh}} \abs{K} \left[ 1 - \hat{\alpha}^2 \lambda \br{N-1}
        \theta_{\hat{\disc}}^2 \right] \abs{\TT \nabla_K \hat{v}}^2 +
        \dfrac{\lambda \hat{\alpha}^2}{1+\lambda}
        \sum_{\sigma \in \edges_K} \dfrac{\abs{\sigma}}{d_{K,\sigma}} \br{
        \hat{v}_\sigma - \hat{v}_K}^2
    \end{gather*}
    where we have considered \eqref{eq:quality_mesh} for $\theta_{\hat{\disc}}$.
    Choosing the parameter $\lambda^{-1} = \hat{\alpha}^2\br{N-1} \theta_{\hat{\disc}}^2$ we
    obtain $\norm{\TT \dNabla \hat{v}}_{L^2(\gamhatiii)} \gtrsim
    \abs{\hat{v}}_{\Xhat}$. Moreover, given $K \in \hat{\mesh}$, we have
    \begin{gather*}
        \abs{\TT \nabla_K \hat{v}}^2 \leq \dfrac{1}{\abs{K}^2} \sum_{\sigma \in
        \edges_K} \dfrac{\abs{\sigma}}{d_{K,\sigma}} \abs{\hat{v}_\sigma -
        \hat{v}_K}^2 \sum_{\sigma \in \edges_K} \abs{\sigma} d_{K,\sigma}
        \abs{\TT \n_{K,\sigma}}^2 = \dfrac{N-1}{\abs{K}} \sum_{\sigma \in
        \edges_K} \dfrac{\abs{\sigma}}{d_{K,\sigma}} \abs{\hat{v}_\sigma -
        \hat{v}_K}^2,
    \end{gather*}
    while the stabilization term is
    \begin{gather*}
        \abs{R_{K,\sigma} \hat{v}}^2 \leq \hat{\alpha}^2 \br{N-1} \br{
        \dfrac{\abs{\hat{v}_\sigma -\hat{v}_K}^2}{d_{K,\sigma}^2} +
        \dfrac{\abs{\TT \nabla_K \hat{v}}^2}{d_{K,\sigma}^2}
        \abs{\vect{x}_\sigma - \vect{x}_K}^2} \leq \\
        \leq \hat{\alpha}^2 \br{N-1} \br{
        \dfrac{\abs{\hat{v}_\sigma -\hat{v}_K}^2}{d_{K,\sigma}^2} +
        \abs{\TT \nabla_K \hat{v}}^2
        \theta_{\hat{\disc}}^2 },
    \end{gather*}
    obtaining the other inequality $\norm{\TT \dNabla
    \hat{v}}^2_{L^2(\gamhatiii)} \lesssim \abs{\hat{v}}_{\Xhat}$.
\end{proof}


We show now the goodness of the proposed discrete tangential gradient, which
weakly converge to the continuous tangential gradient in the discrete topology.


\begin{lemma}[Weak discrete $\Vhat$ compactness]\label{lemma:weak_discrete_compactness}
    We consider the family of functions $\br{ \hat{v}_{\hat{\disc}}
    }_{\hat{\disc} \in \hat{\family}}$ and we suppose that:
    $\hat{v}_{\hat{\disc}} \in \Xhatz$,
    $\abs{\hat{v}_{\hat{\disc}}}_{\Xhat} \lesssim 1$ and exists a function
    $\hat{v} \in
    L^2(\gamhatiii)$ such that $\Pi_{\hat{\mesh}} \hat{v}_{\hat{\disc}}
    \rightarrow
    \hat{v} $ in $L^2(\gamhatiii)$ as $h_\disc \rightarrow 0$.
    Then $\hat{v} \in \Vhat$ and $\TT \dNabla \hat{v}_{\hat{\disc}}
    \rightharpoonup \nabla_{\ttau} \hat{v}$ in $L^2(\gamhatiii)$ as
    $h_\disc \rightarrow 0$.
\end{lemma}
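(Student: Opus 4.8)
The plan is to follow the now-standard discrete functional-analysis argument of Eymard--Gallouët--Herbin adapted to the tangential-gradient operator on the fault manifold. The proof has two halves: first identify the limit $\hat v$ as an element of $\Vhat$, and second show that the discrete tangential gradients converge weakly to $\nabla_{\ttau}\hat v$. For the second half I would not try to establish weak convergence directly; instead I would first extract a weak limit and then identify it.

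First I would note that, by Lemma~\ref{lemma:norm_equivalence}, the bound $\abs{\hat v_{\hat\disc}}_{\Xhat}\lesssim 1$ gives $\norm{\TT\dNabla\hat v_{\hat\disc}}_{L^2(\gamhatiii)}\lesssim 1$, so the family $\TT\dNabla\hat v_{\hat\disc}$ is bounded in $[L^2(\gamhatiii)]^{N-1}$ (it is tangential, hence lives in an $(N-1)$-dimensional bundle, up to the $\Order{h_\disc}$ normal discrepancy of Hypothesis~\ref{hyp:normal}). By weak compactness of bounded sets in a Hilbert space there is a subsequence and a limit field $\bm G\in[L^2(\gamhatiii)]^{N-1}$ with $\TT\dNabla\hat v_{\hat\disc}\rightharpoonup\bm G$. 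The crux is then to show $\bm G=\nabla_{\ttau}\hat v$ in the distributional sense on each layer $\gamhatj$, which simultaneously proves $\hat v_j\in\Vhat_j$ (its distributional tangential gradient is in $L^2$) and $\bm G_j=\nabla_{\ttau}\hat v_j$; the boundary condition $\hat v_j|_{\partial\gamhatj}=0$ follows from passing to the limit in the discrete boundary condition $\hat v_\sigma=0$ on $\hat\edges_{\rm ext}$, e.g.\ via a discrete trace/Poincaré estimate. Since every subsequence has the same limit, the whole family converges.

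The identification step is the main obstacle and the place where Hypothesis~\ref{hyp:normal} enters. For a fixed smooth tangential test field $\bm\varphi$ (compactly supported in a chart of $\gamhatj$), I would write $\int_{\gamhatj}\TT\dNabla\hat v_{\hat\disc}\cdot\bm\varphi$, decompose $\TT\dNabla\hat v_{\hat\disc}|_{D_{K,\sigma}}=\TT\nabla_K\hat v+R_{K,\sigma}\hat v\,\n_{K,\sigma}$, and perform discrete integration by parts cell by cell. The cell-gradient term, using the definition of $\nabla_K$, reassembles into a sum over faces of the form $\sum_\sigma\abs{\sigma}(\hat v_\sigma-\hat v_K)\,\n_{K,\sigma}\cdot\bm\varphi(\vect x_\sigma)$, which—after the usual manipulation exploiting $\n_{K,\sigma}+\n_{L,\sigma}=\Order{h_\disc}$ from Hypothesis~\ref{hyp:normal} and the consistency $\abs{\sigma}\n_{K,\sigma}\approx\int_\sigma\n$—converges to $-\int_{\gamhatj}\hat v\,\divt\bm\varphi=\int_{\gamhatj}\nabla_{\ttau}\hat v\cdot\bm\varphi$ once we know the result is a distributional gradient. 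The stabilization contribution $\sum_{K,\sigma}\abs{\sigma}d_{K,\sigma}R_{K,\sigma}\hat v\,(\n_{K,\sigma}\cdot\bm\varphi)$ is controlled by Cauchy--Schwarz: it is bounded by $\big(\sum\abs{\sigma}d_{K,\sigma}(R_{K,\sigma}\hat v)^2\big)^{1/2}\big(\sum\abs{\sigma}d_{K,\sigma}\abs{\bm\varphi}^2\big)^{1/2}$, where the first factor is $\lesssim\abs{\hat v_{\hat\disc}}_{\Xhat}\lesssim1$ (as in the proof of Lemma~\ref{lemma:norm_equivalence}) and the second is $\lesssim h_\disc\norm{\bm\varphi}_{L^2}\to0$; hence the stabilization term vanishes in the limit. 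The technical care needed is in handling the geometry of the piecewise-$C^2$ manifold $\gamhatj$ and the non-matching layers, but because the test field is taken in one layer at a time and Hypothesis~\ref{hyp:normal} guarantees the normals are asymptotically well defined, the standard estimates of \cite{Eymard2010} carry over with only cosmetic changes.

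Finally I would assemble the pieces: the weak limit $\bm G_j$ satisfies $\int_{\gamhatj}\bm G_j\cdot\bm\varphi=-\int_{\gamhatj}\hat v_j\,\divt\bm\varphi$ for all smooth compactly supported tangential $\bm\varphi$, so $\nabla_{\ttau}\hat v_j=\bm G_j\in[L^2(\gamhatj)]^{N-1}$; together with $\hat v_j\in L^2(\gamhatj)$ and the homogeneous boundary trace this gives $\hat v\in\Vhat$, and $\TT\dNabla\hat v_{\hat\disc}\rightharpoonup\nabla_{\ttau}\hat v$ in $L^2(\gamhatiii)$ along the subsequence; uniqueness of the limit upgrades this to the full family, completing the proof.
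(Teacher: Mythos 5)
Your proposal is correct and follows essentially the same route as the paper: boundedness of $\TT\dNabla\hat{v}_{\hat{\disc}}$ via Lemma \ref{lemma:norm_equivalence}, extraction of a weak $L^2$ limit $\vect{G}$, identification of $\vect{G}$ with the distributional tangential gradient by splitting the discrete gradient into cell-gradient and stabilization parts, discrete integration by parts against a smooth test field using Hypotheses \ref{hyp:normal}, and a Cauchy--Schwarz argument showing the stabilization contribution is $\Order{h_\disc}$. The only (cosmetic) differences are that the paper extends the functions by zero to $\R^N$ rather than working in charts on $\gamhatiii$, and that you additionally spell out the recovery of the homogeneous boundary trace, which the paper leaves implicit.
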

\begin{proof}
    Since we are dealing with surface problems, we prolong $\Pi_{\hat{\mesh}}
    \hat{v}_{\hat{\disc}}$ and $\TT \dNabla \hat{v}_{\hat{\disc}}$ by 0 in
    $\R^N$ outside of $\gamhatiii$. Thanks to the boundedness of $\br{ \TT \dNabla
    \hat{p}_{\hat{\disc}} }_{\hat{\disc} \in \hat{\family}}$ then, since
    $L^2(\R^N)$ is a reflexive Banach space, applying the
    Banach-Alaoglu theorem there exists a sub-sequence, still denoted by $\br{
    \TT \dNabla \hat{p}_{\hat{\disc}} }_{\hat{\disc} \in\hat{\family}}$, which
    weakly converge to a $\vect{G} \in \left[L^2(\R^N)\right]^N$. We have to
    show that $\left. \vect{G} \right|_{\gamhatiii} = \nabla_{\ttau} \hat{v}$.
    Let us set, with $\vect{\psi} \in \R^N$, the following
    $\br{\TT \dNabla \hat{v}_{\hat{\disc}}, \vect{\psi} }_{\R^N} = T_2 + T_3$
    with
    \begin{gather*}
        T_2 = \sum_{K \in \hat{\mesh}} \br{\TT \nabla_K \hat{v}_{\hat{\disc}},
        \vect{\psi}}_K
        \quad \text{and} \quad
        T_3 = \sum_{K \in \hat{\mesh}} \sum_{\sigma \in \edges_K}
        \br{R_{K,\sigma} \hat{v}_{\hat{\sigma}},
        \vect{\psi} \cdot \n_{K,\sigma}}_{D_{K,\sigma}}.
    \end{gather*}
    We define $\vect{\psi}_K = \int_K \vect{\psi}/\abs{K}$ and
    $\vect{\psi}_\sigma = \int_\sigma \vect{\psi}/\abs{\sigma}$, then we have
    \begin{gather*}
        T_2 = \sum_{K \in \hat{\mesh}} \dfrac{1}{\abs{K}} \sum_{\sigma \in
        \edges_K} \left( \hat{v}_\sigma - \hat{v}_K \right) \TT \n_{K,\sigma}
        \cdot \int_{D_{K,\sigma}} \vect{\psi} = \sum_{K \in \hat{\mesh}}
        \vect{\psi}_K \cdot \br{\sum_{\sigma \in \edges_K}
        \br{ \hat{v}_\sigma - \hat{v}_K } \n_{K,\sigma}},
    \end{gather*}
    since $\TT \n_{K,\sigma} = \n_{K,\sigma}$. We consider also the following
    term and using Hypotheses \ref{hyp:normal}
    \begin{gather*}
        T_4 = - \br{\hat{v}_{\hat{\disc}}, \nabla_{\ttau} \cdot
        \vect{\psi}}_{\R^N} = - \sum_{K \in \hat{\mesh}} \sum_{\sigma \in
        \edges_K} \br{ \hat{v}_K, \vect{\psi} \cdot \n_{K,\sigma}
        }_{\sigma} = - \sum_{K \in \hat{\mesh}} \sum_{\sigma \in\edges_K}
        \abs{\sigma} \hat{v}_K  \vect{\psi}_\sigma \cdot \n_{K,\sigma}
        \\
        = \sum_{K \in \hat{\mesh}} \sum_{\sigma \in
        \edges_K} \abs{\sigma} \br{\hat{v}_\sigma - \hat{v}_K}
        \vect{\psi}_\sigma \cdot \n_{K,\sigma}+ \Order{h_\disc} = T_5 +
        \Order{h_\disc}.
    \end{gather*}
    We show now that $T_2 = T_4$ for $h_\disc \rightarrow 0$, in fact we have
    \begin{gather*}
        \br{T_2 - T_5}^2 = \br{ \sum_{K \in \hat{\mesh}} \sum_{\sigma \in
        \edges_K} \abs{\sigma}
        \br{\hat{v}_\sigma - \hat{v}_K} \br{\vect{\psi}_K - \vect{\psi}_\sigma}
        \cdot \n_{K,\sigma}}^2 \leq \\ \leq
        \sum_{K \in \hat{\mesh}} \sum_{\sigma \in
        \edges_K} \dfrac{\abs{\sigma}}{d_{K,\sigma}} \br{\hat{v}_\sigma -
        \hat{v}_K}^2 \sum_{K \in \hat{\mesh}} \sum_{\sigma \in\edges_K}
        \abs{\sigma} d_{K,\sigma} \norm{\vect{\psi}_K
        -\vect{\psi}_\sigma}^2_{\R^N} \lesssim \abs{\hat{v}_{\hat{\disc}}}_{\Xhat}^2
        \Order{h_\disc^2},
    \end{gather*}
    thanks to the uniform boundedness of the semi-norm of
    $\hat{v}_{\hat{\disc}}$ we have the convergence as $h_\disc \rightarrow 0$.
    The last step is to show that the stabilization term vanishes as $h_\disc
    \rightarrow 0$, in fact we have
    \begin{gather*}
        T_3 = \sum_{K \in \hat{\mesh}} \sum_{\sigma \in\edges_K} R_{K,\sigma}
        \hat{v}_{\hat{\disc}} \int_{D_{K,\sigma}} \vect{\psi} \cdot
        \n_{K,\sigma} = \\ =\sum_{K \in \hat{\mesh}} \sum_{\sigma \in\edges_K}
        R_{K,\sigma} \hat{v}_{\hat{\disc}} \br{ \int_{D_{K,\sigma}} \vect{\psi}
        -  \dfrac{d_{K,\sigma} \abs{\sigma}}{\br{N-1}\abs{D_{K,\sigma}}}
        \int_{D_{K,\sigma}} \vect{\psi}_K} \cdot \n_{K,\sigma} = \\ =\sum_{K \in
        \hat{\mesh}} \sum_{\sigma \in\edges_K} R_{K,\sigma}
        \hat{v}_{\hat{\disc}} \int_{D_{K,\sigma}} \br{\vect{\psi} -
        \vect{\psi}_K} \cdot \n_{K,\sigma},
    \end{gather*}
    finally considering the square of $T_3$ and the mean value theorem
    we can end up with the proof, \ie
    \begin{gather*}
        \br{T_3}^2 \leq \sum_{K \in \hat{\mesh}} \sum_{\sigma \in\edges_K}
        d_{K,\sigma} \dfrac{\abs{\sigma}}{N-1}
        \br{R_{K,\sigma} \hat{v}_{\hat{\disc}}}^2
        \sum_{K \in \hat{\mesh}} \sum_{\sigma \in\edges_K}
        \dfrac{N-1}{d_{K,\sigma} \abs{\sigma}}
        \br{ \int_{D_{K,\sigma}}
        \br{\vect{\psi} -\vect{\psi}_K} \cdot \n_{K,\sigma}}^2 \leq \\ \lesssim
        \abs{\hat{v}_{\hat{\disc}}}_{\Xhat}^2 \sum_{K \in \hat{\mesh}}
        \sum_{\sigma \in\edges_K} \dfrac{N-1}{d_{K,\sigma} \abs{\sigma}}
        \int_{D_{K,\sigma}} \norm{\vect{\psi} -\vect{\psi}_K}^2_{\R^N} \lesssim
        \Order{h_\disc^2}.
    \end{gather*}
\end{proof}


We prove the consistency of the discrete tangential gradient, \ie the maximum
error between the latter and the tangential gradient vanishes as $h_\disc
\rightarrow 0$.


\begin{lemma}[Discrete tangential gradient consistency]\label{lemma:tangential_gradient_consistency}
    Given a function $\phi \in C^2\br{\gamhatiii}$ then
    \begin{gather*}
        \norm{ \TT \dNabla P_{\hat{\disc}} \phi -
        \nabla_{\ttau} \phi }_{L^\infty(\gamhatiii)} \leq \Order{h_\disc}.
    \end{gather*}
\end{lemma}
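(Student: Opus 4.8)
The plan is to bound the error on each cone $D_{K,\sigma}$, with $K\in\hat{\mesh}$ and $\sigma\in\edges_K$, and then pass to the supremum. On such a cone one has, by definition, $\left.\TT\dNabla P_{\hat{\disc}}\phi\right|_{D_{K,\sigma}} = \TT\nabla_K P_{\hat{\disc}}\phi + R_{K,\sigma}(P_{\hat{\disc}}\phi)\,\n_{K,\sigma}$, so it suffices to estimate the cell-gradient part and the stabilization part separately and to compare them with $\nabla_{\ttau}\phi$ evaluated at a generic point of $D_{K,\sigma}$. Throughout I would use the mesh-regularity bounds $h_K/d_{K,\sigma}\le\theta$, $|\vect{x}_\sigma-\vect{x}_K|\le h_K$, and $\sum_{\sigma\in\edges_K}|\sigma| = (N-1)\sum_{\sigma\in\edges_K}|D_{K,\sigma}|/d_{K,\sigma}\le (N-1)\,\theta\,|K|/h_K$.

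First I would treat the cell gradient. Writing $\hat{w}\defeq P_{\hat{\disc}}\phi$, a second-order Taylor expansion of (a $C^2$ extension of) $\phi$ about $\vect{x}_K$ gives $\hat{w}_\sigma-\hat{w}_K = \nabla_{\ttau}\phi(\vect{x}_K)\cdot(\vect{x}_\sigma-\vect{x}_K) + r_{K,\sigma}$ with $|r_{K,\sigma}|\lesssim h_K^2$; here the piecewise-$C^2$ regularity of $\gamhatiii$ is used, so that the normal component of $\vect{x}_\sigma-\vect{x}_K$ is $\Order{h_K^2}$ and contributes only to $r_{K,\sigma}$. Inserting this into $\nabla_K\hat{w} = |K|^{-1}\sum_{\sigma\in\edges_K}|\sigma|(\hat{w}_\sigma-\hat{w}_K)\n_{K,\sigma}$ and using the discrete divergence identity $\sum_{\sigma\in\edges_K}|\sigma|(\vect{x}_\sigma-\vect{x}_K)\otimes\n_{K,\sigma} = |K|\,\TT_K$ --- where $\TT_K$ is the orthogonal projector onto the hyperplane of $K$, which satisfies $\TT_K = \TT + \Order{h_\disc}$ by Hypotheses~\ref{hyp:normal} and shape regularity --- one gets $\nabla_K\hat{w} = \TT_K\nabla_{\ttau}\phi(\vect{x}_K) + |K|^{-1}\sum_{\sigma\in\edges_K}|\sigma|r_{K,\sigma}\n_{K,\sigma}$. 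The bound on $\sum_{\sigma}|\sigma|$ shows the last sum is $\Order{h_\disc}$, whence $\TT\nabla_K\hat{w} = \nabla_{\ttau}\phi(\vect{x}_K) + \Order{h_\disc}$.

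Next I would bound the stabilization term. Plugging the previous estimate and the Taylor expansion into the definition of $R_{K,\sigma}$, its bracket $\hat{w}_\sigma-\hat{w}_K - \TT\nabla_K\hat{w}\cdot(\vect{x}_\sigma-\vect{x}_K)$ equals $r_{K,\sigma} - \big(\TT\nabla_K\hat{w}-\nabla_{\ttau}\phi(\vect{x}_K)\big)\cdot(\vect{x}_\sigma-\vect{x}_K) = \Order{h_K^2}$, so that $|R_{K,\sigma}\hat{w}|\lesssim h_K^2/d_{K,\sigma}\le\theta\,h_K\lesssim h_\disc$. Finally, on $D_{K,\sigma}$, I would write $\TT\dNabla\hat{w} - \nabla_{\ttau}\phi = \big(\TT\nabla_K\hat{w}-\nabla_{\ttau}\phi(\vect{x}_K)\big) + \big(\nabla_{\ttau}\phi(\vect{x}_K)-\nabla_{\ttau}\phi(\vect{x})\big) + R_{K,\sigma}\hat{w}\,\n_{K,\sigma}$: the first and third terms are $\Order{h_\disc}$ by the above, and the second is $\Order{h_K}$ because $\nabla_{\ttau}\phi$ is Lipschitz on $\gamhatiii$ (as $\phi\in C^2$) and $|\vect{x}-\vect{x}_K|\le h_K$ for $\vect{x}\in D_{K,\sigma}\subset K$. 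Taking the supremum over $K\in\hat{\mesh}$ and $\sigma\in\edges_K$ gives the claim.

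The hard part will be the geometric bookkeeping on the curved fault manifold rather than the analysis: establishing the discrete divergence identity in its tangential form, controlling the discrepancy $\TT_K-\TT$ through Hypotheses~\ref{hyp:normal}, and checking that all mesh-dependent quantities ($|\sigma|$, $|K|$, $d_{K,\sigma}$, $|D_{K,\sigma}|$, $|\vect{x}_\sigma-\vect{x}_K|$) are controlled uniformly by the single parameter $\theta$. Once these are in place, the estimate is the classical Taylor-expansion consistency argument for the hybrid finite volume gradient reconstruction, exactly as in \cite{Eymard2010}.
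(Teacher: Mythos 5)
Your proposal is correct and follows essentially the same route as the paper's proof: a second-order Taylor expansion of $\phi$ about $\vect{x}_K$ inserted into the cell-gradient formula, followed by the bound $\abs{R_{K,\sigma} P_{\hat{\disc}}\phi} \leq \Order{h_K^2}/d_{K,\sigma} \leq \Order{h_K}$ for the stabilization term. The only difference is that you spell out the geometric bookkeeping (the discrete divergence identity, the discrepancy between the cell projector and $\TT$, and the final Lipschitz comparison of $\nabla_{\ttau}\phi$ at $\vect{x}_K$ versus a generic point of the cone) that the paper's terser proof leaves implicit.
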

\begin{proof}
    For each cell $K \in \hat{\mesh}$ we have
    \begin{gather*}
        \TT \nabla_K P_{\hat{\disc}} \hat{\phi} = \dfrac{1}{\abs{K}} \sum_{\sigma \in
        \edges_K} \abs{\sigma} \br{ \nabla_{\ttau} \hat{\phi} \br{\vect{x}_K} \cdot
        \br{\vect{x}_\sigma - \vect{x}_K} + \Order{h_K^2} } \n_{K,\sigma},
    \end{gather*}
    then $\abs{ \TT \nabla_K \hat{P}_{\disc} \phi - \nabla_{\ttau} \phi
    \br{\vect{x}_K} } \leq \Order{h_K}$. Moreover the stabilization term goes
    similarly
    \begin{gather*}
        \abs{R_{K,\sigma} P_{\hat{\disc}} \hat{\phi}} =
        \dfrac{\sqrt{N-1}}{d_{K,\sigma}} \abs{\phi\br{\vect{x}_\sigma} -
        \phi\br{\vect{x}_K} - \TT \nabla_K \hat{P}_{\disc} \hat{\phi} \cdot
        \br{\vect{x}_\sigma - \vect{x}_K}} \leq
        \dfrac{\Order{h_K^2}}{d_{K,\sigma}} \leq \Order{h_K}.
    \end{gather*}
\end{proof}


We are ready to introduce the main result of this section, which shows the
correctness of the chosen approximation: convergence of both discrete porous medium
and fault pressures to the exact solution of continuous problem
\eqref{eq:weak_form}. A priori bound is given obtaining the well posedness of the
discrete problem \eqref{eq:weak_disc_form}.


\begin{lemma}[Convergence for matching grids] \label{lemma:convergence_matching}
    We suppose that the discretization of the two layers of the fault is
    matching.
    Let us consider the family of functions $\left( p_\disc, \hat{p}_{\hat{\disc}}
    \right) \in \Yz$, with $\disc \in \family$ and $\hat{\disc} \in \hat{\family}$,
    satisfies \eqref{eq:weak_disc_form} for each choice of
    discretization, then
    \begin{gather*}
        \lim_{h_\disc \rightarrow 0} \norm{\Pi_\mesh p_\disc - p
        }_{L^2(\Omega_{1,2})} = 0
        \quad \text{ and } \quad
        \lim_{h_\disc \rightarrow 0} \norm{\Pi_{\hat{\mesh}}
        \hat{p}_{\hat{\disc}} - \hat{p} }_{L^2(\gamhatiii)} =0,
    \end{gather*}
    where $(p,\hat{p}) \in \V_0 \times \Vhat_0$ is the unique solution of \eqref{eq:weak_form}.
    Moreover we have $\abs{\left(p_\disc, \hat{p}_{\hat{\disc}}
    \right)}_{\Y} \lesssim 1$.
\end{lemma}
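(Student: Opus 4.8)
The proof follows the classical framework for finite-volume convergence (as in \cite{Eymard2010,Droniou2010}), combining an a priori estimate, compactness, identification of the limit, and a uniqueness argument. The hard part is handling the coupling terms between porous medium and fault and, since the two layers are here assumed matching, the argument is a relatively direct adaptation of the single-layer case; the genuinely delicate passage is showing that the limit of the discrete coupling form reproduces $\ccform{\cdot}{\cdot}$, which hinges on the trace convergence of $\Pi_\mesh p_\disc$ on $\gamhat$ together with Hypothesis~\ref{hyp:normal}.

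\textbf{Step 1: a priori bound.} Test \eqref{eq:weak_disc_form} with $(v,\hat v)=(p_\disc,\hat p_{\hat\disc})$. By Lemma~\ref{lemma:norm_equivalence} and the analogous (standard) equivalence on the porous domain, together with the uniform mesh-quality bound $\theta$, one gets coercivity of $a_\disc$ with respect to $\abs{\cdot}_{\Y}$; the form $cc(\cdot,\cdot)$ is positive (its diagonal structure, exactly as noted in the proof of the continuous well-posedness lemma). Using a discrete Poincaré inequality to control $\norm{\Pi_\mesh p_\disc}_{L^2(\Omega_{1,2})}$ and $\norm{\Pi_{\hat\mesh}\hat p_{\hat\disc}}_{L^2(\gamhatiii)}$ by the semi-norms, and the continuity of $F$, yields $\abs{(p_\disc,\hat p_{\hat\disc})}_{\Y}\lesssim 1$, which is the last assertion of the lemma and also gives well-posedness of \eqref{eq:weak_disc_form} since the square linear system is then injective.

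\textbf{Step 2: compactness and limit gradients.} From the uniform bound on $\abs{p_\disc}_{\X}$ and $\abs{\hat p_{\hat\disc}}_{\Xhat}$, a discrete Rellich–Kondrachov theorem (Eymard–Gallouët–Herbin type) gives, up to a subsequence, $\Pi_\mesh p_\disc\to p$ in $L^2(\Omega_{1,2})$ and, by Lemma~\ref{lemma:weak_discrete_compactness}, $\Pi_{\hat\mesh}\hat p_{\hat\disc}\to\hat p$ in $L^2(\gamhatiii)$ with $\hat p\in\Vhat$ and $\TT\dNabla\hat p_{\hat\disc}\rightharpoonup\nabla_\ttau\hat p$; likewise $p\in\V$ and $\dNabla p_\disc\rightharpoonup\nabla p$ on $\Omega_{1,2}$ (the analogue of Lemma~\ref{lemma:weak_discrete_compactness} on the bulk, which is exactly \cite{Eymard2010}). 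One must also check that the boundary conditions pass to the limit, i.e. $p\in\V_0$, $\hat p\in\Vhat_0$, which is standard for $\Xz,\Xhatz$.

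\textbf{Step 3: identify the limit as the weak solution and conclude.} Fix smooth test functions $\varphi\in C^\infty_0$, $\hat\varphi\in C^\infty$, set $v=P_\disc\varphi$, $\hat v=\hat P_\disc\hat\varphi$ in \eqref{eq:weak_disc_form}. For the diffusive terms, $\dNabla P_\disc\varphi\to\nabla\varphi$ and $\TT\dNabla\hat P_\disc\hat\varphi\to\nabla_\ttau\hat\varphi$ strongly by Lemma~\ref{lemma:tangential_gradient_consistency} (and its bulk analogue), so the weak convergence of the discrete gradients of the solution suffices to pass $a_\disc\to a$. For the coupling form, the face unknowns of $P_\disc\varphi$ are nodal values of $\varphi$ on $\gamhat$ and converge to the traces; using Hypothesis~\ref{hyp:normal} and $\theta$ one shows the discrete averages/jumps of $p_\disc$ converge (in $L^2(\gamhat)$) to the traces of $p$, whence $cc\to \ccform{\cdot}{\cdot}$. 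Therefore $(p,\hat p)$ satisfies \eqref{eq:weak_form}. By the continuous well-posedness lemma the limit is unique, so the whole family (not just a subsequence) converges, which gives the two claimed limits. The main obstacle, as flagged, is the rigorous treatment of the trace/coupling convergence on the non-smooth interface $\gamhat$ under Hypothesis~\ref{hyp:normal}; everything else is a transcription of the hybrid finite-volume machinery of \cite{Eymard2010}.
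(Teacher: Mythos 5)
Your proposal follows essentially the same route as the paper: an a priori bound via coercivity of $a_{\disc}$ (Lemma \ref{lemma:norm_equivalence}) and positivity of the coupling form, compactness to extract an $L^2$-convergent subsequence, identification of the limit by testing with projections of smooth functions using Lemmas \ref{lemma:weak_discrete_compactness} and \ref{lemma:tangential_gradient_consistency}, and uniqueness of the continuous solution to conclude. The only cosmetic difference is that you dwell on the trace convergence in the coupling term, which the paper dispatches by noting that $cc(\cdot,\cdot)$ involves only face unknowns and algebraic conditions on the fault.
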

\begin{proof}
    Given a discretization of both the domain and the fault $\disc \in \family$
    and $\hat{\disc} \in \hat{\family}$,
    let us use the following functions $(v, \hat{v}) \in \Yz$.
    Considering the continuity of the functional in \eqref{eq:weak_disc_form}
    with constant $c_F \in \R^+$, we have
    \begin{gather*}
        \abs{\bFormDisc{\left( v, \hat{v} \right)}{\left(
        v, \hat{v} \right)}}
        = \abs{F\left( \Pi_\mesh v, \Pi_{\hat{\mesh}}
        \hat{v} \right)} \leq c_F
        \sum_{j} \norm{ \Pi_\mesh v_{j} }_{L^2(\Omega_j)} + c_F
        \norm{\Pi_{\hat{\mesh}} \hat{v}_{j} }_{L^2(\gamhatj)},
    \end{gather*}
    thanks to Lemma 5.3 of \cite{Eymard2010} we can bound the $L^2$-norms by the
    norms defined in \eqref{eq:discrete_norms}, obtaining
    \begin{gather} \label{eq:convergence_1}
        \abs{\bFormDisc{\left( v, \hat{v} \right)}{\left( v, \hat{v}\right)}}
        \lesssim \norm{\Pi_\mesh v}_{1,\mesh} + \norm{\Pi_{\hat{\mesh}}
        \hat{v}}_{1,\hat{\mesh}} \lesssim \abs{ \left( v, \hat{v} \right)
        }_{\Y}.
    \end{gather}
    We derive now a lower bound for the bilinear form $\bFormDisc{\cdot}{ \cdot
    }$, using the semi-norm \eqref{eq:discrete_semi-norms}. We start from
    \begin{gather*}
        \bFormDisc{\left( v, \hat{v} \right)}{\left( v, \hat{v}\right)}
        \geq a_{\disc} \left( \left( v, \hat{v} \right), \left( v, \hat{v}
        \right) \right) \geq c_{\Lambda} \sum_{j} \norm{\dNabla
        v_j}^2_{L^2(\Omega_j)} +
        c_{\hat{\lambda}} \norm{\TT \dNabla
        \hat{v}_j}^2_{L^2(\gamhatj)},
    \end{gather*}
    where $c_{\Lambda}, c_{\hat{\lambda}} \in \R^+$ is the minimum eigenvalue of
    $\Lambda$ and the minimum of $\hat{\lambda}$, respectively.  Thanks to Lemma
    \ref{lemma:norm_equivalence} and Lemma 4.1 of \cite{Eymard2010}, we end up
    with $\bFormDisc{\br{ v, \hat{v} }}{\br{v, \hat{v}}} \gtrsim \abs{\br{v,
    \hat{v}}}_{\Y}^2$.  Considering the latter inequality and
    \eqref{eq:convergence_1} we have an a-priori bound on the discrete solution
    of \eqref{eq:weak_disc_form}: $\abs{\left(p_\disc, \hat{p}_{\hat{\disc}}
    \right)}_{\Y} \lesssim 1$, with a constant independent on the chosen
    discretization. Using Lemma 5.7 of \cite{Eymard2010} we can extract a
    sub-sequence from $\left( p_\disc, \hat{p}_{\hat{\disc}}\right)$, still
    denoted by $\left( p_\disc,\hat{p}_{\hat{\disc}}\right)$, and $\left( p^*,
    \hat{p}^*\right) \in \V \times \Vhat$ such that
    \begin{gather} \label{eq:convergence_2}
        \lim_{h_\disc \rightarrow 0} \norm{ p_\disc - p^* }_{L^2(\Omega_{1,2})}
        = 0
        \quad \text{ and } \quad
        \lim_{h_\disc \rightarrow 0} \norm{
        \hat{p}_{\hat{\disc}} - \hat{p}^* }_{L^2(\gamhatiii)} =0.
    \end{gather}
    The result is proved if we show that $\left( p^*, \hat{p}^*\right)$ is the unique solution of
    \eqref{eq:weak_form}. We start considering as test functions $\left(v,
    \hat{v} \right) = \left( P_\disc \phi, P_{\hat{\disc}} \hat{\phi} \right)$,
    with $\phi \in C^\infty_c(\Omega_{1,2})$ and $\hat{\phi} \in
    C^\infty_c(\gamhatiii)$, then
    \begin{gather*}
        \lim_{h_\disc \rightarrow 0} \bFormDisc{\left( p_\disc,
        \hat{p}_{\hat{\disc}} \right)}{\left( v, \hat{v}
        \right)} = \lim_{h_\disc \rightarrow 0} a_{\disc}
        \left( \left( p_\disc, \hat{p}_{\hat{\disc}} \right), \left(
        v, \hat{v} \right) \right) + \lim_{h_\disc
        \rightarrow 0} cc \left( \left( p_\disc, \hat{p}_{\hat{\disc}} \right),
        \left( v,  \hat{v} \right) \right),
    \end{gather*}
    the second term, since involve only algebraic conditions on the fault,
    converge to the bilinear form $cc \left( \left( p^*, \hat{p}^* \right), \left(
    v, \hat{v} \right) \right)$ as $h_\disc \rightarrow 0$ thanks to
    \eqref{eq:convergence_2} for the fault unknowns and thanks to
    \cite{Eymard2010}
    for the porous medium unknowns. For the $a_\disc$ bilinear form, thanks to Lemma
    4.4 of \cite{Eymard2010}, we have that $a_{\disc,\Omega} \br{ p_\disc, v}$
    converge to $a_\Omega \br{ p^*, v }$ as $h_\disc \rightarrow 0$.
    Considering Lemma \ref{lemma:weak_discrete_compactness} and Lemma
    \ref{lemma:tangential_gradient_consistency} we have the convergence of
    the discrete
    bilinear form $a_{\gamma,\disc} \br{\hat{p}_{\hat{\disc}}, \hat{v}}$ to
    $a_\gamma \br{\hat{p}^*, \hat{v}}$ as
    $h_\disc \rightarrow 0$. Since \eqref{eq:weak_form}
    is well posed then $p^* = p$ and $\hat{p}^* = \hat{p}$.
\end{proof}

For the implementation prospective we consider the virtual cell approach
presented and discussed in \cite{Faille2014a}. The fault cells, in the
co-dimensional domain, are (virtually) extruded in the normal direction by their
thickness and the normal hybrid finite volume scheme is employed. As proved in the
aforementioned work, this approach gives an equivalent scheme, with respect to
the discretization of the co-dimensional object, for matching
grids.
For non-matching grids, the virtual cell approach is preferred as it does not
boil down to a two point flux approximation across the non-matching faces.
In the subsequent examples we therefore employ such an approximation for
both matching and non-matching grids.




\section{Examples} \label{sec:examples}


\def\pathImages{./Parts/Images}


In this section we present some numerical results to illustrate and assess the
properties of the DLRM. Even if Lemma \ref{lemma:convergence_matching} ensure
the convergence of the scheme for matching grids we consider different tests for
both matching and non-matching grids to obtain a numerical evidence of the
convergence.  Since it is quite difficult to exhibit an exact solution for some
realistic test case in Example \ref{subsec:partially_impermeable_fault} and
\ref{subsec:conductive_fault}, which are inspired by \cite{Martin2005}, we
propose two different problems. They represent a fault immersed in a domain with
normal permeability smaller than the one of the surrounding rock matrix in order
to obtain a
pressure jump, and a purely conductive fault. In both cases we consider a
reference solution given by a very fine grid. Finally in Example
\ref{subsec:anisotropic_fault} we analyse the effect of the mesh size difference
between the two sub-domains $\Omega_j$ and consequently on the
fault $\gamhatj$. For each test we use a direct method to solve the linear
system. The last test in Example \ref{subsec:slipping_domain} consider a more
realistic simulation with a slipping domain. The code is developed in the Arcane
framework \cite{Grospellier2009}.

To evaluate numerically the order of the error we consider as a reference
solution an approximate solution computed by an extremely refined
Cartesian mesh.  The error for the
porous medium is defined in the following way
\begin{gather*}
    \norm{p_\disc - p_{\rm ref}}_{L^2}^2 \defeq \sum_{K \in \mesh} \abs{K}
    \left( p_\disc|_K - \Pi p_{\rm ref} |_K \right)^2,
\end{gather*}
where $\mesh$ is the coarse mesh and $\Pi$ is an interpolation operator between
the fine mesh and the coarse mesh. Moreover the error for the two layers of the
fault is defined as
\begin{gather*}
    \norm{\hat{p}_{\hat{\disc}} - \hat{p}_{\rm ref}}_{L^2}^2 \defeq \sum_{j} \sum_{K \in
    \hat{\mesh}_j} \abs{K} \left( \hat{p}_{j,\hat{\disc}}|_{K} - \hat{\Pi}_j
    \hat{p}_{j, \rm ref} |_{K}
    \right)^2,
\end{gather*}
where $\hat{\mesh}_j$ is the coarse mesh for the layer $j$ of the fault and
$\hat{\Pi}_j$ is an interpolation operator between the fine mesh for the layer
$j$ of the fault and the coarse mesh $\hat{\mesh}_j$.


\subsection{Partially impermeable fault} \label{subsec:partially_impermeable_fault}


We consider the domain $\Omega = (0,1)^2$ with a vertical fault, with thickness
$d=10^{-2}$, in the centre of the domain, see Figure
\ref{fig:reducedModel_test2_domain} for a sketch of the computational domain.
\begin{figure}[!htp]
    \centering
    \subfloat[Domain.]
    {
        \includegraphics{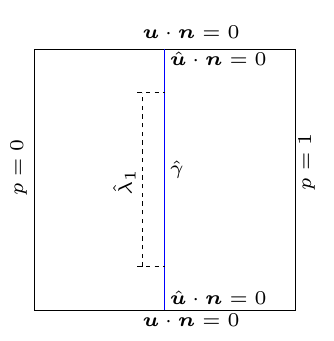}
        \label{fig:reducedModel_test2_domain}
    }
    \hspace{0.1\textwidth}%
    \subfloat[Mesh.]
    {
        \raisebox{0.05\height}{\includegraphics[width=0.2\textwidth]{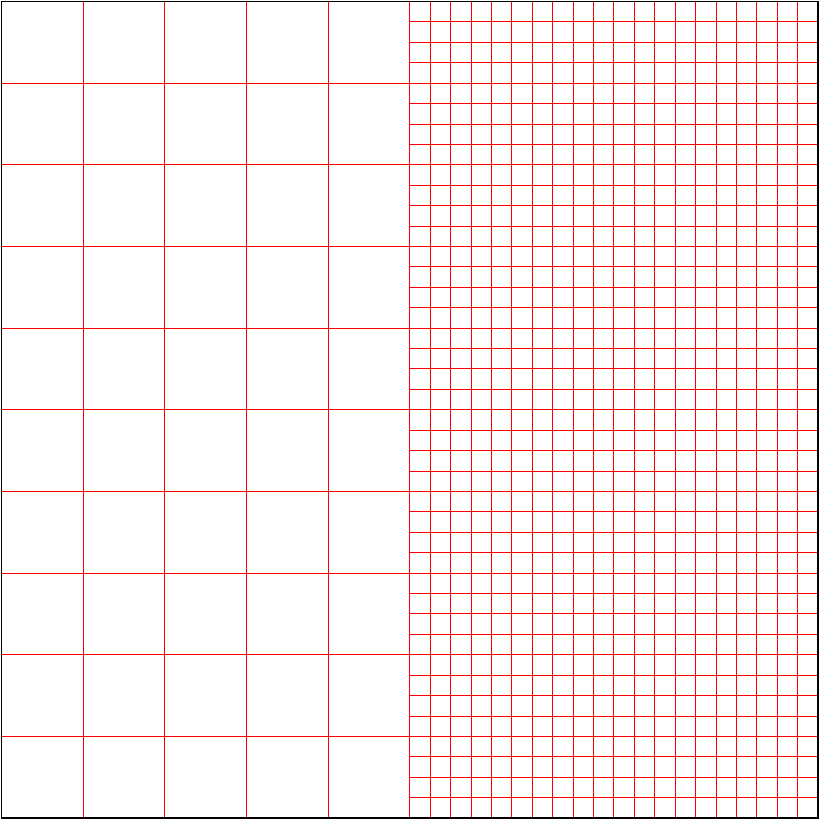}}%
        \label{fig:reducedModel_test2_mesh}
    }
    \caption{Computational domain for tests in the subsection \ref{subsec:partially_impermeable_fault}
        with the boundary conditions and an example of a mesh used.}%
\end{figure}
We assume homogeneous Neumann boundary conditions on the top and bottom of the
domain and the fault. Homogeneous Dirichlet boundary condition at left and
Dirichlet boundary condition $p=1$ in the right part of the domain. We
consider identity matrix as permeability in the domain. In the fault we consider
a discontinuous tangential permeability, with value $\hat{\lambda}_1(s) =
10^{-2}$ for $s \in (0.25,0.75)$ and $\hat{\lambda}_2 = 1$ in the rest of the
fault. The computational mesh is composed by quadrangular elements, non-matching
at the fault.
\begin{figure}[!htp]
    \centering
    \subfloat[Solution.]
    {
        \includegraphics[width=0.33\textwidth]{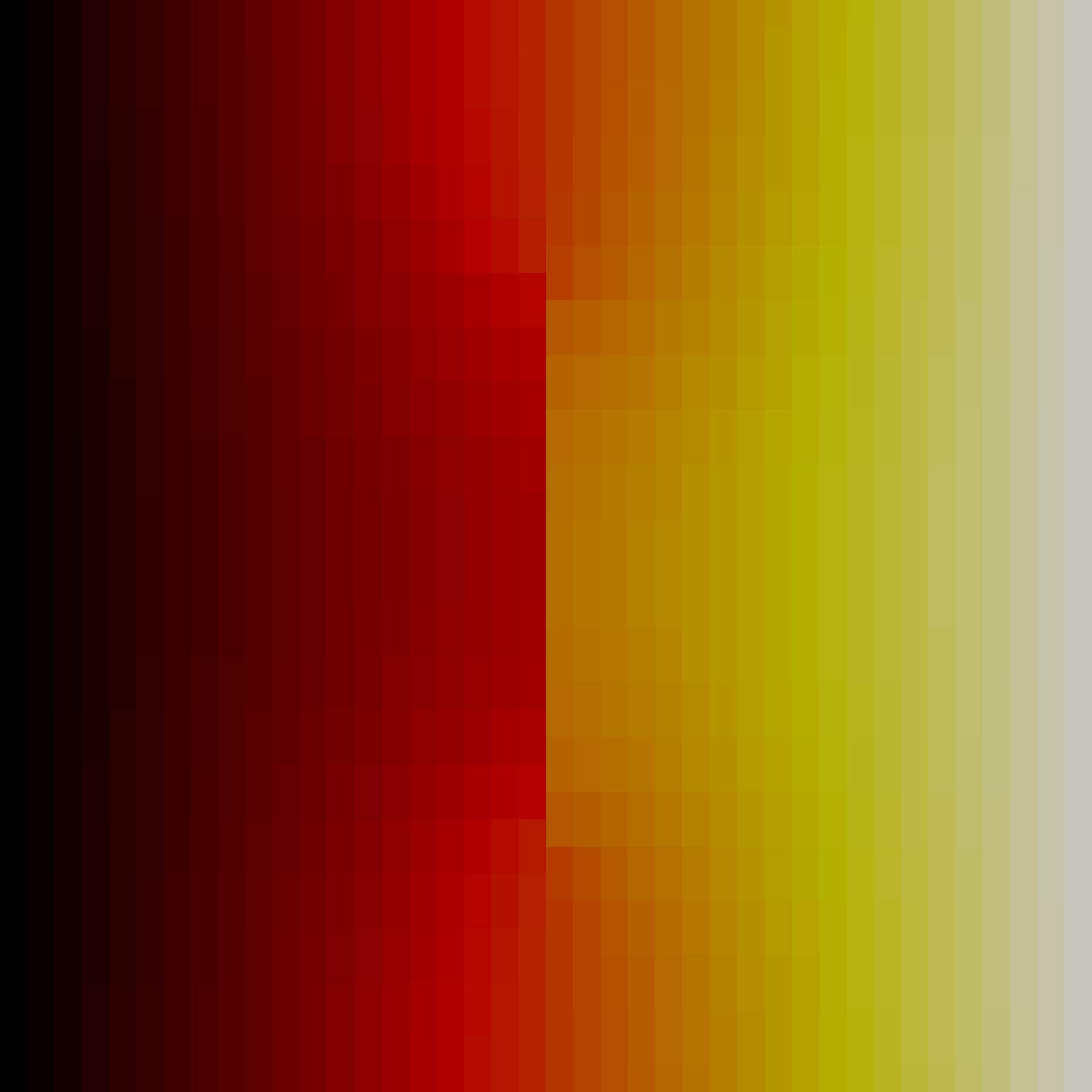}
    }%
    \hspace{0.05\textwidth}%
    \subfloat[Warped solution.]
    {
        \includegraphics[width=0.33\textwidth]{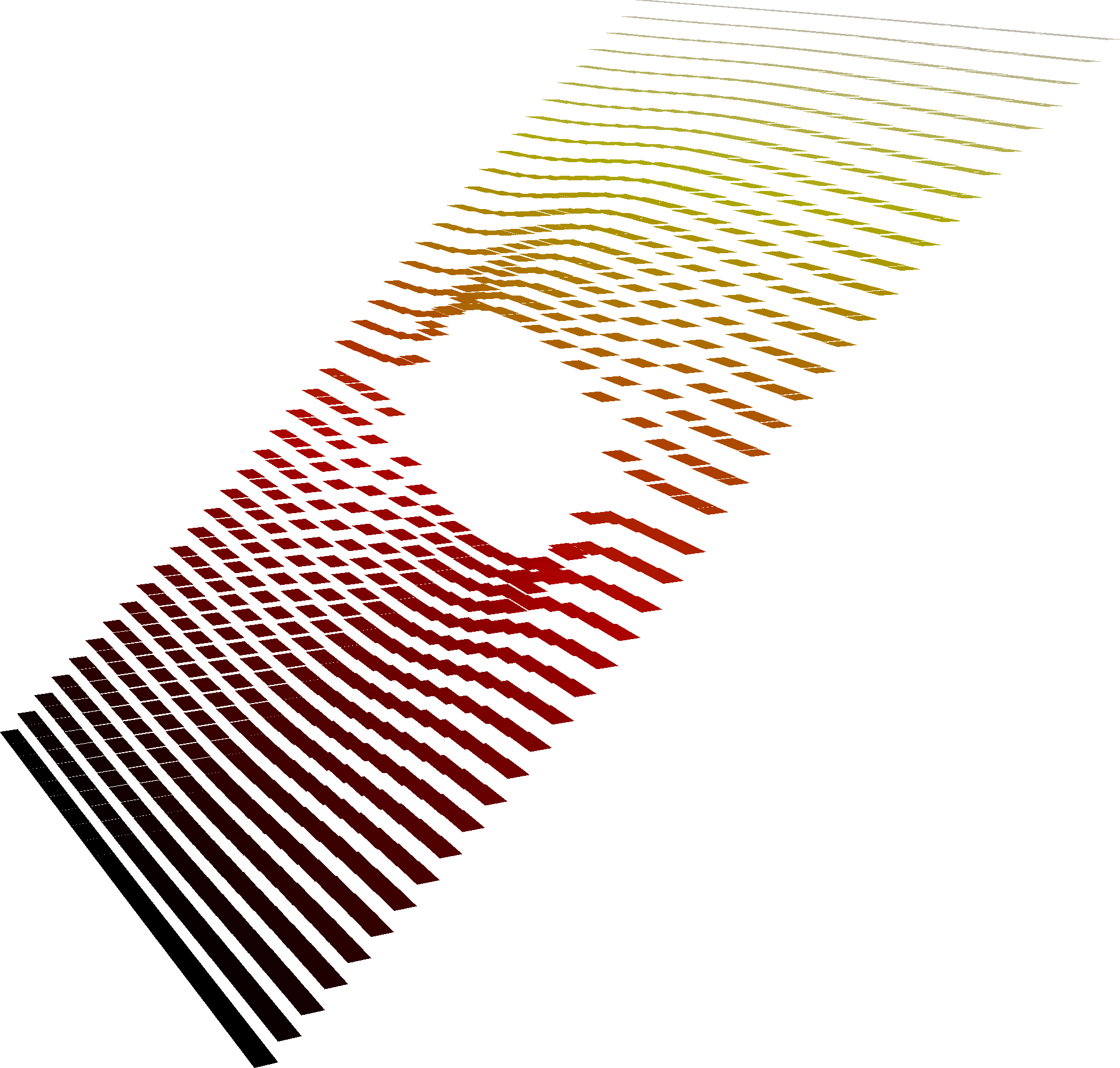}
    }%
    \hspace{0.05\textwidth}%
    \includegraphics{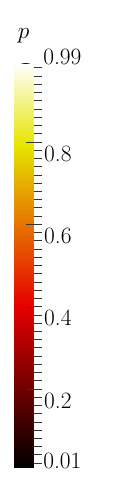}
    \caption{Pressure field for the partially permeable case.}%
    \label{fig:reducedModel_test2}%
\end{figure}
The solution is reported in Figure \ref{fig:reducedModel_test2}. We
can notice that the solution across the fault exhibit a jump where the fault
has a low permeability. The maximum and minimum discrete principle, in this
particular case, are fulfilled.

We evaluate the error decay considering a reference grid of approximatively two
millions of elements. Following
\cite{Frih2011} the analytical solution exhibit a singularity at $(0.5,0.25)$
and $(0.5,0.75)$, to focus our attention only on the dependence the regularity
of the solution on the error order, we consider a family of Cartesian meshes.
The error history is presented in Figure
\ref{fig:error_dacay_partially_impermeable_fault_conforming}, which shows a
pressure error for the both the sides of the fault is close to
$\Order{h_\disc^2}$.  Moreover the pressure error for the porous medium is close
to $\Order{h_\disc^{\frac{3}{2}}}$, confirming the dependence of the error order
to the regularity of the exact solution. Figure
\ref{fig:reducedModel_test2_error_1} shows the error of a particular mesh,
highlight the two peaks of error close to the singularities.
\begin{figure}[!htp]
    \centering
    \subfloat[Error history.]
    {
        \includegraphics{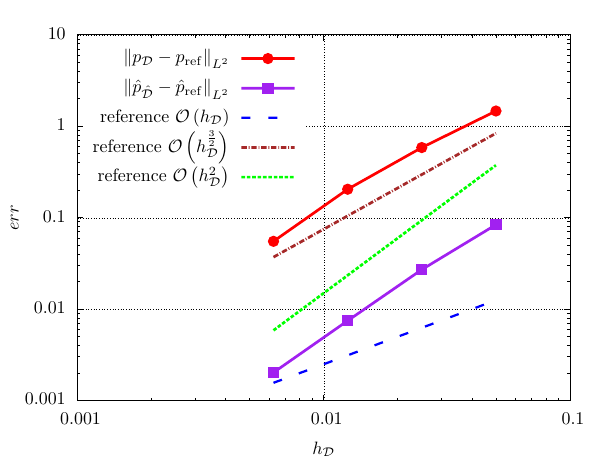}
        \label{fig:error_dacay_partially_impermeable_fault_conforming}
    }%
    \hspace{0.025\textwidth}%
    \subfloat[Error.]
    {
        \includegraphics[width=0.33\textwidth]{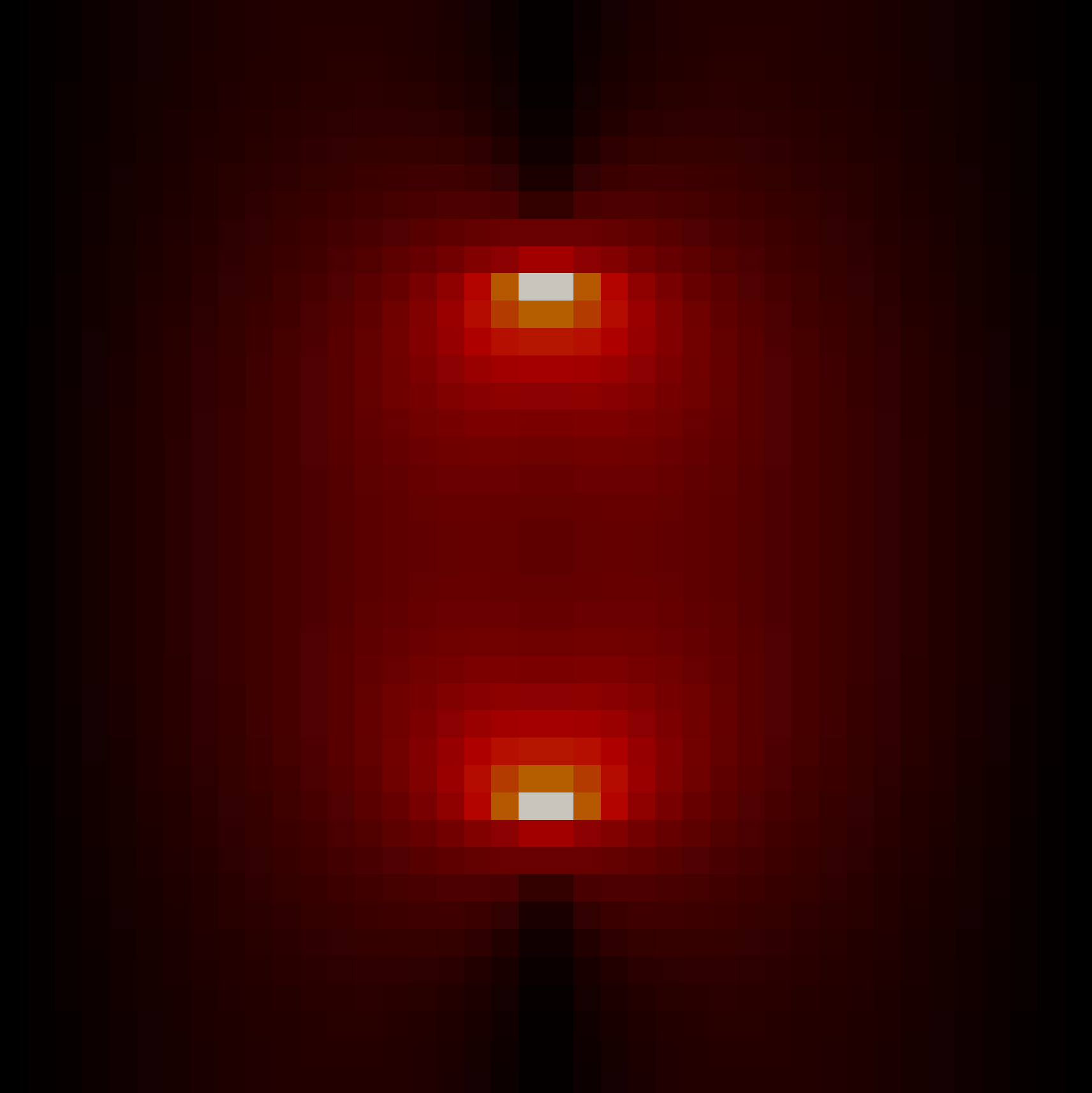}%
        \label{fig:reducedModel_test2_error_1}
    }%
    \hspace{0.025\textwidth}%
    \includegraphics{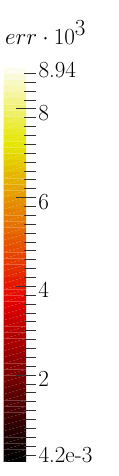}
    \caption{Error history for both the porous medium and the fault and a
        representation of the error for a particular choice of the mesh, for
        the matching case. In dashed lines are represented also some reference curves.}%
\end{figure}
We consider also a different family of meshes for the error analyses, a coarser
example is represented in Figure \ref{fig:reducedModel_test2_mesh}. Each
elements in the left part of the domain is constructed with 16 of small elements
used for the right part. Even if the error is bigger then the previous case,
both the pressure errors are close to $\Order{h_\disc^{\frac{3}{2}}}$.
\begin{figure}[!htp]
    \centering
    \subfloat[Error history.]
    {
    \includegraphics{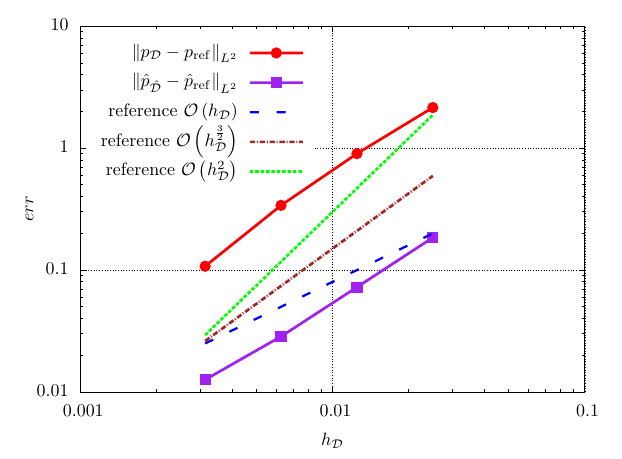}
        \label{fig:error_dacay_partially_impermeable_fault_non-matching}
    }%
    \hspace{0.025\textwidth}%
    \subfloat[Error.]
    {
        \includegraphics[width=0.33\textwidth]{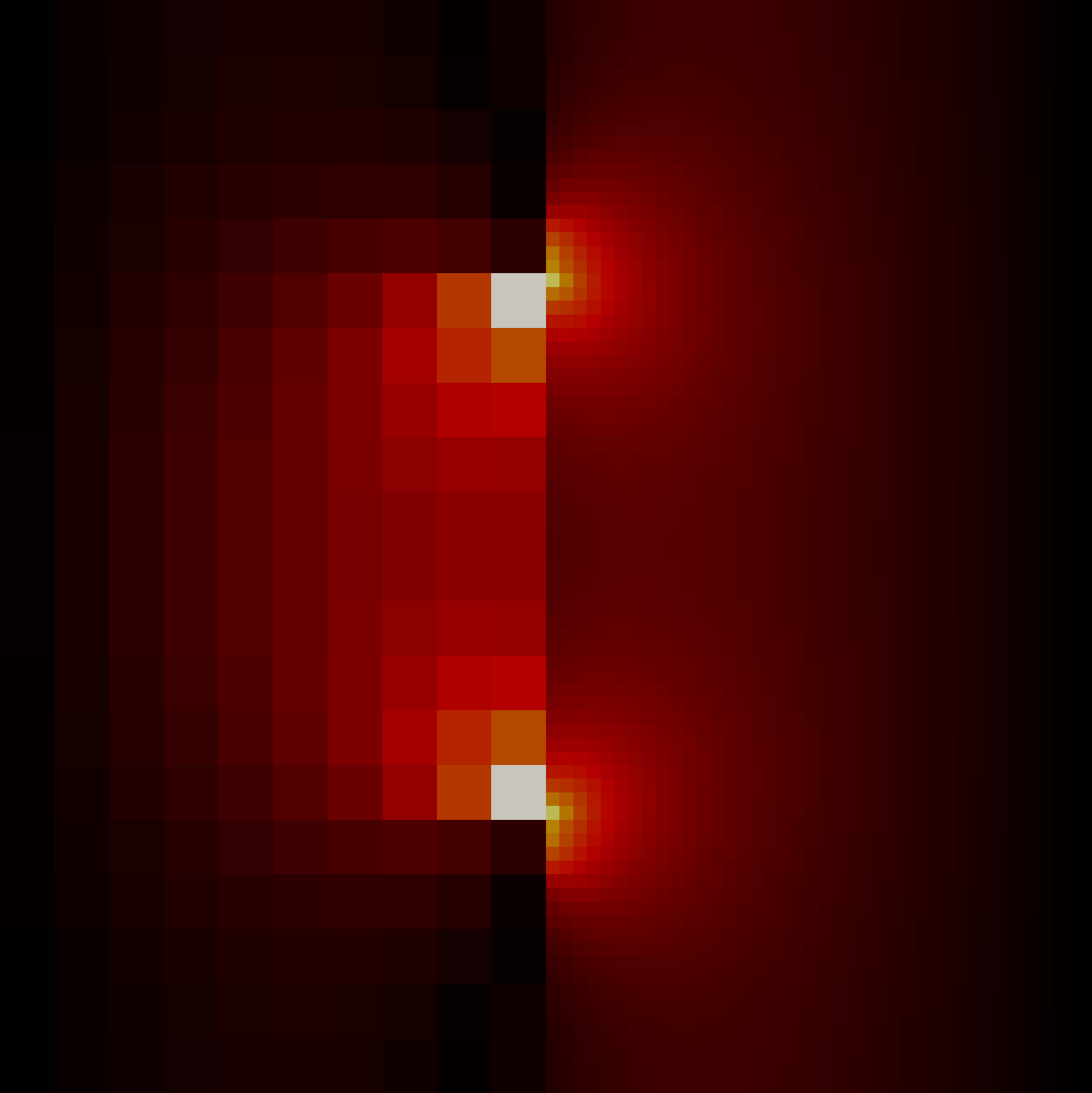}%
        \label{fig:reducedModel_test2_error_1-3}
    }
    \hspace{0.025\textwidth}%
    \includegraphics{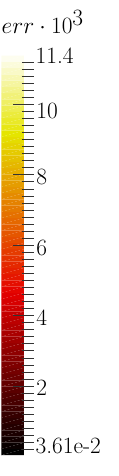}
    \caption{Error history for both the porous medium and the fault and a
        representation of the error for a particular choice of the mesh.
        The family of meshes used are represented in Figure
        \ref{fig:reducedModel_test2_mesh}.
        In dashed lines are represented also some reference curves.}%
\end{figure}
In Figure \ref{fig:reducedModel_test2_error_1-3} we can see the different distribution of
the error for the two sides of the domain, mainly present in its coarse part.
Anyway, in each side, the error is concentrated close to the singularities.


\subsection{Conductive fault} \label{subsec:conductive_fault}


We consider the domain $\Omega = (0,1)^2$ with a vertical fault, of thickness
$d=10^{-2}$, in the centre of the domain, see Figure
\ref{fig:reducedModel_test1_domain} for a sketch of the computational domain.
\begin{figure}[!htp]
    \centering
    \subfloat[Domain.]
    {
        \includegraphics{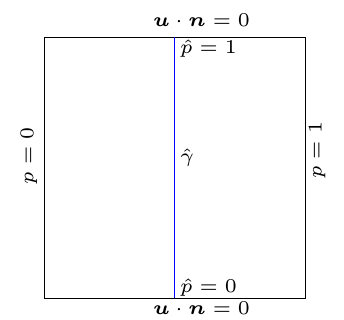}
        \label{fig:reducedModel_test1_domain}
    }%
    \hspace{0.1\textwidth}%
    \subfloat[Mesh.]
    {
        \raisebox{0.05\height}{\includegraphics[width=0.2\textwidth]{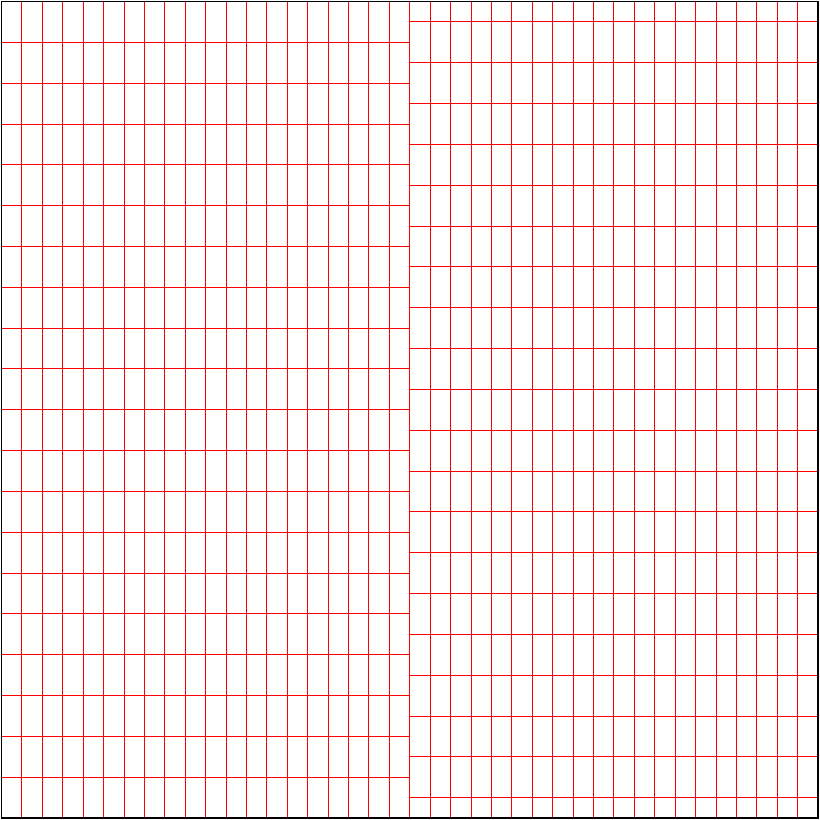}}%
        \label{fig:domain_reducedModel_test1_mesh}
    }
    \caption{Computational domain for tests in the subsection \ref{subsec:conductive_fault}
        with the boundary conditions and an example of a mesh used.}%
    \label{fig:domain_reducedModel_test1}%
\end{figure}
We assume homogeneous Neumann boundary conditions
on the top and bottom of the domain, homogeneous Dirichlet boundary condition
at left and Dirichlet boundary condition $p=1$ in the right part of the domain.
We impose Dirichlet boundary conditions for both the ending of the
fault, with value $p=1$ at the top and homogeneous at the bottom.
Finally we consider identity matrix as permeability in the domain and in the
fault we impose $\lambda_{f,\ttau}=10^{-2}$ and $\lambda_{f,\normal}=1$.
\begin{figure}[!htp]
    \centering
    \subfloat[Solution.]
    {
        \includegraphics[width=0.33\textwidth]{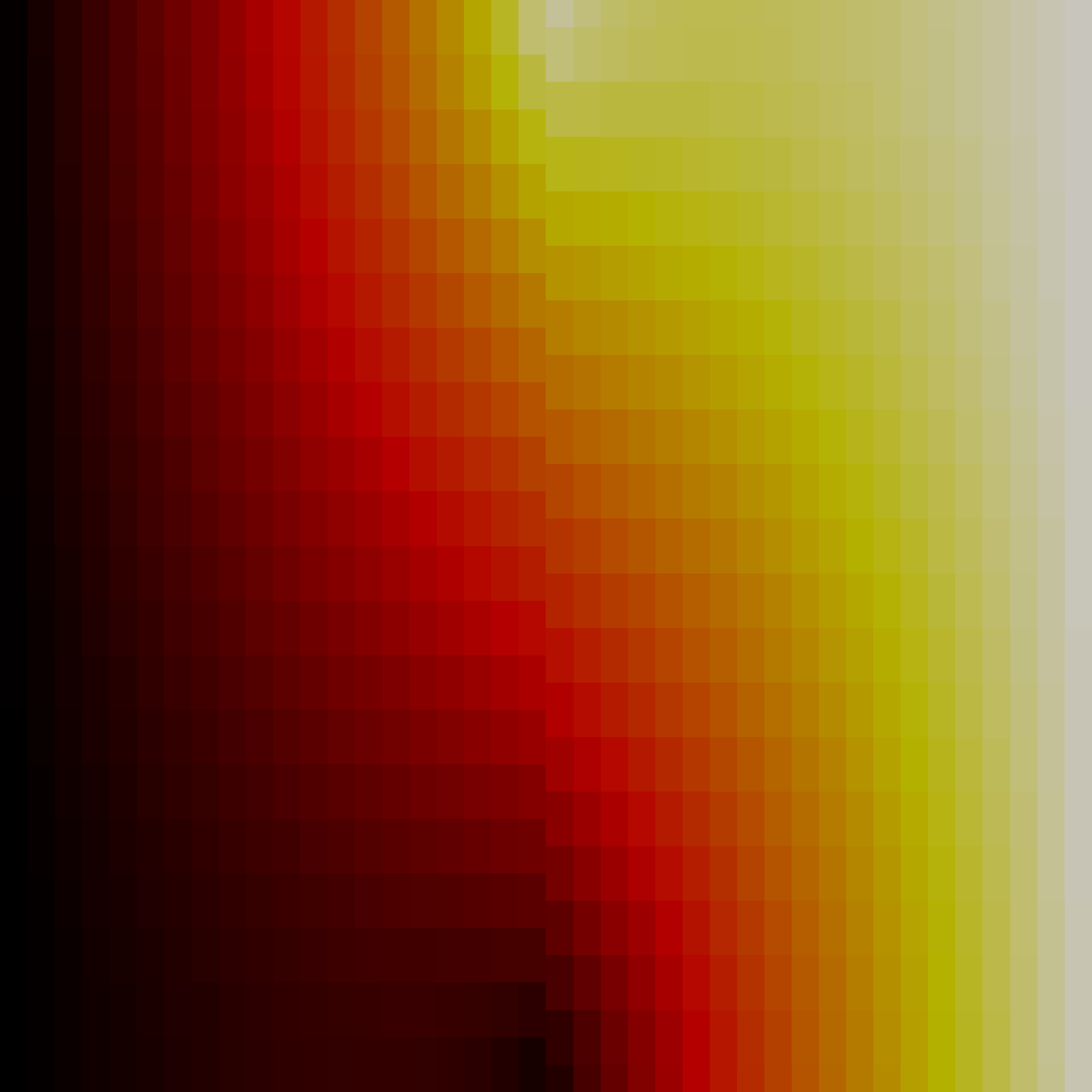}
    }%
    \hspace{0.05\textwidth}%
    \includegraphics{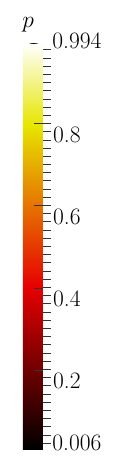}
    \hspace{0.05\textwidth}%
    \subfloat[Warped solution.]
    {
        \includegraphics[width=0.31\textwidth]{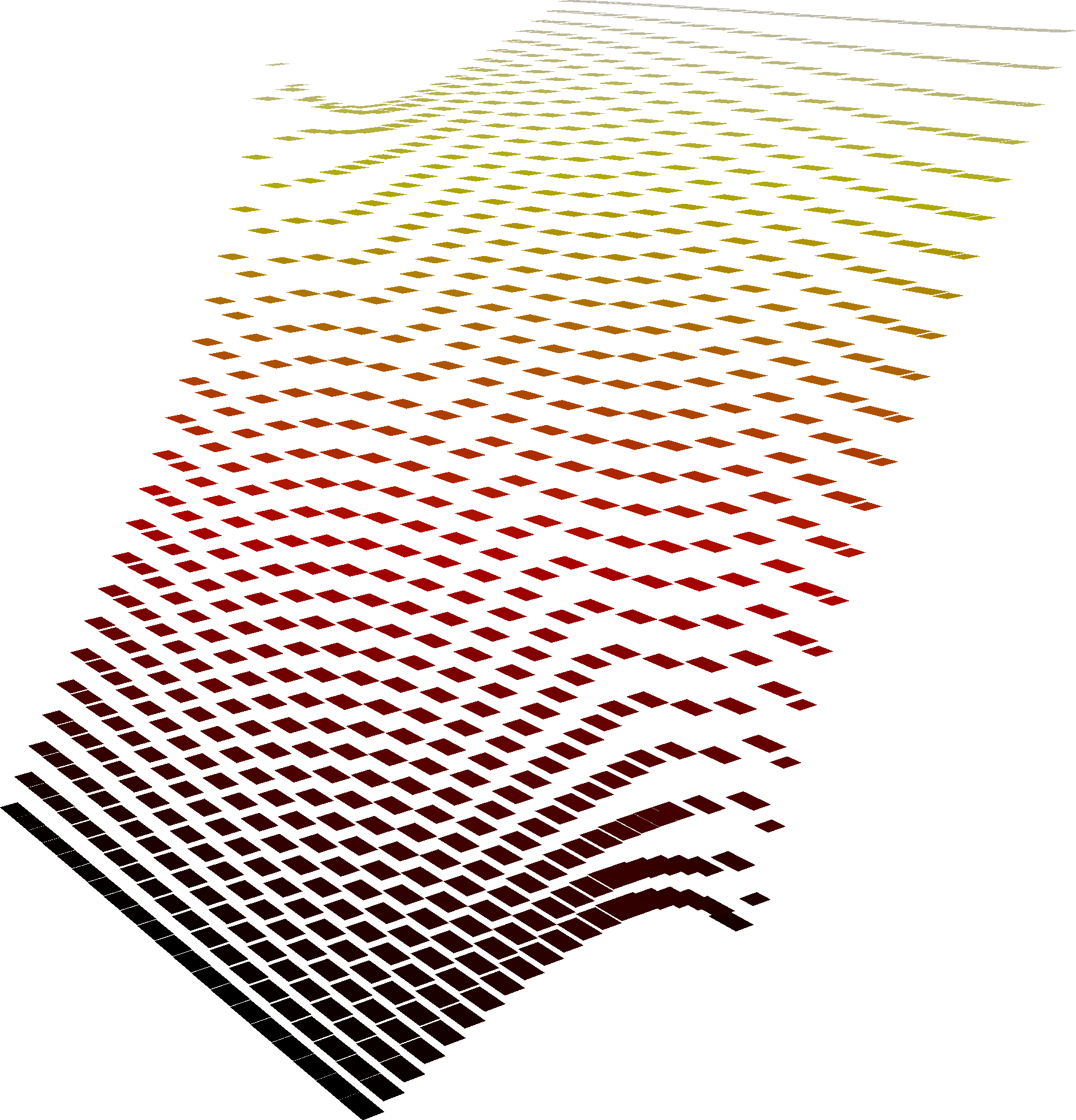}
    }%
    \caption{Pressure field for the conductive case with non-matching grid at the fault.}%
    \label{fig:reducedModel_test1}%
\end{figure}
The computational mesh is composed by quadrangular elements, non-matching at the
fault. The solution of \eqref{eq:red_problem} is depicted in Figure
\ref{fig:reducedModel_test1}, we notice that the solution across the fault is
continuous, as we expect, and the geometrical non-conformity is handled without
any problem. The maximum and minimum discrete principle, in this particular
experiment, are fulfilled.

To compute the error decay we consider a reference grid of approximatively two
millions of elements. In Figure
\ref{fig:reducedModel_test1_error_dacay_plot} we present the error history.  The
estimated order of the pressure error for the porous medium is a little lower
then $\Order{h_\disc^2}$. Moreover the error for the two layers of the fault is
in between $\Order{h_\disc^{\frac{3}{2}}}$ and $\Order{h_\disc^2}$, closer to
the latter.  If we suppose that the exact solution is continuous in $\Omega$,
then we have the numerical evidence of the second order of convergence of both
the pressure in the porous medium and in the fault. In Figure
\ref{fig:reducedModel_test1_error_1-3} is represented an example of the error,
we can notice that the highest error is close the both the ends of the fault,
which is a normal behaviour.
\begin{figure}[!htp]
    \centering
    \subfloat[Error history.]
    {
    \includegraphics{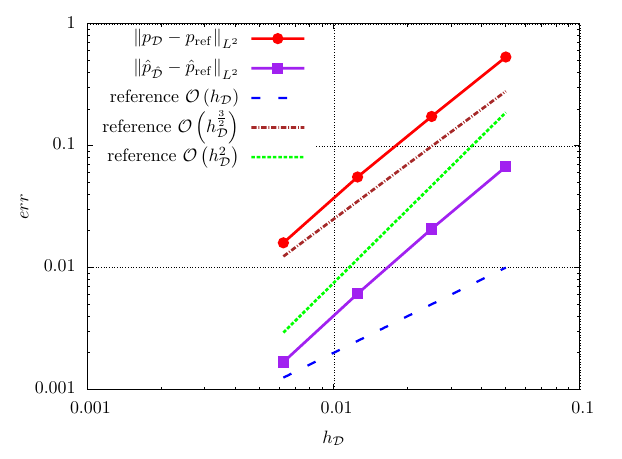}
        \label{fig:reducedModel_test1_error_dacay_plot}%
    }
    \hspace{0.025\textwidth}%
    \subfloat[Error.]
    {
        \includegraphics[width=0.33\textwidth]{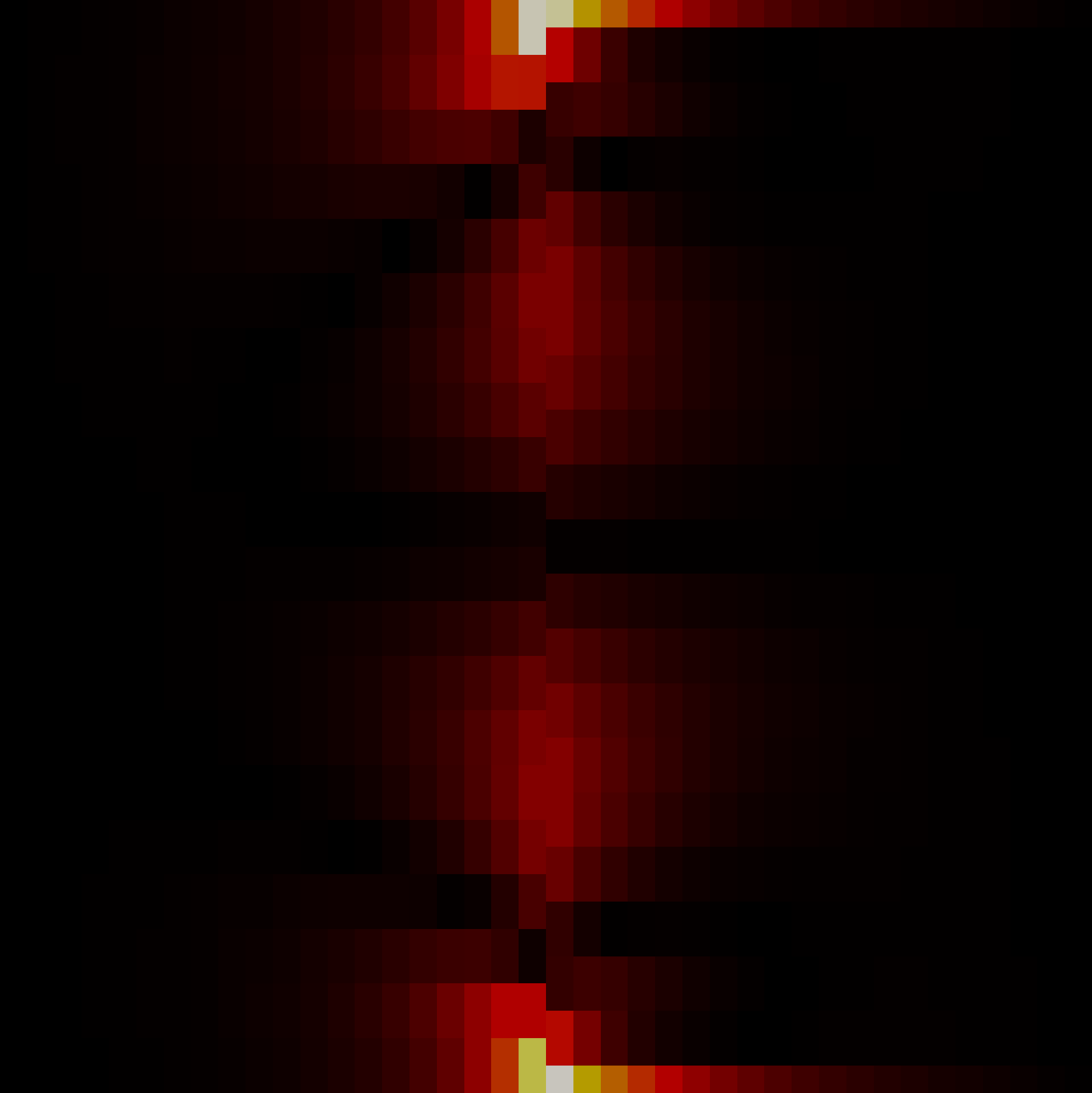}%
        \label{fig:reducedModel_test1_error_1-3}
    }
    \hspace{0.025\textwidth}%
    \includegraphics{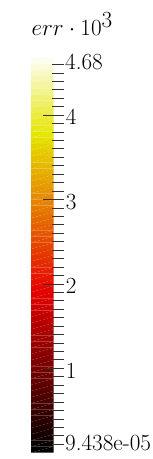}
    \caption{Error history for both the porous medium and the fault and a
        representation of the error for a particular choice of the mesh. The
        family of meshes used are represented in Figure
        \ref{fig:domain_reducedModel_test1_mesh}. In dashed
        lines are represented also some reference curves.}%
    \label{fig:reducedModel_test1_error_dacay}%
\end{figure}
We consider also a different family of meshes for the error analyses, a coarser
example is represented in Figure \ref{fig:reducedModel_test2_mesh}. Each
elements in the left part of the domain is constructed with 16 of small elements
used for the right part. Also in this case both the pressure errors are close to
$\Order{h_\disc^2}$. Figure \ref{fig:reducedModel_test1_error_1-3_non_matching} shows
the error for a particular mesh, also in this case it is concentrated close the
two ends of the fault. As we expect the error is higher in the coarse part of
the mesh.
\begin{figure}[!htp]
    \centering
    \subfloat[Error history.]
    {
    \includegraphics{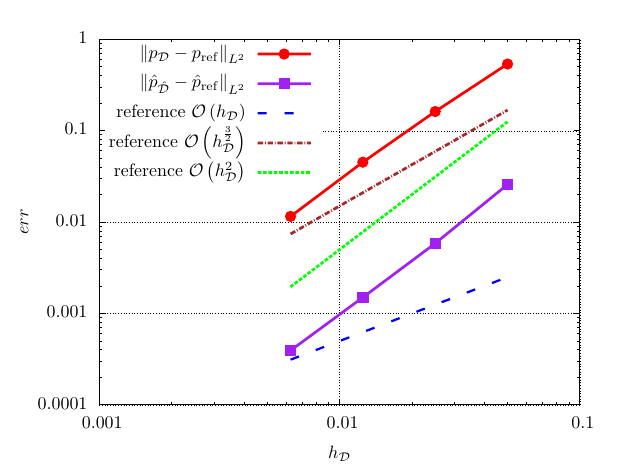}
    }
    \hspace{0.025\textwidth}%
    \subfloat[Error.]
    {
        \includegraphics[width=0.33\textwidth]{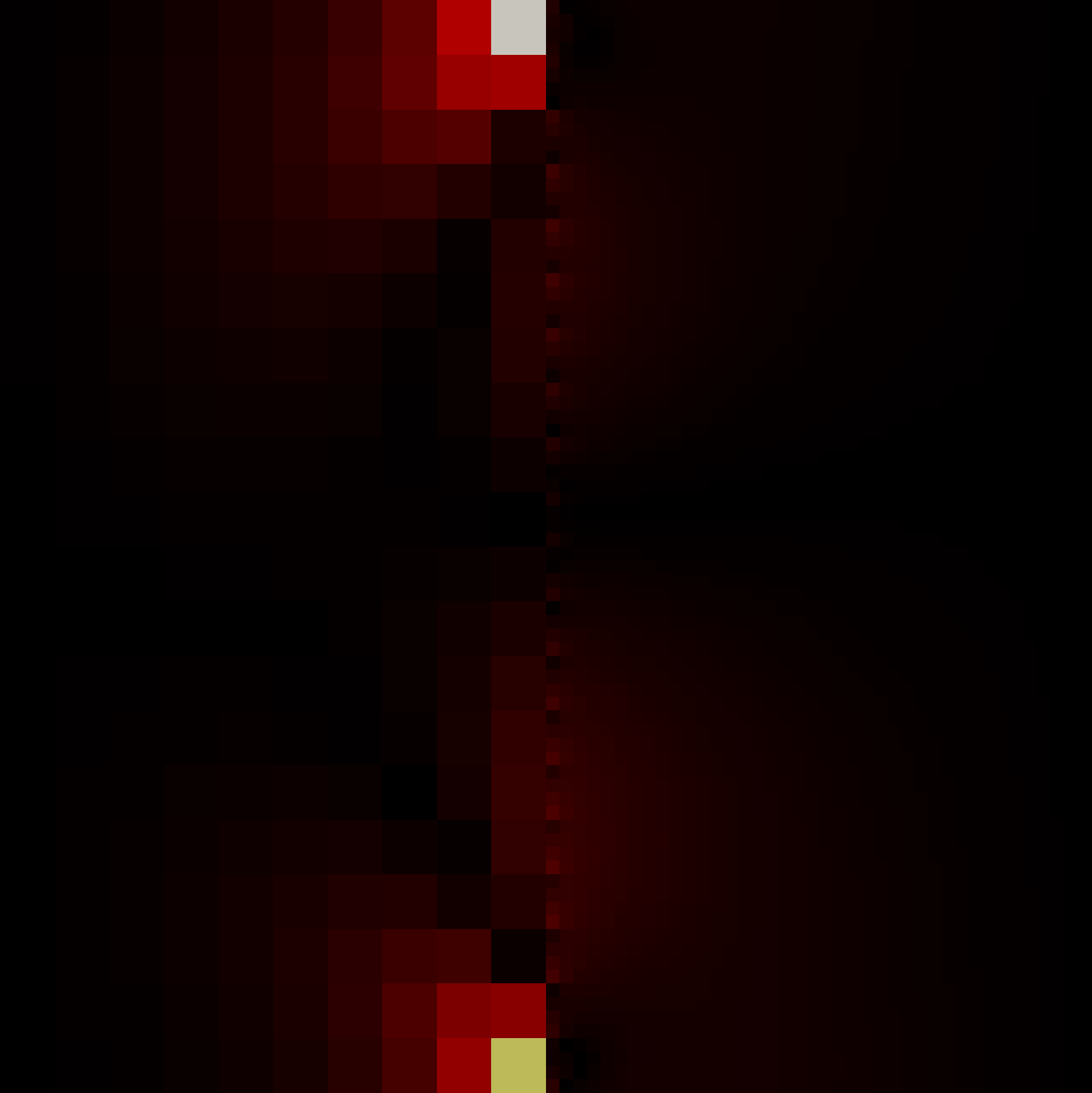}%
        \label{fig:reducedModel_test1_error_1-3_non_matching}
    }
    \hspace{0.025\textwidth}%
    \includegraphics{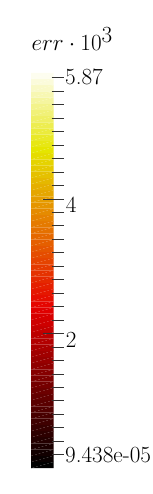}
    \caption{Error history for both the porous medium and the fault and a
        representation of the error for a particular choice of the mesh.
        The family of meshes used are represented in Figure
        \ref{fig:reducedModel_test2_mesh}. In dashed
        lines are represented also some reference curves.}%
    \label{fig:reducedModel_test1_error_dacay_non_matching}%
\end{figure}
We notice in both Figures \ref{fig:reducedModel_test1_error_1-3} and
\ref{fig:reducedModel_test1_error_1-3_non_matching}, especially in the right
part of the domain for the latter, some oscillations in the error. Contrary to
\cite{Frih2011}, in this case these spurious effects are due to a mesh effect. In
Figure \ref{fig:reducedModel_test1_error_1-3_cartesian} we compute the error for
a Cartesian mesh, the oscillations are not present.
\begin{figure}[!htp]
    \centering
    \includegraphics[width=0.33\textwidth]{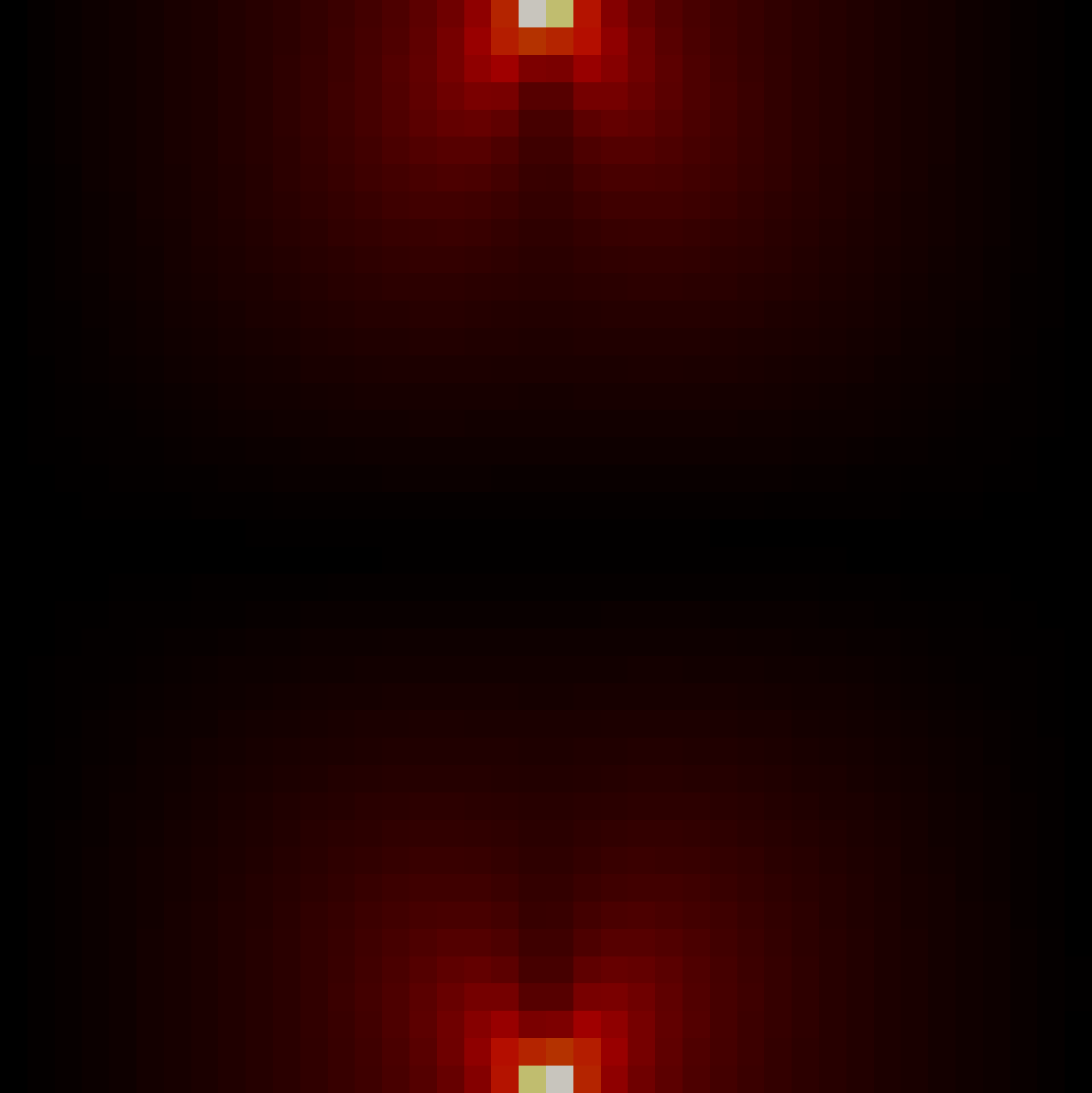}%
    \hspace{0.025\textwidth}%
    \includegraphics{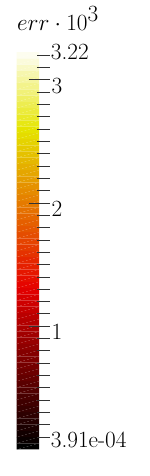}
    \caption{Representation of the error for a particular choice of a Cartesian mesh.}%
     \label{fig:reducedModel_test1_error_1-3_cartesian}
\end{figure}


\subsection{Anisotropic fault} \label{subsec:anisotropic_fault}


In this test case we present a much more involved example then the previous one,
to verify the goodness of the numerical solution in presence of strong contrast
in the mesh size. We consider the domain $\Omega=(0,1)^2$  with a vertical
fault of width $d=10^{-2}$. See Figure \ref{fig:domain_reducedModel_test3} for
a sketch of the computational domain.
\begin{figure}[!htp]
    \centering
\includegraphics{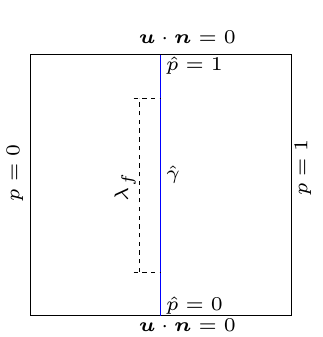}
    \caption{Computational domain for tests in the subsection
        \ref{subsec:anisotropic_fault} with the boundary conditions.}%
    \label{fig:domain_reducedModel_test3}%
\end{figure}
We impose homogeneous Neumann boundary conditions on the top and bottom of the
domain and Dirichlet boundary conditions for the right and left part of the
domain, as well as the fault. For the domain we assume $p=0$ on the left side
and $p=1$ on the right side, while for the fault $p=0$ on the bottom and $p=1$
in the top. We consider identity matrix for the porous medium and, given
$\lambda_f = 100$, for the fault
\begin{gather*}
    \Lambda_f (s) =
    \begin{bmatrix}
        \lambda_f & 0 \\
        0 & \lambda_f^{-1}
    \end{bmatrix}
    \text{ for } s \in (0.25,0.75), \quad
    \Lambda_f (s) =
    \begin{bmatrix}
        \lambda_f^{-1} & 0\\
        0 & \lambda_f
    \end{bmatrix}
    \text{ for } s \in (0,0.25) \cup (0.75,1).
\end{gather*}
In its two extreme parts, the fault behaves as a low permeable strata for the flow across
itself while as a channel for the flow inside. Vice versa for the other part of the
fault, giving a solution with two singularities in the points $(0.5,0.25)$ and
$(0.75,1)$. We consider a family of meshes composed by fixed coarse
discretization of the left part and a refined discretization of the right part
of the domain.
\begin{figure}[!htp]
    \centering
    \subfloat[4 cells.]
    {
        \includegraphics[width=0.27\textwidth]{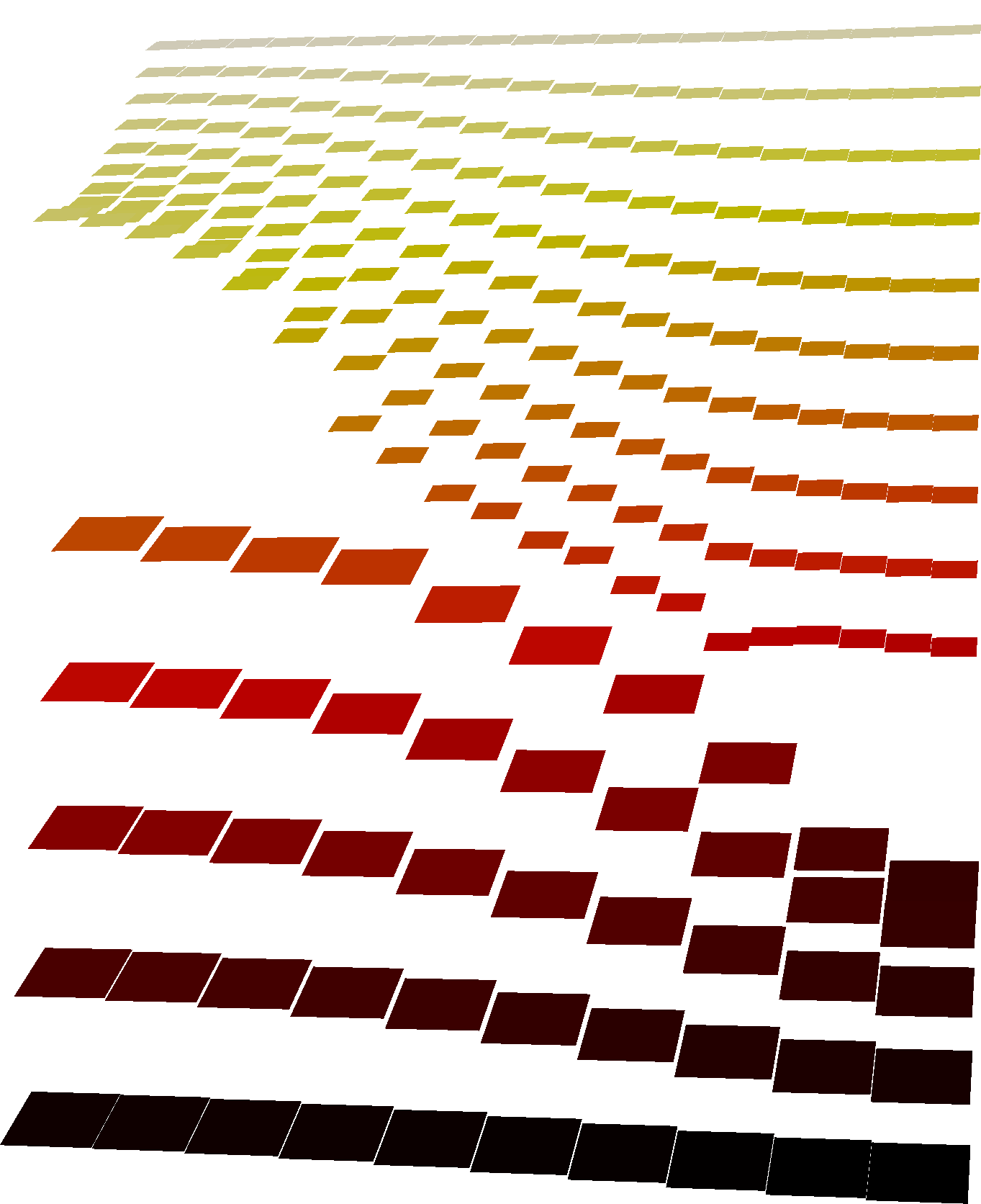}%
    }
    \subfloat[256 cells.]
    {
        \includegraphics[width=0.27\textwidth]{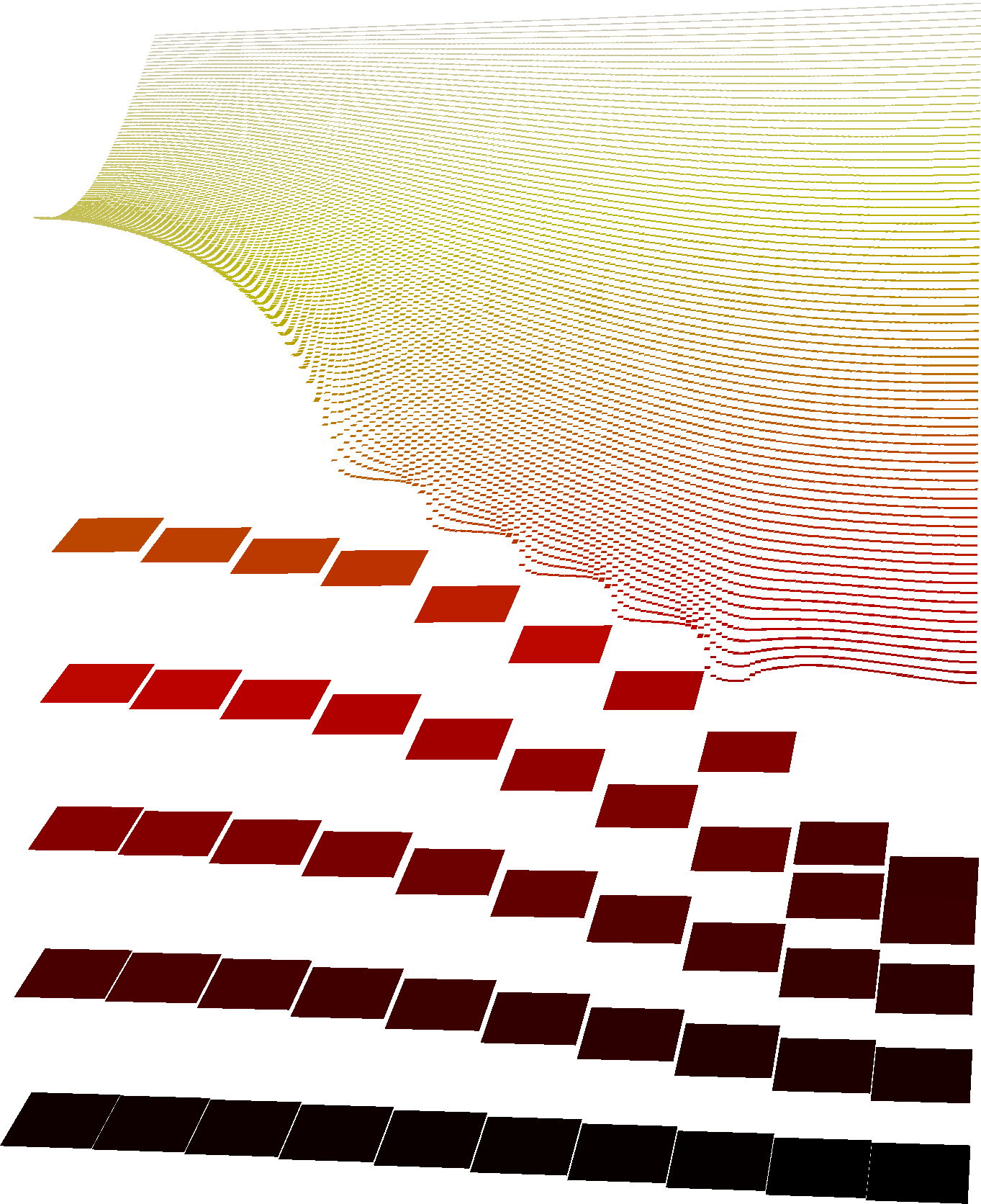}%
    }
    \subfloat[4096 cells.]
    {
        \includegraphics[width=0.27\textwidth]{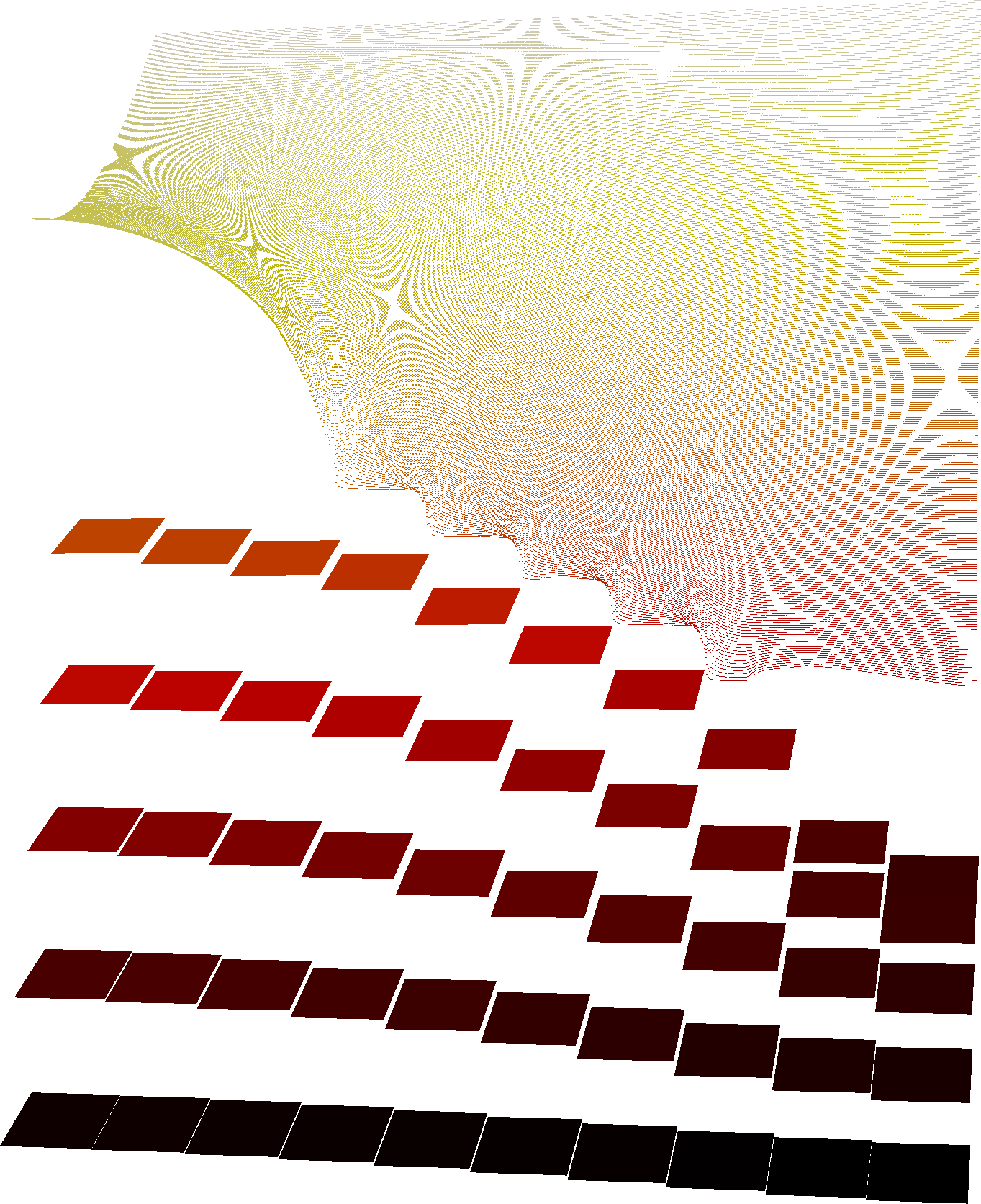}%
        \label{fig:reducedModel_test3_oscillations_part3}
    }
    \hspace{0.025\textwidth}%
    \includegraphics{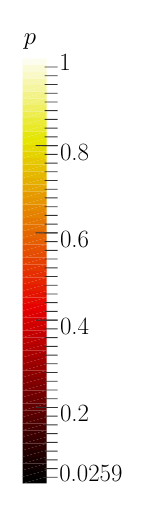}
    \caption{Pressure solutions for different meshes. Each legend depict the
        division of a coarse element to obtain the finer mesh.
        The family of meshes used are represented in Figure
        \ref{fig:reducedModel_test2_mesh}.}%
    \label{fig:reducedModel_test3_oscillations}%
\end{figure}
In Figure \ref{fig:reducedModel_test3_oscillations} are reported different
solutions for different meshes. The solutions keep in evidence the natural
dependence on the mesh, which becomes much significant when the discretization
of one side of the fault is much finer then the other side. In particular for
Figure \ref{fig:reducedModel_test3_oscillations_part3} the fine solution, in the
central part of the fault, is almost flat in correspondence of each element of
the coarse solution and exhibit a ``jump'' in correspondence of two different
coarse elements. Anyway the obtained solution is reasonable.
\begin{table}
    \centering
    \begin{tabular}{|lcccccccc|}
        \hline
            $h_\disc^{\rm max}/h_\disc^{\rm min}$& 1 & 2 & 4 & 8 & 16 & 32 & 64 & 128 \\
        \hline
            AMG & 8 & 10 & 10 & 11 & 11 & 12 & 12 & 12 \\
        \hline
            ILU4 & 12 & 16 & 23 & 48 & 110 & 253 & 678 & 1462 \\
        \hline
    \end{tabular}
    \caption{Number of iterations, for different mesh ratio $h_\disc^{\rm
        max}/h_\disc^{\rm min}$, to reach the convergence with to different
        preconditioners.}%
    \label{tab:iterations}
\end{table}
In Table \ref{tab:iterations} we present the number of iterations of a GMRES linear solver
to obtain the solution of the problem. We consider a stopping criteria on the
residual smaller then $10^{-12}$, running the code only with one processor. In
the table we consider two different preconditioner for the linear system: the
algebraic multi-grid (AMG), form the Hypre library \cite{Falgout2002}, and the
incomplete LU factorization with level of fill equal to 4, from the library
PETSc \cite{Balay2013}. The result are quite
promising for the AMG method since the number of iterations is almost constant,
while for ILU4 the number of iterations increases at each refinement.
Finally, even if an iteration of the ILU4 is cheaper in terms of CPU time than
an iteration of the AMG, the numbers of iterations are so different that,
from our experiments, we suggest to use the AMG method to solve also realistic
problems.


\subsection{Slipping domain} \label{subsec:slipping_domain}


We consider now an example where one part of the domain slides, thanks
to the fault, on the other part. The simulation is a sequence of problems in a
moving domain: in its left side we have a deposition of sedimentary material and
a movement from the top to the bottom of the sub-domain. The right part of the
domain remains in the same position.
\begin{figure}[!htp]
    \centering
    \subfloat[Initial domain $t=t^*$.]
    {
        \raisebox{0.1125\height}{\includegraphics[width=0.45\textwidth]{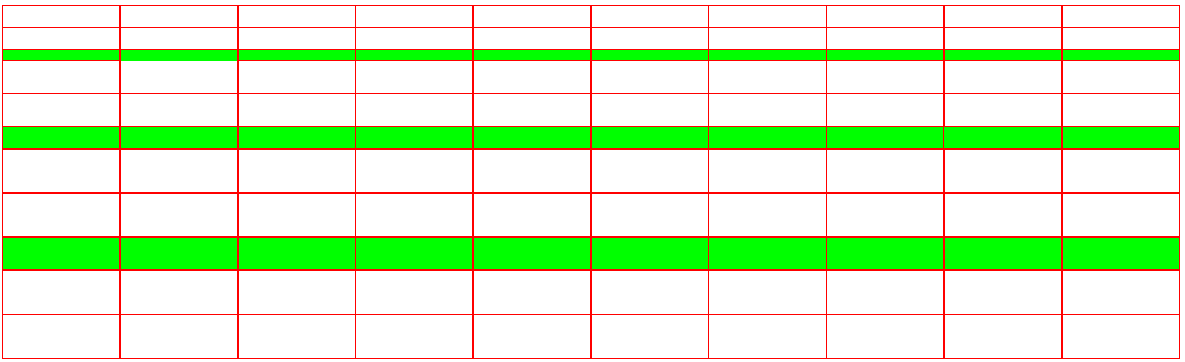}}%
        \label{fig:splip_domain_initial}
    }
    \subfloat[Final domain $t=T$.]
    {
        \includegraphics[width=0.45\textwidth]{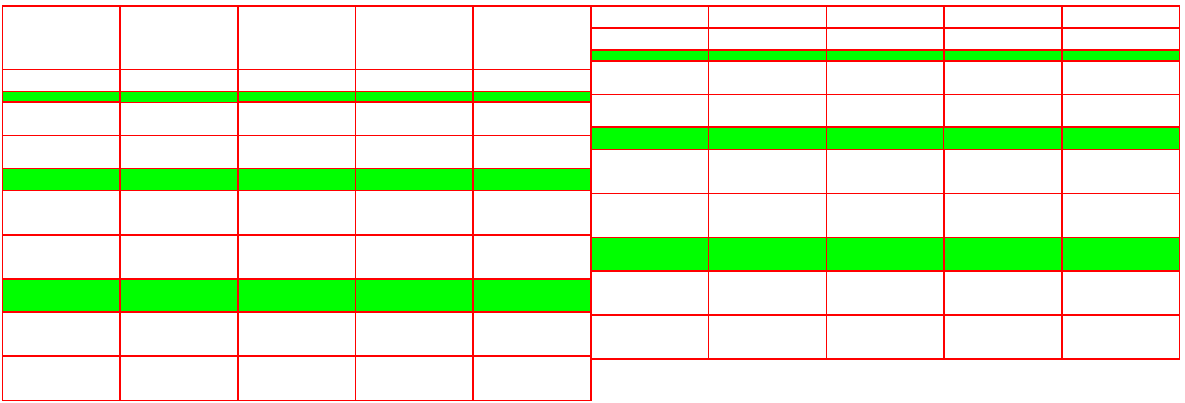}%
        \label{fig:splip_domain_final}
    }
    \caption{Representation of the meshes for two different configuration:
        the begin and the end of the simulation. The green cells are the
        discretization of $\Omega^{\rm barr}$.}%
    \label{fig:splip_domain}
\end{figure}
\begin{figure}[htbp]%
    \centering%
    \includegraphics[width=0.48\textwidth]{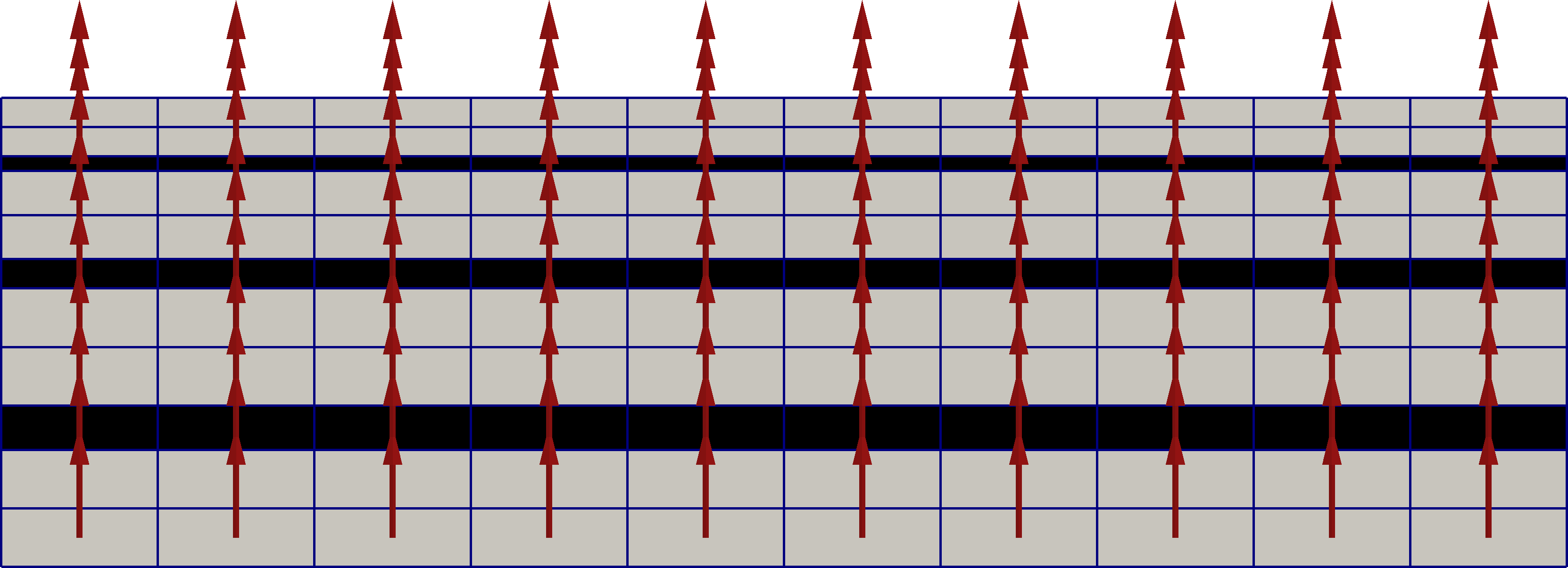}%
    \hspace{0.0025\textwidth}%
    \includegraphics[width=0.48\textwidth]{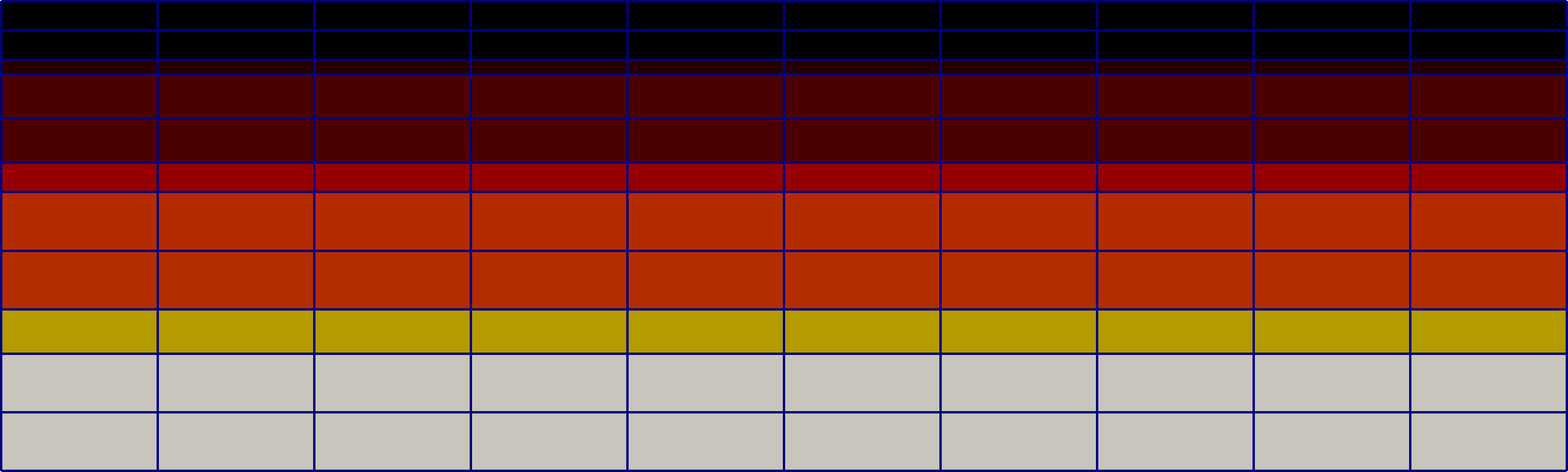}%
    \hspace{0.0025\textwidth}%
    \includegraphics{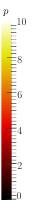}
    \caption{Representation of the initial solution for the pressure $p_0$ and
        the Darcy velocity.}%
    \label{fig:test4_0}%
\end{figure}
In Figure \ref{fig:splip_domain} we present the domain at two different times:
in the left at the beginning of the simulation and in the right at the end of
the simulation. In the former case we have $\Omega = (0,10) \times (-3, 0) Km^2$
and in the latter $\Omega \subset (0,10) \times (-3.35,0) Km^2$. The others
configurations of $\Omega$ moves from Figure \ref{fig:splip_domain_initial} to
Figure \ref{fig:splip_domain_final} linearly in time. The fault thickness is $d=50 m$.
The mathematical model is
the following: given $\partial \Omega^{\rm top}$ the top part of the boundary
condition and $t^*$ and $T$ the initial and final times, find $p$ such that
\begin{gather*}
    \begin{aligned}
        &c \Phi \dfrac{\partial p}{\partial t} - \nabla \cdot
        \dfrac{\Lambda}{\mu} \nabla p = 0 &\quad&
        \text{in } \Omega \times (t^*,T) \\
        &\Lambda \nabla p \cdot \n = 0 &&
        \text{on } \partial \Omega \setminus \partial
        \Omega^{\rm top} \times (t^*, T) \\
        &p = 0 && \text{on } \partial \Omega^{\rm top} \times (t^*, T) \\
        &p = p_0 && \text{in } \Omega \times \left\{ t^* \right\}
    \end{aligned},
\end{gather*}
where $\mu = 3.1 \cdot 10^4 Pa \cdot s$ is the dynamic viscosity.
Considering Figure \ref{fig:splip_domain}
we divide the domain $\Omega$ in the green part $\Omega^{\rm barr}$, which
behaves like a low permeable strata, and the remain part $\Omega \setminus\Omega^{\rm barr}$.
We impose as permeability and porosity and compressibility for the porous medium $\Lambda = \diag
\br{10^{-19}} m^2$ and $c\Phi = 0.1 \cdot 10^{-7} Pa^{-1}$ in $\Omega^{\rm barr}$ and $\Lambda = \diag
\br{10^{-15}} m^2$ and $c\Phi = 0.5 \cdot 10^{-7} Pa^{-1}$ in $\Omega \setminus \Omega^{\rm barr}$. The
initial and final times are: $t^* = -0.049 My$ and $T = 0.3 My$.  The initial
solution $p_0$ is computed, with domain in Figure
\ref{fig:splip_domain_initial}, thanks to the following problem
\begin{gather*}
    \begin{aligned}
        &- \nabla \cdot \dfrac{\Lambda}{\mu} \nabla p_0 = 0 &\quad& \text{in } \Omega \\
        &\Lambda \nabla p_0 \cdot \n = 0 && \text{on } \partial \Omega^{\rm left, right} \\
        &p_0 = 0 && \text{on } \partial \Omega^{\rm top} \\
        &p_0 = 10 && \text{on } \partial \Omega^{\rm bottom}
    \end{aligned},
\end{gather*}
with $\partial \Omega^{\rm left, right}$ is the left and right part of the $\partial
\Omega$ and $\Omega^{\rm bottom}$ the bottom part of the domain.  For the
computation of $p_0$ we consider the permeability in the fault cells equal to
the surrounding domain cell. The initial pressure is depicted in Figure
\ref{fig:test4_0}.
\begin{figure}[htbp] 
    \centering%
    \subfloat[Current time $t=0.02My$ and time step number $k=3$.]%
    {
        \includegraphics[width=0.48\textwidth]{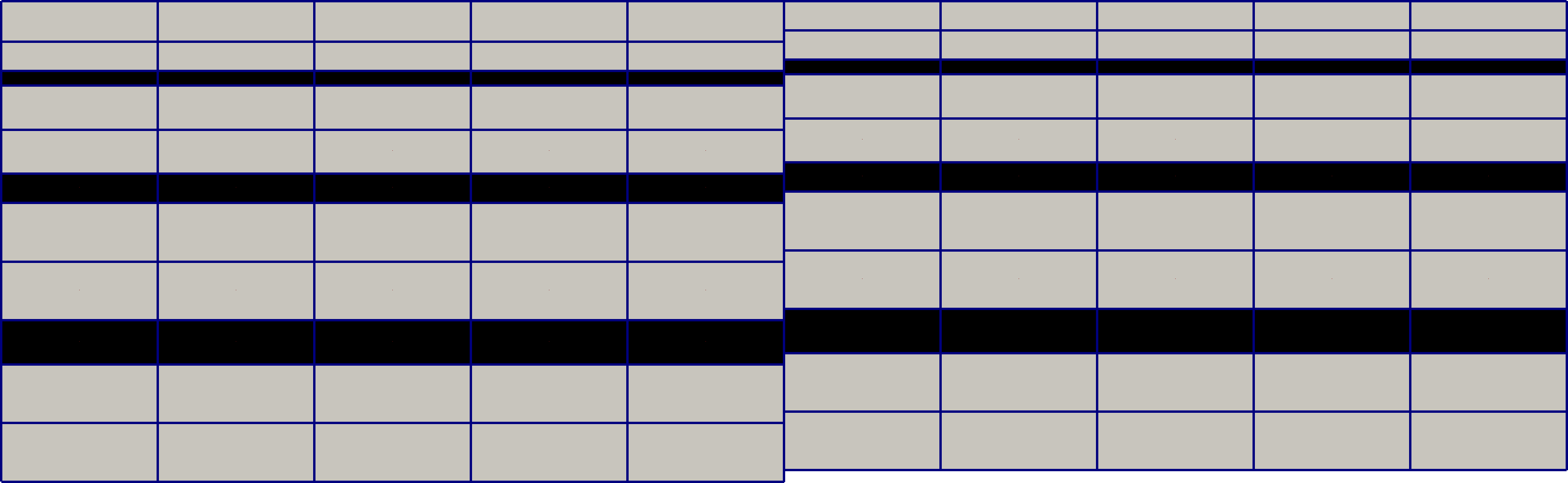}%
        \hspace{0.0025\textwidth}%
        \includegraphics[width=0.48\textwidth]{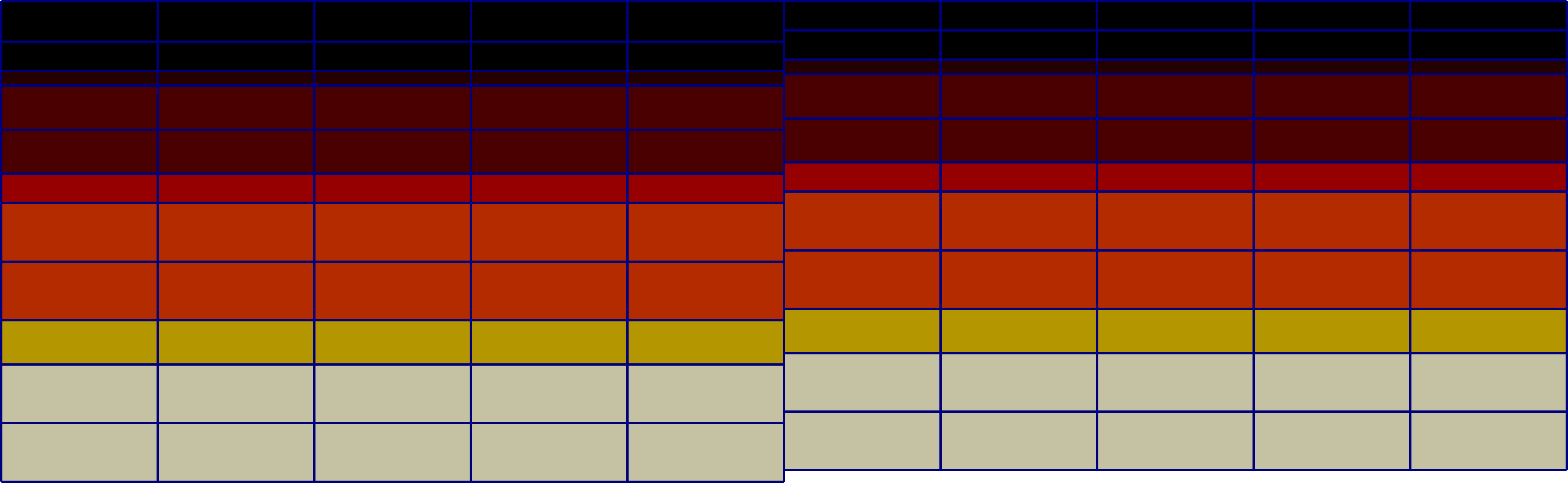}%
        \hspace{0.0015\textwidth}%
    \includegraphics[scale=0.8]{test4_legend_1}
    }
    \\
    \subfloat[Current time $t=0.083My$ and time step number $k=6$.]%
    {
        \includegraphics[width=0.48\textwidth]{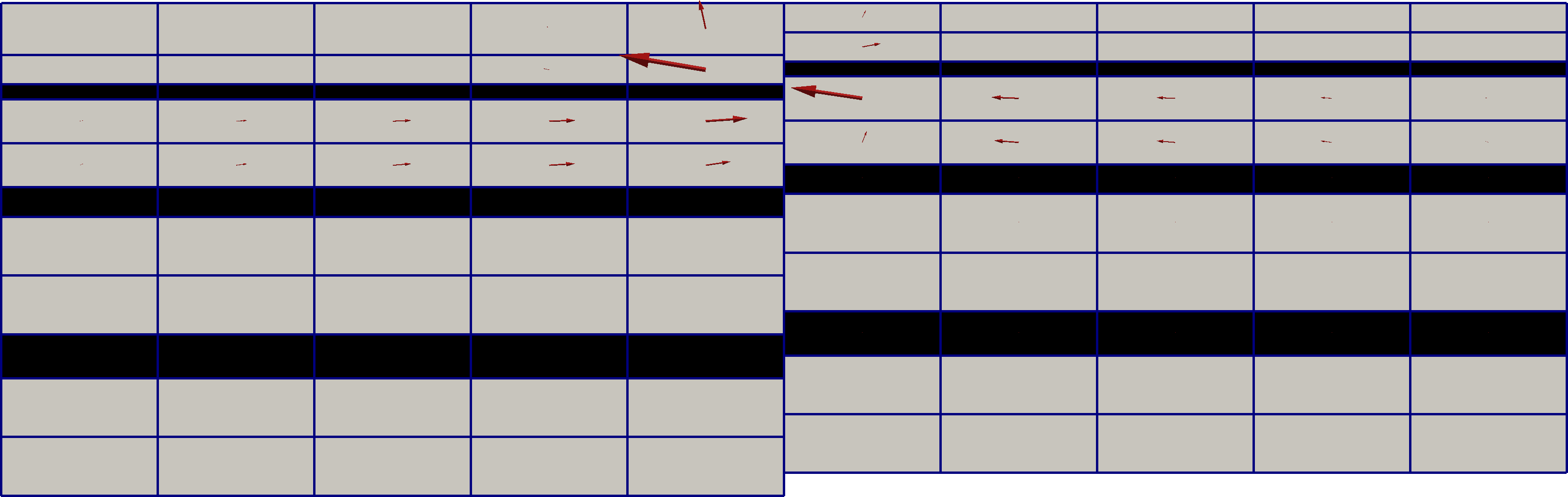}%
        \hspace{0.0025\textwidth}%
        \includegraphics[width=0.48\textwidth]{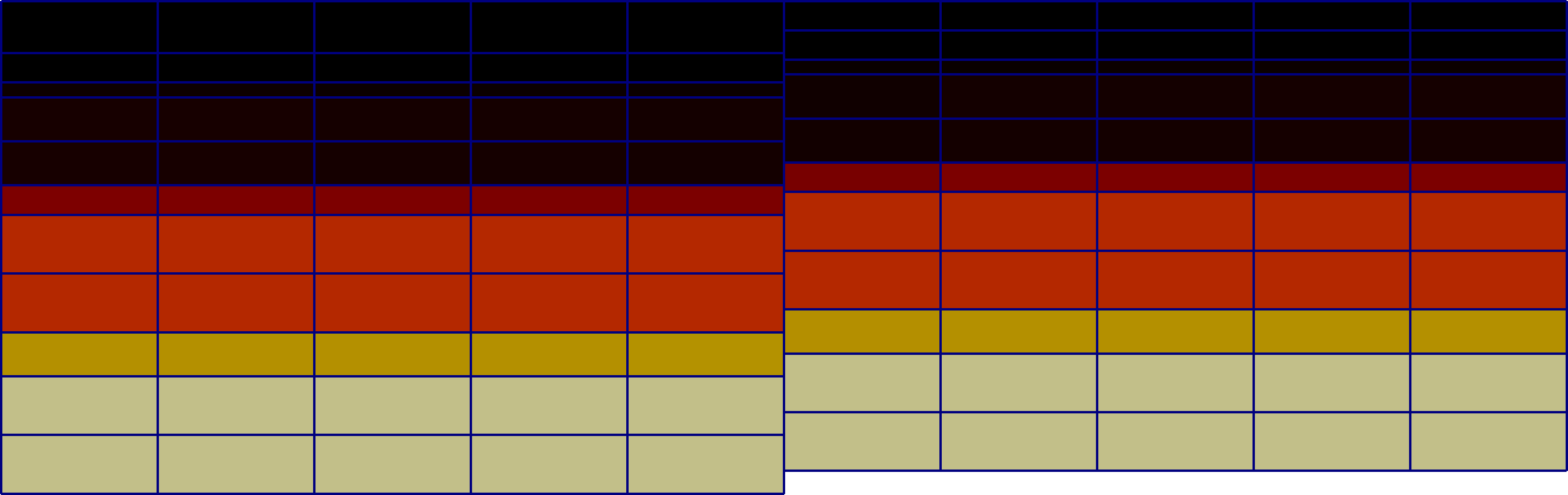}%
        \hspace{0.0015\textwidth}%
    \includegraphics[scale=0.8]{test4_legend_1}
    }
    \\
    \subfloat[Current time $t=0.125My$ and time step number $k=8$.]%
    {
        \includegraphics[width=0.48\textwidth]{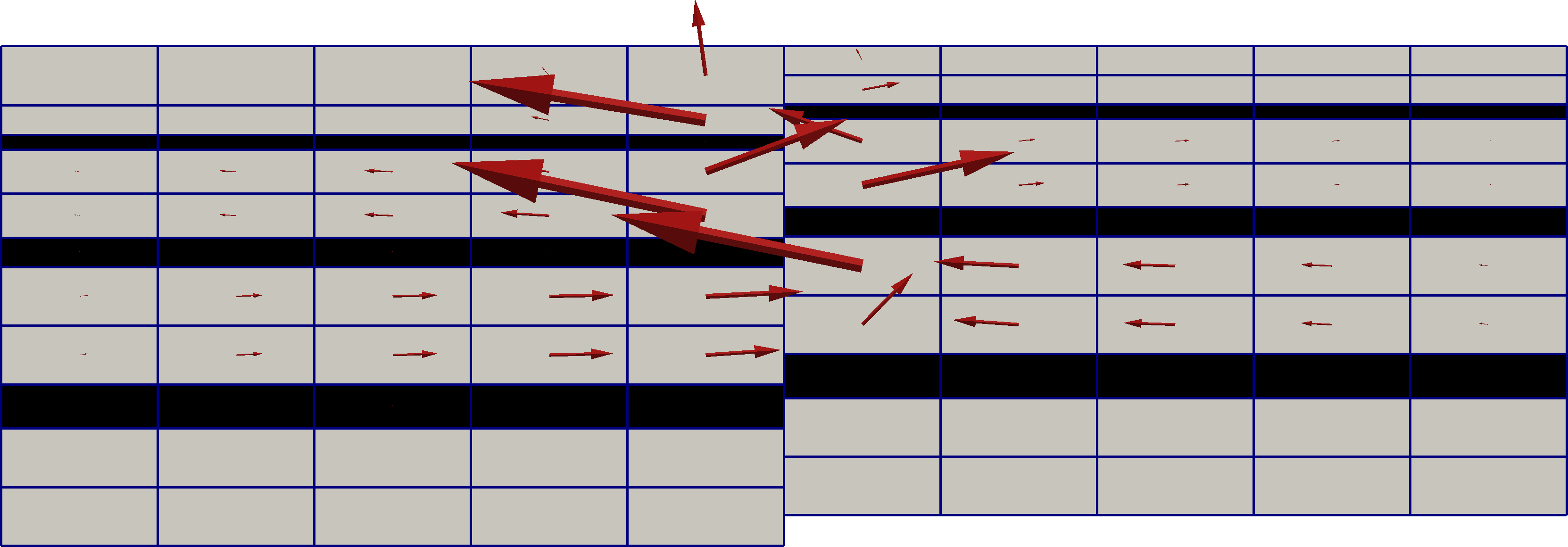}%
        \hspace{0.0025\textwidth}%
        \includegraphics[width=0.48\textwidth]{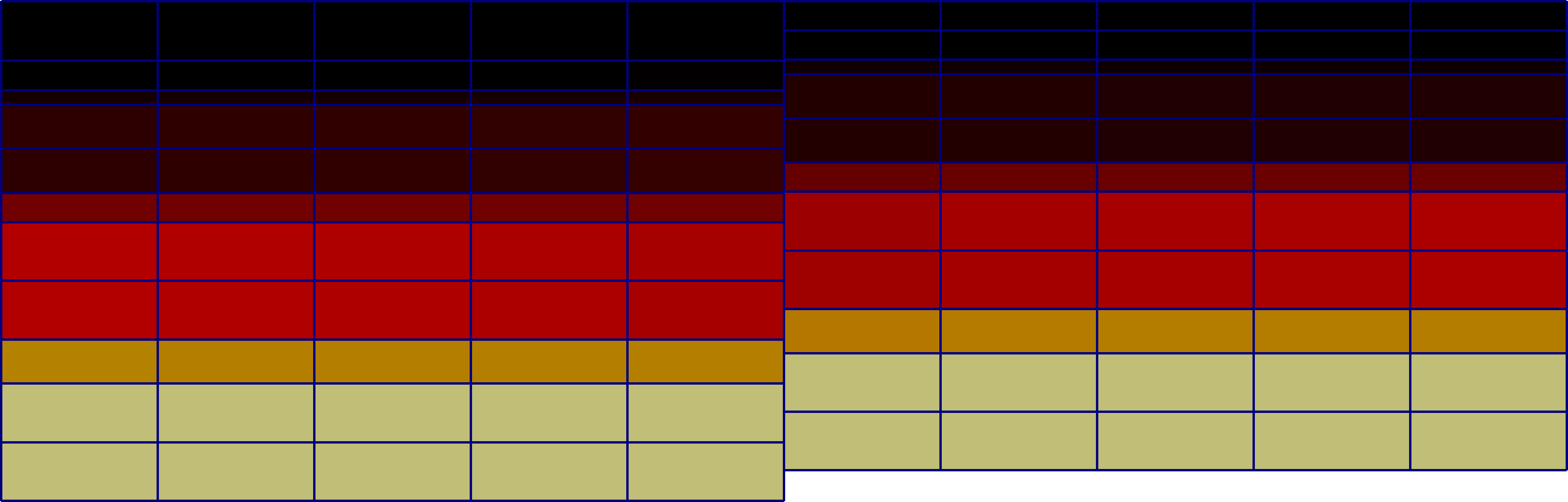}%
        \hspace{0.0015\textwidth}%
    \includegraphics[scale=0.8]{test4_legend_1}
    }
    \caption{Representation of different solution, pressure and Darcy velocity,
        for the neutral fault. The parameter $\chi=0.034$.}%
    \label{fig:test4_1}%
\end{figure}
We consider an implicit Euler scheme for the time discretization, no
interpolation operator is considered in the left part of the domain.  We
consider now three different test to validate the model. In each test we change
the value of the permeability inside the fault, while the porosity in the fault
is equal to the porosity of the surrounding porous medium. In all the images we
present both the pressure and the Darcy velocity, the latter using arrows with
size $\chi$-times its magnitude. We change the parameter $\chi$ to enhance the
readability.

As a first test, represented in Figure \ref{fig:test4_1} and \ref{fig:test4_1_b},
we present a sequence of solutions for
different time steps. For each cell in each layer of the fault we consider the
permeability equal to the permeability of the surrounding porous media. Then for
certain time steps the three layers open one after the other leading to a pressure drop.
\begin{figure}[htbp]%
    \centering%
    \subfloat[Current time $t=0.188My$ and time step number $k=11$.]%
    {
        \includegraphics[width=0.48\textwidth]{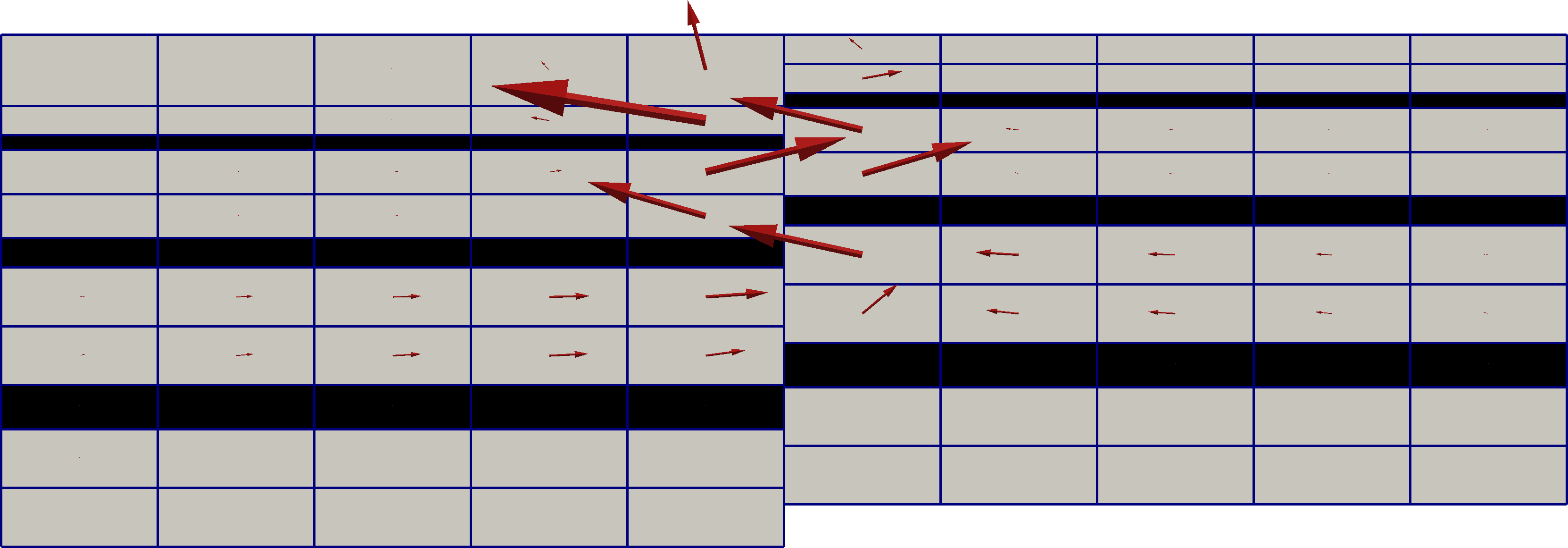}%
        \hspace{0.0025\textwidth}%
        \includegraphics[width=0.48\textwidth]{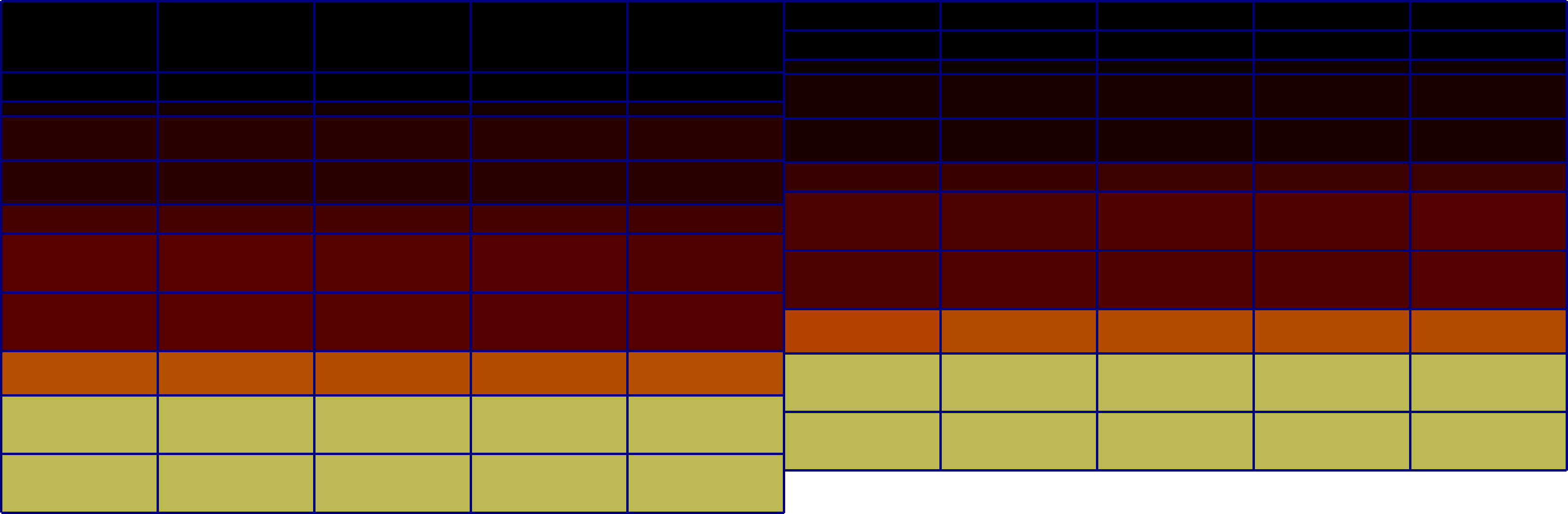}%
        \hspace{0.0015\textwidth}%
    \includegraphics[scale=0.8]{test4_legend_1}
    }
    \\
    \subfloat[Current time $t=0.209My$ and time step number $k=12$.]%
    {
        \includegraphics[width=0.48\textwidth]{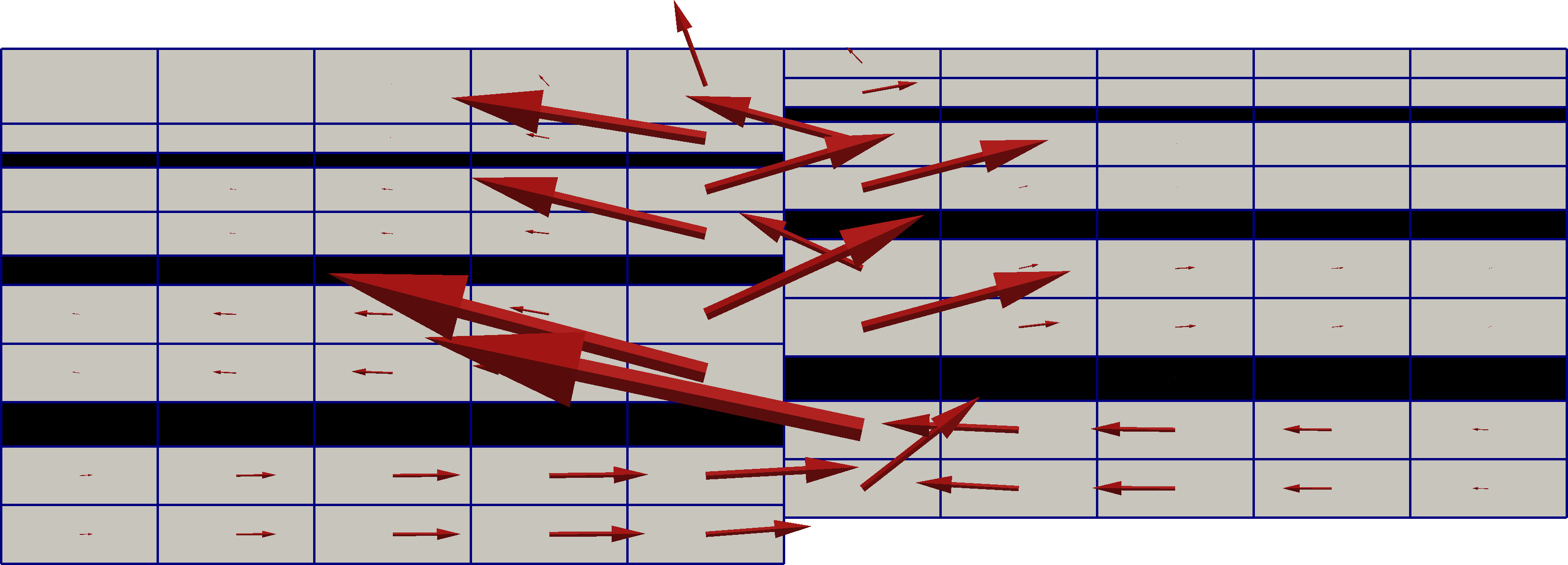}%
        \hspace{0.0025\textwidth}%
        \includegraphics[width=0.48\textwidth]{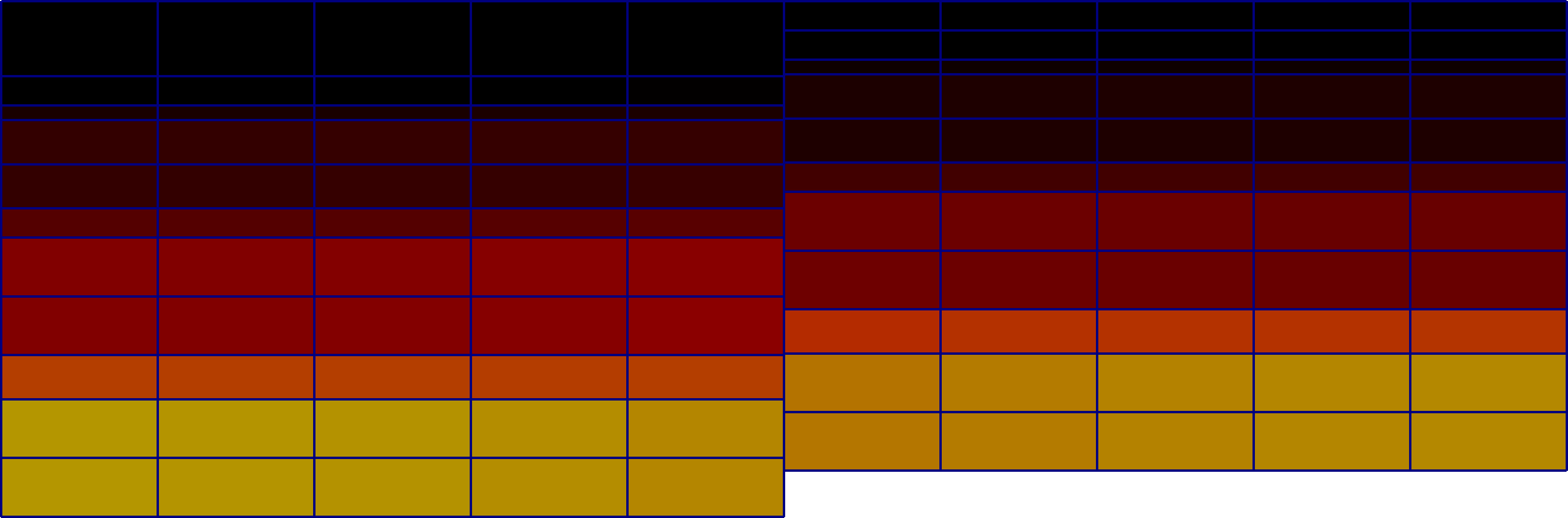}%
        \hspace{0.0015\textwidth}%
    \includegraphics[scale=0.8]{test4_legend_1}
        \label{fig:test4_1-33}
    }
    \\
    \subfloat[Current time $t=T$ and time step number $k=15$.]%
    {
        \includegraphics[width=0.48\textwidth]{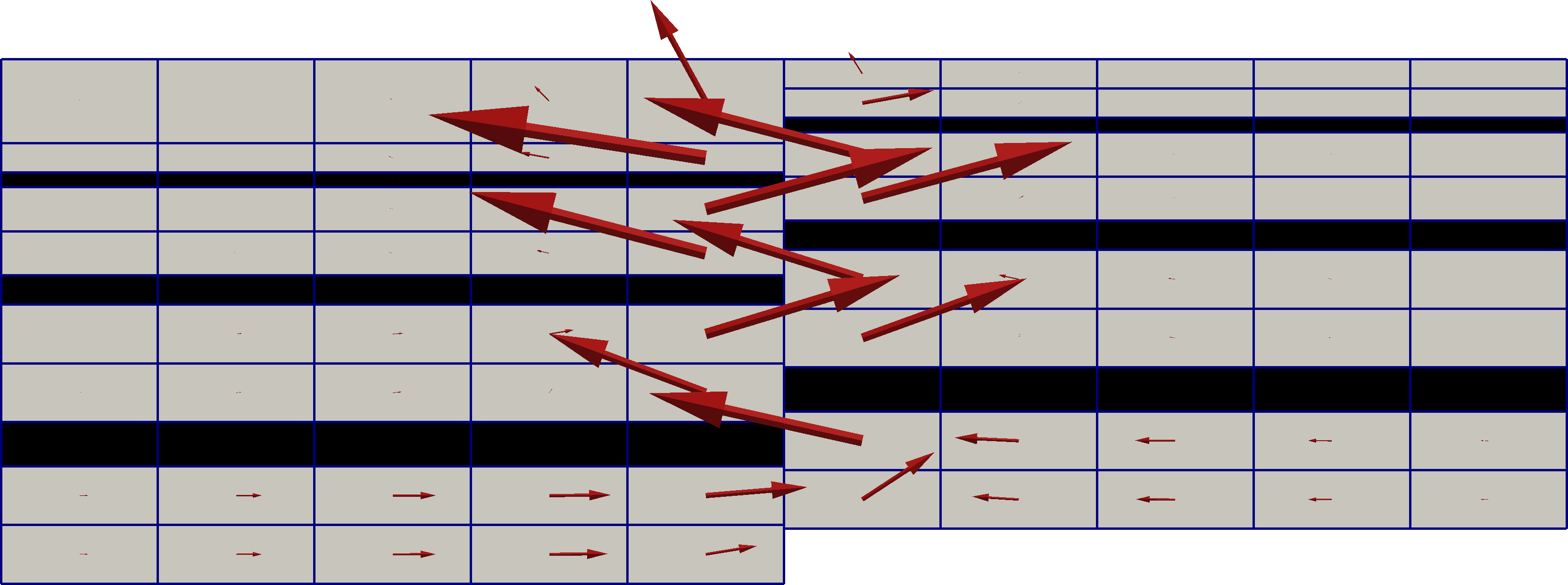}%
        \hspace{0.0025\textwidth}%
        \includegraphics[width=0.48\textwidth]{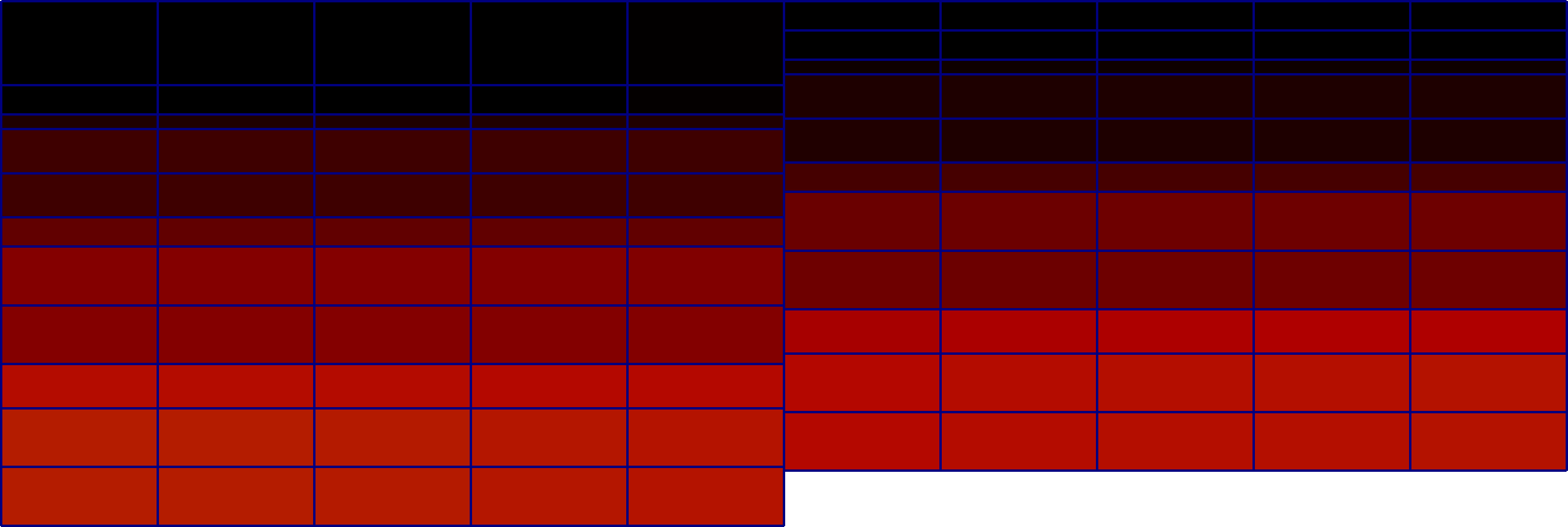}%
        \hspace{0.0015\textwidth}%
    \includegraphics[scale=0.8]{test4_legend_1}
    }
    \caption[]{Representation of different solution, pressure and Darcy velocity,
        for the neutral fault. The parameter $\chi=0.034$.}%
    \label{fig:test4_1_b}
\end{figure}
We notice that the Darcy velocity is very small before the opening of a low
permeable strata, while increases after the opening. Then once one of this
strata is opened, for example in Figure \ref{fig:test4_1-33}, the flow starts to
enter in the upper layers while the flow in the others layers tends to spread
far from the centre of the domain. Moreover we have a pressure decrease,
especially close to the fault, for each time step.

In Figure \ref{fig:test4_2} we consider a second test case where the
permeability in the fault is set to $\Lambda_f = \diag \br{10^{-13}} m^2$, so
the fault behaves like a channel for the flow. To limit the evolution of the
pressure we impose the porosity and compressibility as $c\Phi=10^{-6} Pa^{-1}$
in $\Omega^{\rm barr}$ and $c\Phi=0.2 \cdot 10^{-6} Pa^{-1}$ elsewhere.
Considering Figure \ref{fig:test4_2-22} we see a pressure drop of the cells
close to the fault, which is bigger in the bottom part of the domain where the
pressure is higher. All the arrows of the Darcy velocity are almost parallel to
the abscissa and pointing to the fault.
\begin{figure}[htbp]%
    \centering%
    \subfloat[Current time $t=0My$ and time step number $k=1$.
        The arrows are five times smaller then in the other
        representations.]%
    {
        \includegraphics[width=0.48\textwidth]{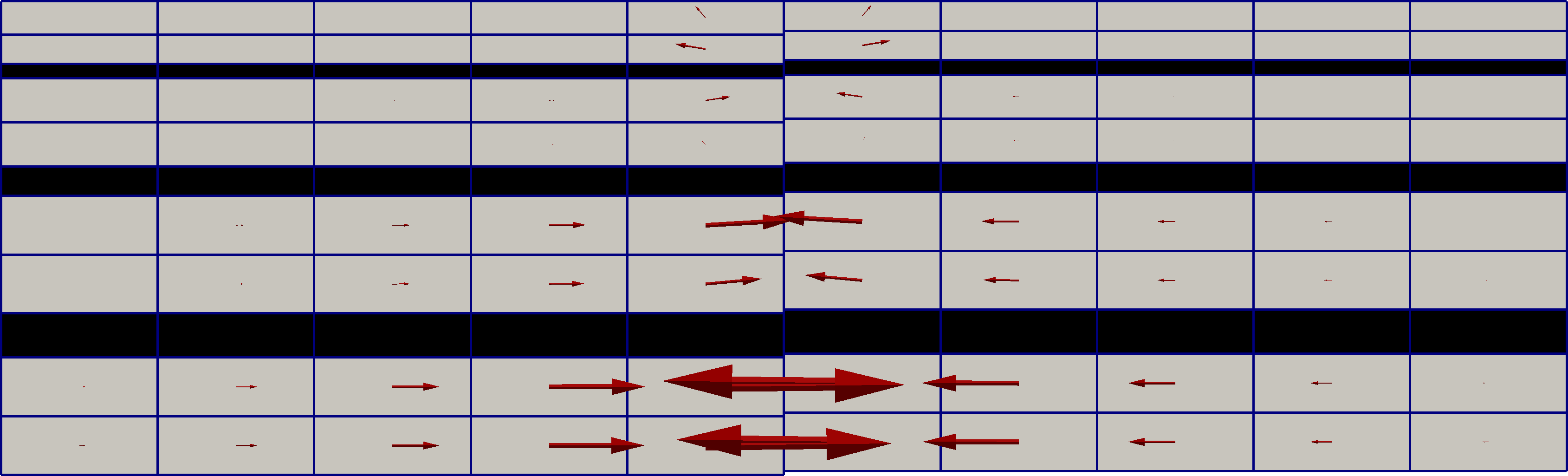}%
        \hspace{0.0025\textwidth}%
        \includegraphics[width=0.48\textwidth]{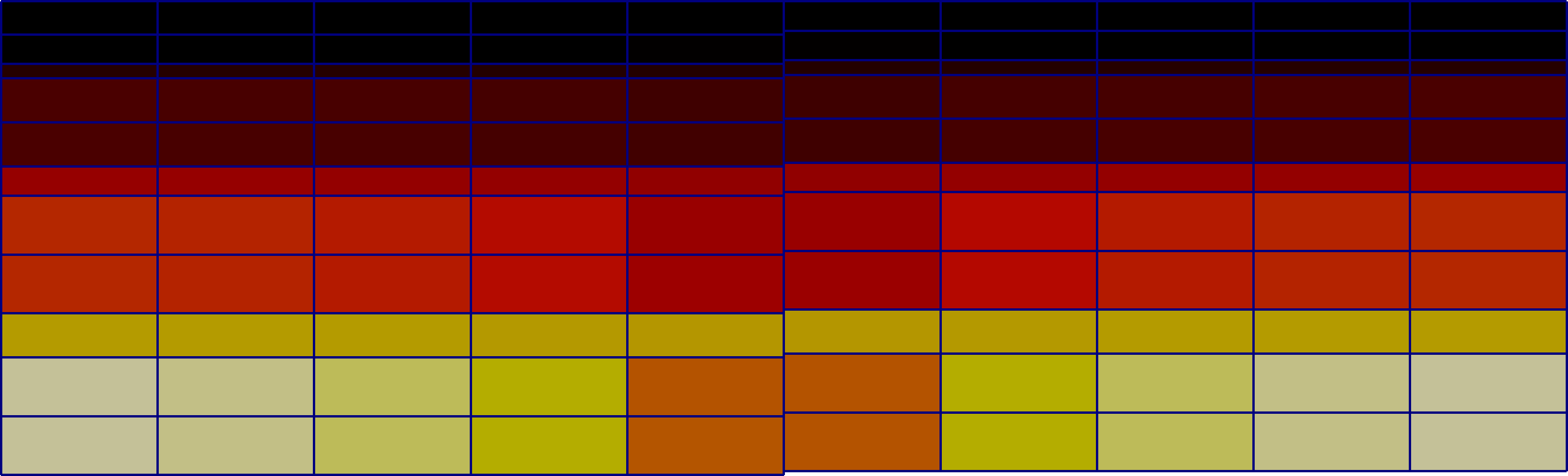}%
        \hspace{0.0015\textwidth}%
    \includegraphics[scale=0.8]{test4_legend_1}
        \label{fig:test4_2-22}
    }
    \\
    \subfloat[Current time $t=0.146My$ and time step number $k=9$.]%
    {
        \includegraphics[width=0.48\textwidth]{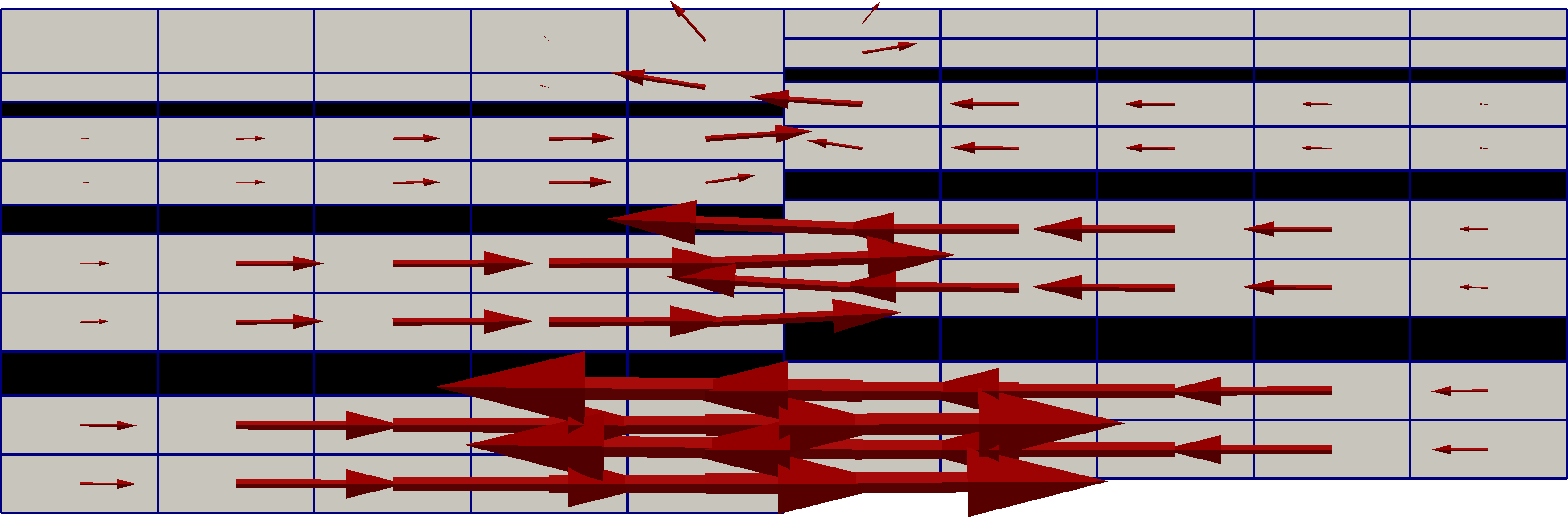}%
        \hspace{0.0025\textwidth}%
        \includegraphics[width=0.48\textwidth]{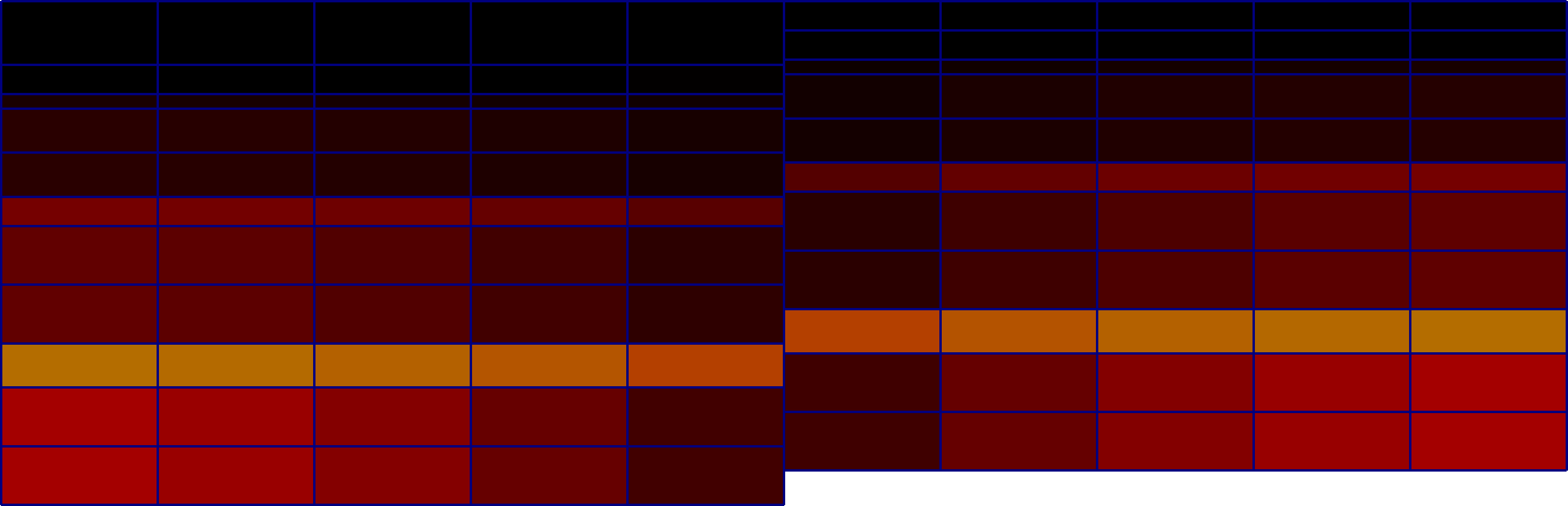}%
        \hspace{0.0015\textwidth}%
    \includegraphics[scale=0.8]{test4_legend_1}
    }
    \\
    \subfloat[Current time $t=T$ and time step number $k=16$.]%
    {
        \includegraphics[width=0.48\textwidth]{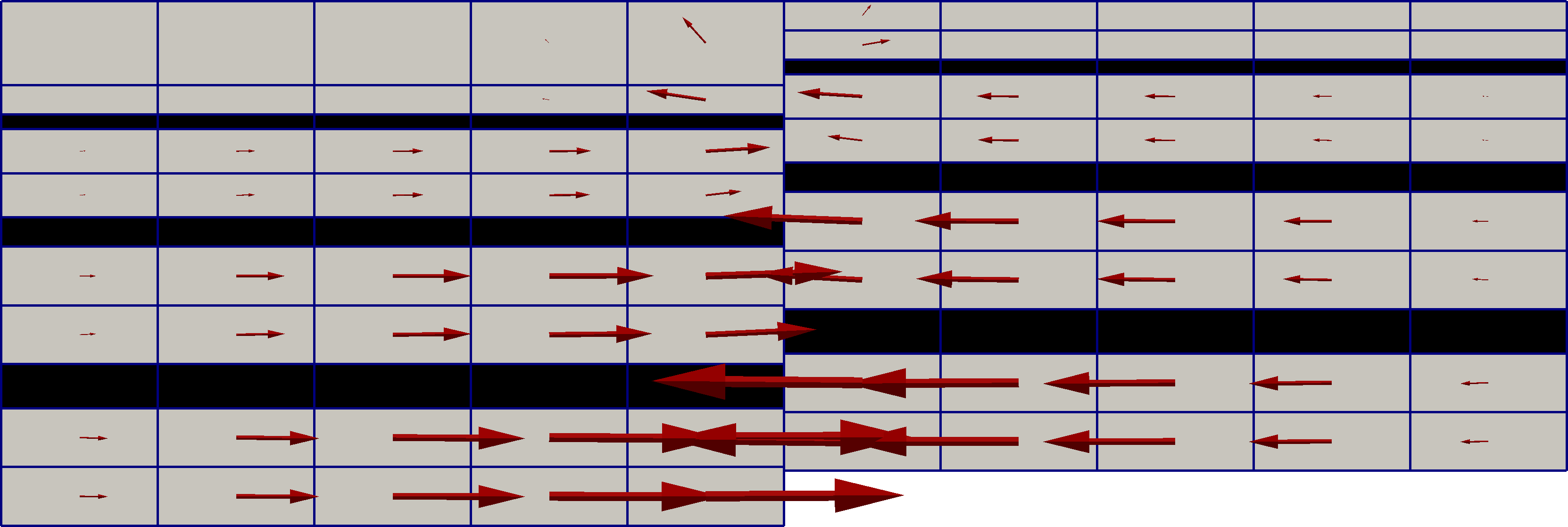}%
        \hspace{0.0025\textwidth}%
        \includegraphics[width=0.48\textwidth]{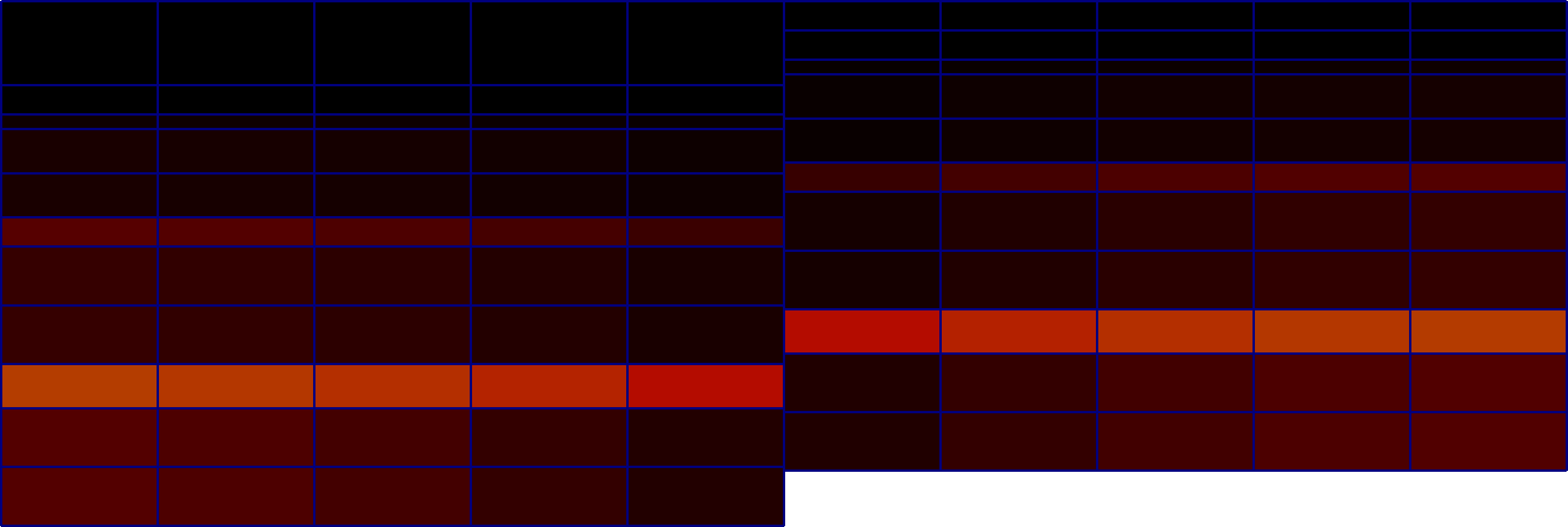}%
        \hspace{0.0015\textwidth}%
    \includegraphics[scale=0.8]{test4_legend_1}
    }
    \caption{Representation of different solution, pressure and Darcy velocity,
        for the conductive fault. The parameter $\chi = 0.02$.}%
    \label{fig:test4_2}%
\end{figure}
In the second and third images of Figure \ref{fig:test4_2} we have the same
phenomena but, since in the last time step the pressure is lower, the Darcy
velocity is higher for $k=9$ than for $k=15$. We see that for the pressure
inside the green cells this behaviour is less evident.

The last test, depicted in Figure \ref{fig:test4_3}, represents an almost
impermeable fault with permeability $\Lambda_f = \diag \br{10^{-17}} m^2$.
The fault is more permeable than $\Omega^{\rm barr}$ but less permeable than
$\Omega \setminus \Omega^{\rm barr}$.
\begin{figure}[htbp]
    \centering%
    \subfloat[Current time $t=0.167My$ and time step number $k=10$.]%
    {
        \includegraphics[width=0.48\textwidth]{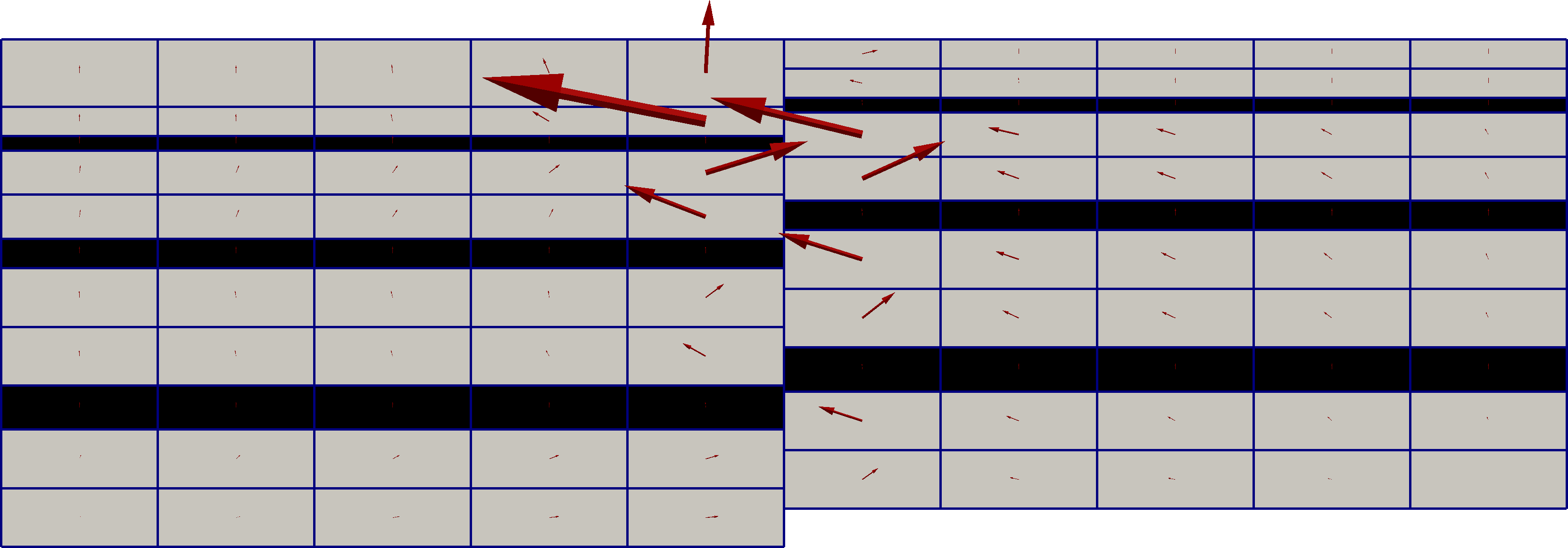}%
        \hspace{0.0025\textwidth}%
        \includegraphics[width=0.48\textwidth]{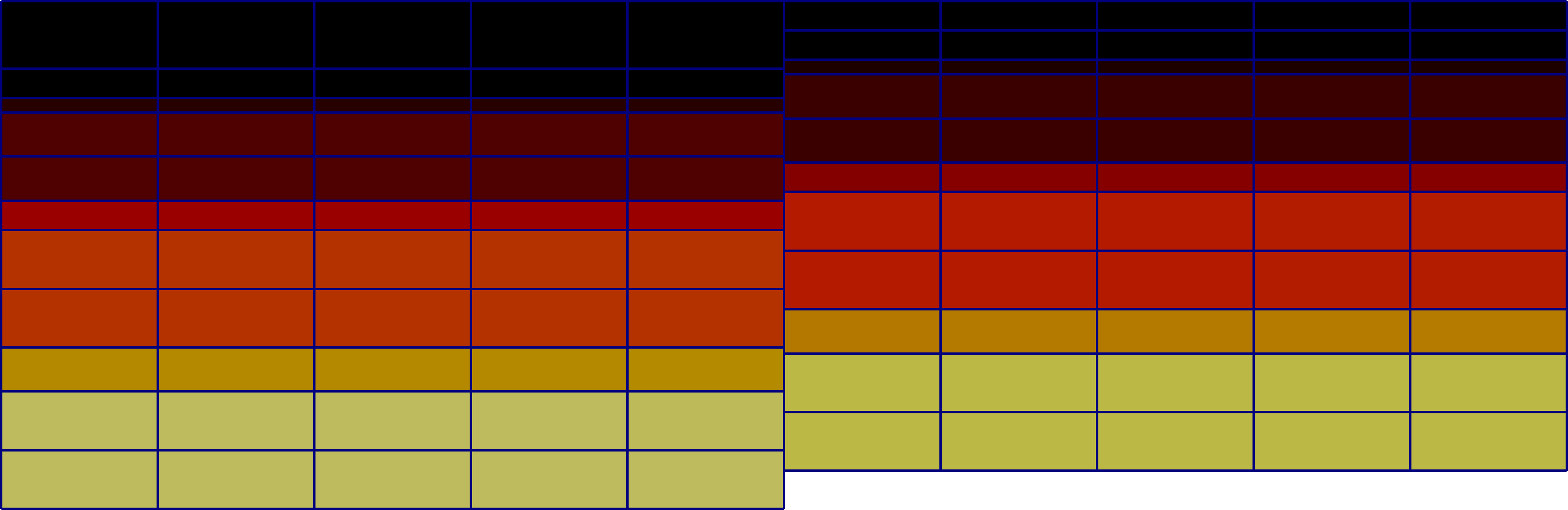}%
        \hspace{0.0015\textwidth}%
    \includegraphics[scale=0.8]{test4_legend_1}
    }
    \\
    \subfloat[Current time $t=T$ and time step number $k=15$.]%
    {
        \includegraphics[width=0.48\textwidth]{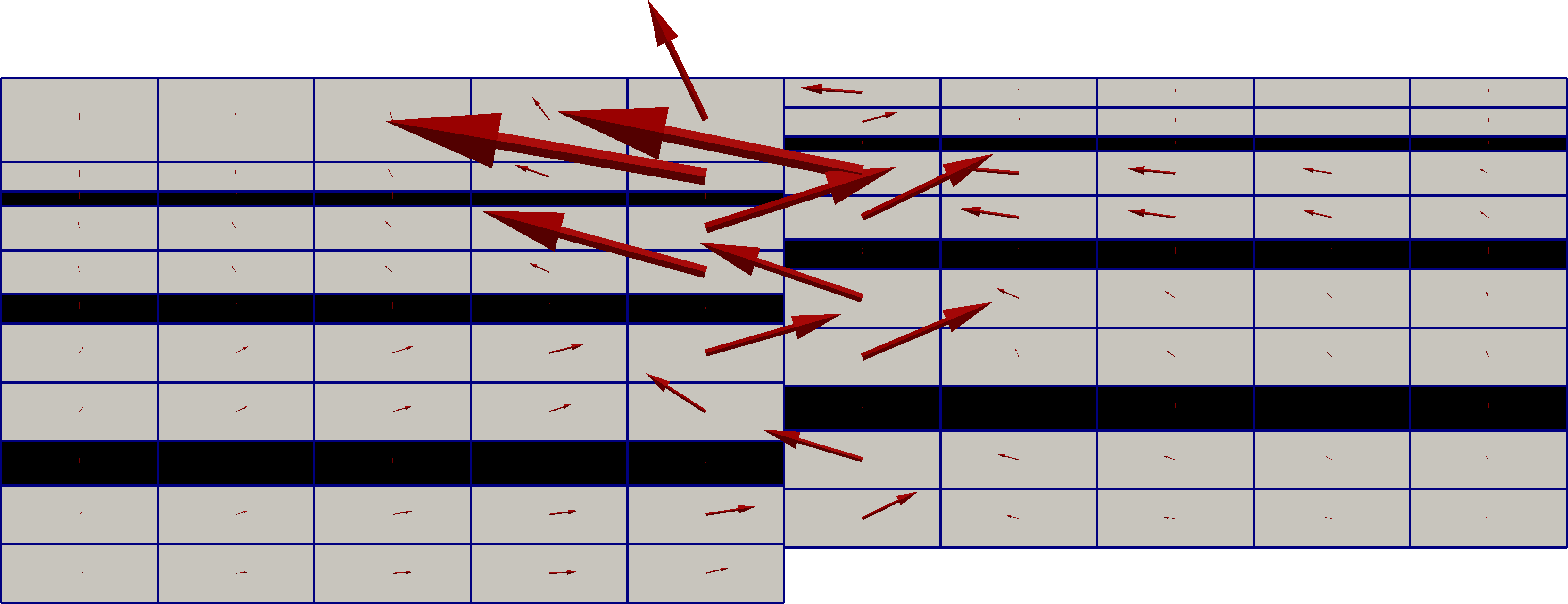}%
        \hspace{0.0025\textwidth}%
        \includegraphics[width=0.48\textwidth]{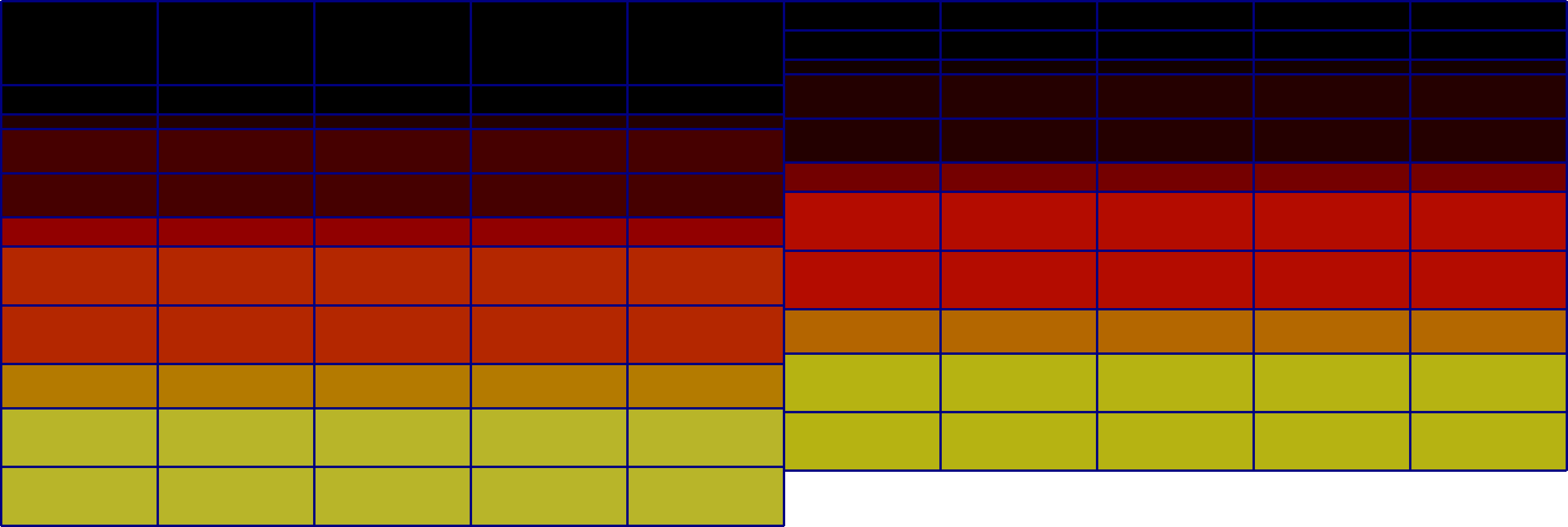}%
        \hspace{0.0015\textwidth}%
    \includegraphics[scale=0.8]{test4_legend_1}
    }
    \caption{Representation of different solution, pressure and Darcy velocity,
        for the almost impermeable fault. The parameter $\chi=0.21$.}%
    \label{fig:test4_3}%
\end{figure}
We have a very slow movement of the pressure during the simulation due to the
nature of the fault. Since the permeability of the fault is in between the
permeabilities of the porous media, once one layer of $\Omega^{\rm barr}$ is
opened the fluid starts to flow up. Contrary to the first test case considered
the end pressure is higher and the factor $\chi$ is six times higher bigger.

\begin{remark}[Maximum principle]
    We have tried to decrease the fault thickness until $d=10^{-1}m$. In this
    case we have noticed that, for the last time step, the maximum principle is
    evidently violated. The maximum of the pressure, which is reached in the
    bottom cells, is a little bigger then $10MPa$. Even if the value of the
    thickness is unphysical for our applications, this behaviour is a limitation of the proposed
    scheme. A possible explanation is the presence of the small cells
    with non-matching neighbours cells.
\end{remark}




\section{Conclusion} \label{sec:conclusion}


In this work we have derived and analysed a RM for single-phase flows
in presence of faults, which can act as low permeable strata or channels. We
consider faults that cut entirely the domain dividing the latter in disjoint
parts. The proposed model allows to handle a domain in which one part can slip,
along the fault past to the other. To easily handle the aforementioned properties
we consider a mesh for each part of the domain independent from the meshes of
other parts.  The derivation of the mathematical model is similar to
\cite{Martin2005,Tunc2012}, yet here we have used a different discretization
scheme: the hybrid finite volume scheme \cite{Eymard2010}, where one of the
advantages is the presence of degrees of freedom on faces which help the
approximation of the interface terms.  Well posedness analyses has been given
for the continuous problem as well as a convergence result for the discrete
solution to the exact one. We have also shown several numerical experiments to
estimate the convergence rates of the errors for both the porous medium and the
fault. The examples highlight also the capability of the proposed method to
handle different data configurations as well as the robustness with respect to
the mesh size ratio between different parts of the domain.



\section{Acknowledgements}


The authors warmly thank
J\'er\^ome Jaffr\'e
and
Jean E. Roberts
for many fruitful
discussions.




\end{document}